\documentclass[11pt]{amsart}
\usepackage{amssymb,amsmath,amsthm,amsfonts,mathrsfs}
\usepackage[hmargin=3cm,vmargin=3.5cm]{geometry}
\usepackage[dvipsnames,table,xcdraw]{xcolor}
\usepackage[colorlinks = true,
            linkcolor = ForestGreen,
            urlcolor  = Fuchsia,
            citecolor = blue,
            anchorcolor = blue]{hyperref}

\usepackage{stackengine}

\usepackage[all,2cell]{xy} \UseAllTwocells 
\usepackage{tikz-cd}
\usepackage{quiver} 

\usepackage{color}
\usepackage{graphicx,psfrag}
\usepackage{amscd}  
\usepackage[all,2cell]{xy} \UseAllTwocells \SilentMatrices
\usepackage{tikz}
\usepackage[dvips]{epsfig}
\usepackage{comment}
\usepackage[shortlabels]{enumitem}
\usepackage{kbordermatrix}
\usepackage{cancel,scalefnt}
\usepackage{bm}
 
\usepackage{array}

\newcolumntype{L}{>{$}l<{$}} 

\theoremstyle{definition}
\newtheorem{theorem}{Theorem}[section]
\newtheorem{lemma}[theorem]{Lemma}
\newtheorem{remark}[theorem]{Remark}

\newtheorem{definition}[theorem]{Definition}

\newtheorem{proposition}[theorem]{Proposition}

\newtheorem{corollary}[theorem]{Corollary}
\newtheorem{example}[theorem]{Example}

\catcode`~=11 
\newcommand{\urltilde}{\kern -.15em\lower .7ex\hbox{~}\kern .04em}  
\catcode`~=13 

\usetikzlibrary{arrows,automata}
\usetikzlibrary{decorations.markings}
\usetikzlibrary{decorations.pathmorphing,shapes,decorations.text,shapes.geometric}
\usetikzlibrary{shadings}
\usepackage{multirow}

\newcounter{sarrow}

\usepackage[colorinlistoftodos]{todonotes}

\definecolor{darkred}{rgb}{0.7,0,0} 
\newcommand{\defn}[1]{{\color{darkred}\emph{#1}}} 

\title[Multisymmetric polynomials on set-theoretic quiver representations]{Multisymmetric polynomials on set-theoretic quiver representations}

\author[Radford Green]{Radford Green}
\address{Department of Mathematics, Johns Hopkins University, Baltimore, MD 21218, USA}
\email{\href{mailto:rgreen87@jh.edu}{rgreen87@jh.edu}}

\author[Cornell Holmes]{Cornell Holmes}
\address{Department of Mathematics, Johns Hopkins University, Baltimore, MD 21218, USA}
\email{\href{mailto:cholme21@jhu.edu}{cholme21@jhu.edu}}

\author[Mee Seong Im]{Mee Seong Im}
\address{Department of Mathematics, Johns Hopkins University, Baltimore, MD 21218, USA}
\email{\href{mailto:meeseong@jhu.edu}{meeseong@jhu.edu}}

\makeatletter
\@namedef{subjclassname@2020}{%
  \textup{2020} Mathematics Subject Classification}

\subjclass[2020]{Primary: 05E05, 05C31, 16G20, 05C20, 05C62;
Secondary: 16G20, 22E27.}

\providecommand{\keywords}[1]{\textbf{\textit{Key words and phrases.}} #1}

\keywords{Multisymmetric polynomials, set-theoretic quiver representations, Jordan quivers, cyclic quivers, eventually constant functions.}

\date{\today}
\begin{document}

\def\mfb{\mathfrak{b}}

\def\A{\mathbb{A}}

\def\E{\mathsf E}
\def\F{\mathbb{F}}
\def\In{\mathsf{in}}
\def\I{\mathsf I}
\def\R{\mathbb R}
\def\Q{\mathbb Q}
\def\Z{\mathbb Z}
\def\map{\mathsf{map}}
\def\N{\mathbb N}
\def\P{\mathsf{P}}
\def\C{\mathbb C}
\def\S{\mathbb S}
\def\Kr{\mathsf{Kr}}
\def\sgn{\mathsf{sgn}}
\def\t{\mathsf{t}}
\def\target{\mathsf{target}}
\def\DAG{\mathsf{DAG}}
\def\Set{\mathsf{Set}}
\def\Frac{\mathsf{Frac}}
\def\FinSet{\mathsf{FinSet}}
\def\Sym{\mathsf{Sym}}
\def\Leinster{\mathsf{Leinster}}
\def\Lin{\mathsf{Lin}}
\def\Nil{\mathsf{Nil}}
\def\SS{\mathbb S}
\def\GL{\mathsf{GL}}
\def\Graph{\mathsf{Graph}}
\def\top{\mathsf{top}}

\def\for{\mathsf{for}}
\def\endo{\mathsf{end}}
\def\hom{\mathsf{hom}}
\def\Hom{\mathsf{Hom}}
\def\End{\mathsf{End}}
\newcommand{\card}{\mathsf{card}}
\newcommand{\Rep}{\mathsf{SetRep}}

\def\Fib{\mathsf{Fib}}
\def\Nss{\mathsf{Nss}}
\def\Src{\mathsf{Src}}
\def\Snk{\mathsf{Snk}}
\def\Succ{\mathsf{Succ}}

\def\Der{\mathsf{Der}}
\def\Pol{\mathsf{Pol}}

\newcommand{\dmod}{\mathsf{-mod}}
\newcommand{\comp}{\mathrm{comp}} 
\newcommand{\col}{\mathrm{col}}
\newcommand{\adm}{\mathrm{adm}}  
\newcommand{\Ob}{\mathrm{Ob}}
\newcommand{\Cob}{\mathsf{Cob}}
\newcommand{\UCob}{\mathsf{UCob}}
\newcommand{\COB}{\mathcal{COB}}
\newcommand{\ECob}{\mathsf{ECob}}
\newcommand{\id}{\mathsf{id}}
\newcommand{\undM}{\underline{M}}
\newcommand{\im}{\mathsf{im}\:}
\newcommand{\coker}{\mathsf{coker}}
\newcommand{\Aut}{\mathsf{Aut}}
\newcommand{\tripod}{\mathsf{Td}}
\newcommand{\BBC}{\mathbb{B}(\mathcal{C})}
\newcommand{\Pmod}{\mathrm{pmod}}
\newcommand{\gammaoneR}{\gamma_{1,R}}  
\newcommand{\gammaoneRbar}{\overline{\gamma}_{1,R}} 
\newcommand{\gammaoneRprime}
{\gamma'_{1,R}}
\newcommand{\gammaoneRbarprime}
{\overline{\gamma}'_{1,R}}
\newcommand{\qbinom}[3]{\genfrac{[}{]}{0pt}{}{#1}{#2}_{#3}}
\newcommand{\thick}[1]{\pmb{\mathbf{#1}}}

\def\l{\lbrace}
\def\r{\rbrace}
\def\o{\otimes}
\def\out{\mathsf{out}}
\def\lra{\longrightarrow}
\def\ed{\mathsf{ed}}
\def\Ext{\mathsf{Ext}}
\def\ker{\mathsf{ker}}
\def\mf{\mathfrak} 
\def\mcC{\mathcal{C}}
\def\mcS{\mathcal{S}}  
\def\mcQC{\mathcal{QC}}
\def\mcA{\mathcal{A}}
\def\mcF{\mathcal{F}}
\def\mcE{\mathcal{E}}
\def\Fr{\mathsf{Fr}}  

\def\bbn{\mathbb{B}^n}
\def\ovb{\overline{b}}
\def\tr{{\sf tr}} 
\def\det{{\sf det }} 
\def\one{\mathbf{1}}   
\def\kk{\mathbf{k}}  
\def\gdim{\mathsf{gdim}}  
\def\rk{\mathsf{rk}}
\def\EC{\mathsf{EC}}
\def\WEC{\mathsf{WEC}}
\def\CEC{\mathsf{CEC}}
\def\IET{\mathsf{IET}}
\def\SAF{\mathsf{SAF}}

\newcommand{\indexw}{\R_{>0}} 

\newcommand{\brak}[1]{\ensuremath{\left\langle #1\right\rangle}}
\newcommand{\oplusop}[1]{{\mathop{\oplus}\limits_{#1}}}
\newcommand{\addfigure}{\vspace{0.1in} \begin{center} {\color{red} ADD FIGURE} \end{center} \vspace{0.1in} }
\newcommand{\add}[1]{\vspace{0.1in} \begin{center} {\color{red} ADD FIGURE #1} \end{center} \vspace{0.1in} }
\newcommand{\vspin}{\vspace{0.1in} }

\newcommand\circled[1]{\tikz[baseline=(char.base)]{\node[shape=circle,draw,inner sep=1pt] (char) {${#1}$};}} 

\let\oldemptyset\emptyset
\let\emptyset\varnothing

\let\oldtocsection=\tocsection
\let\oldtocsubsection=\tocsubsection
\renewcommand{\tocsection}[2]{\hspace{0em}\oldtocsection{#1}{#2}}
\renewcommand{\tocsubsection}[2]{\hspace{1em}\oldtocsubsection{#1}{#2}}

\renewcommand{\kbldelim}{(}
\renewcommand{\kbrdelim}{)}

\def\MK#1{{\color{red}[MK: #1]}}
\def\bfred#1{{\color{red}#1}}


\begin{abstract}
Eventually constant set-valued representations of a quiver are set-theoretic analogues of nilpotent representations. In recent work by Green--Holmes--Im, the authors enumerated eventually constant set-valued representations for equioriented cyclic quivers using the directed matrix-tree theorem. In this paper, we extend this enumeration to finite quivers without sinks for which every vertex is the target of sufficiently long paths. We encode the representations as directed acyclic graphs and introduce a recursive source-removal method for certain classes of directed acyclic graphs. This yields a strictly upper triangular matrix enumerator in the incidence algebra of the subset lattice. To compute the cardinality of the eventually constant representations, we compress this enumerator to a matrix indexed by cardinality vectors, the set-theoretic analogues of dimension vectors. We conclude by specializing the formulas to the Jordan quiver and recovering the multisymmetric generating polynomial for the cyclic quiver without using the matrix-tree theorem.
\end{abstract}

\maketitle
\tableofcontents

%
%

\section{Introduction}
\label{section_intro} 
Cayley's formula states that the number of unrooted trees with vertex set $V$ of cardinality $n$ is $n^{n-2}$. Rooted trees are trees together with a choice of a vertex (root), and there are $n^{n-1}$ of them. 
Tom Leinster showed eventually constant functions correspond to rooted trees in~\cite{Lei21}, providing a conceptual proof that avoids calculation and algebraic manipulation of $q$-binomial coefficients and algebraic partitions. An endomorphism $f:V\to V$ of a finite set is \defn{eventually constant} if its iterated compositions $f^m=f\circ\cdots\circ f$ eventually stabilize to a constant map. Since there are $n^{n}$ self-maps of $V$, this shows that the probability that a function is eventually constant is $1/n$.
In~\cite{GHI26,CIKLR25,CILR25}, the authors generalize the notion of eventual constancy. Instead of a single endomorphism $f$ on one set $V$, they consider cyclically oriented $k$-tuples $(f_1,\ldots,f_k)$ of maps across $k$ disjoint nonempty finite sets $V_1,\ldots,V_k$, where 
\[ 
V_1\xrightarrow{f_1}V_2\xrightarrow {f_2}\ldots\xrightarrow{f_{k-1}}V_k\xrightarrow{f_k}V_1. 
\]
In this setting, subscripts are taken modulo $k$, and a \defn{transversal} $C\subseteq\coprod_{i=1}^kV_i$ is a subset that contains a single element $c_i$ in each set $V_i$. A $k$-tuple $(f_1,\ldots,f_k)$ of maps is \defn{eventually constant} if there exists a transversal $C$ such that all sufficiently long cyclic compositions $f_{i+n}\circ f_{i+n-1}\circ\cdots\circ f_{i+1}\circ f_i$ map into $C$.

To derive a weighted enumeration of the eventually constant $k$-tuples, we identify each $k$-tuple with an oriented graph. Each oriented graph has a unique cycle along a transversal $C\subseteq\coprod_{i=1}^kV_i$, and removing its edges produces a forest rooted at $C$. The directed matrix-tree theorem~\cite{CK78} gives a determinantal formula for the weighted count of the resulting set of forests rooted at each transversal, and summing the cycle-weighted counts of the forests over all transversals yields the enumeration.

In this paper, we generalize eventual constancy from cyclically oriented maps to collections of sets and functions between them in any orientation, and then construct matrix enumerators. These collections of sets and oriented functions are the set-theoretic analogues of quiver representations,~\cite{DW05,DW17}. Furthermore, eventually constant set-theoretic representations are analogous to nilpotent linear representations of quivers~\cite{Ls91}.

Fix a \defn{quiver} $Q:=(Q_0,Q_1)$, a directed graph, where $Q_0$ is a finite set of vertices and $Q_1$ is a finite set of arrows between them. We assume that $Q$ has no sinks and that all vertices are the target of arbitrarily long paths.
Classically, a linear \defn{representation} $W$ of $Q$ is a collection $\{ W_a \}_{a\in Q_0}$ of finite dimensional vector spaces indexed by $a\in Q_0$, and a set $\{W_\alpha:W_{a}\to W_{b}\mid \alpha:a\to b\}$ of linear maps indexed by $\alpha\in Q_1$.
Passing to the set-theoretic analogue, a \defn{set-valued representation} replaces each finite dimensional vector space $W_a$ with a finite set, and replaces each linear map $W_\alpha:W_a\to W_b$ with a function. Equivalently, a linear representation is a functor from the path category on $Q$ to the category of finite dimensional vector spaces, and a set-valued representation is a functor from this path category to the category of finite sets. 

In place of the sets $V_1,\ldots,V_k$ from the cyclic case, we fix a tuple $V:=(V_a)_{a\in Q_0}$ of nonempty finite disjoint sets. Let $\overline V:=\coprod_{a\in Q_0}V_a$. Instead of the equioriented maps $(f_1,\ldots,f_k)$, we study the collection of set-valued representations of $Q$ with the underlying $Q_0$-indexed set $V$. A set-valued representation $W$ belongs to $\Rep(Q,V)$ if $W_a=V_a$ for each $a\in Q_0$. A classical representation is \defn{nilpotent} if all sufficiently long compositions are trivial. Since sets lack the zero object, we exchange nilpotency for constancy, and define a \defn{transversal} of $V$ as a subset $C\subseteq\overline V$ that contains a single element $c_a$ in each set $V_a$. A representation $W\in\Rep(Q,V)$ is \defn{eventually constant} if there exists a transversal $C$ such that for all sufficiently long paths $\alpha_\ell\cdots\alpha_1$ in $Q$, the composition $W_{\alpha_\ell}\circ\cdots\circ W_{\alpha_1}$ maps into $C$. Let $\EC$ denote the subset of eventually constant representations in $\Rep(Q,V)$.

To algebraically enumerate the eventually constant representations of $Q$ on $V$, we construct a directed graph $G(Q)$ on $\overline V$ and identify each representation of $Q$ on $V$ with a directed spanning subgraph. After assigning each directed subgraph a monomial weight in a polynomial ring of commuting variables, we define the generating function $P(\mathbf x)$ as the sum of weights over all graphs corresponding to an eventually constant representation. As in the $k$-cyclic case, these directed graphs become acyclic after removing edges directed along a transversal. However, unlike the cyclic case, these graphs may have higher out-degree. As a result, representations induce sets of directed acyclic graphs (DAGs) rather than sets of forests, and tree-enumeration techniques no longer apply.

We weigh collections of DAGs via recursive source removal~\cite{Gs96, Rb73}. A \defn{DAG assignment} $\mathcal D$ on the directed graph $G(Q)$ assigns each subset $U\subseteq\overline V$ a set $\mathcal D(U)$ of directed acyclic subgraphs of $G(Q)$ on the vertex set $U$. The assignment $\mathcal D$ is \defn{source-factorizable} if it satisfies certain axioms dictating how source edges can be attached to and removed from graphs in the assignment to form other graphs in the assignment. If $\mathcal D$ is source-factorizable, then the weights of the source edges that can be attached to graphs are encoded by an \defn{attaching weight matrix} $M$. The matrix $M$ is indexed over the power set $\mathcal{P}(\overline V)$ ordered by inclusion, and $M$ is strictly upper triangular. For all subsets $S\subseteq U\subseteq\overline V$, the weight of the subset of DAGs in $\mathcal D(U)$ with sink set $S\subseteq U$ is given by the entry $(I-M)^{-1}_{S,U}$. See Theorem~\ref{thm_source_factorizable_matrix_enumerator}, and Corollary~\ref{Hyperforest enumerator chain formulation-Prop} for a formulation with chains. Also, see~\cite{Rt64,St12} for more on incidence algebras. The eventually constant representations induce a source-factorizable family of DAGs, so we derive a matrix enumerator for $P(\mathbf x)$ in Section~\ref{subsec:gen_function_vertex}.

We replace the dimension vector of a linear representation with the \defn{cardinality vector}. For each subset $U\subseteq\overline V$, let $\mathbf U:=(|U\cap V_a|)_{a\in Q_0}\in\mathbb N^{Q_0}$. Then, the generating function for $\EC$ gives an expression for $|\EC|$ in terms of subsets $S\subseteq U$ of $\overline V$, and the contribution from each pair $S$, $U$ only depends on their cardinality vectors. This uniformity allows us to condense the strictly upper triangular matrix enumerator, passing from the matrix $M$ indexed on subsets $U,\,S\subseteq \overline V$ to the \defn{attaching cardinality matrix} $N$ indexed on vectors $\mathbf U,\,\mathbf S$. See Section~\ref{subsec:multinomial_notation} for an example.

To state the matrix $N$ and the enumeration result, we establish structure on the set of cardinality vectors $\mathbb N^{Q_0}$. Equip $\mathbb N^{Q_0}$ with the pointwise partial order, so that for each vector 
$\mathbf c:=(c_a)_{a\in Q_0},\,
\mathbf d:=(d_a)_{a\in Q_0}\in\mathbb N^{Q_0}$, we have $\mathbf c\leq\mathbf d$ if 
$c_a \leq d_a$ for all vertices $a\in Q_0$. For each $n\in\mathbb N$, let $\mathbf n:=(n)_{a\in Q_0}\in\mathbb N^{Q_0}$.

Let $\mathbf 0 = (0)_{a\in Q_0}$ and
$\mathbf{\overline V}=(|V_a|)_{a\in Q_0}$. 
Define the interval between $\mathbf 0$ and
$\mathbf{\overline V}$ as 
$[\mathbf 0,\mathbf{\overline V}] = \prod_{a\in Q_0} [0, |V_a|] \subseteq \mathbb N^{Q_0}$. Define exponentiation, the factorial, and $L_1$-norm on $\mathbf c,\,\mathbf d\in[\mathbf 0,\mathbf{\overline V}]$, by
\begin{equation*}
\mathbf c^\mathbf d:=\prod_{a\in Q_0} c_a^{d_a}\in\mathbb N,\qquad 
\mathbf c!:=\prod_{a\in Q_0} c_a! \in\mathbb N,\qquad |\mathbf c|:=\sum_{a\in Q_0}c_a\in\mathbb N,
\end{equation*}
respectively.
Let $\mathsf A(Q)\in\mathbb Z^{Q_0\times Q_0}$ be the \defn{adjacency matrix} of $Q$. Define the strictly upper triangular attaching cardinality matrix $N$ with entries in $[\mathbf 0,\mathbf{\overline V}]^2$, which is given by
\begin{equation}
   N_{\mathbf c,\mathbf d}:=
    \begin{cases}
        \frac{(-1)^{|\mathbf d-\mathbf c|-1}}{(\mathbf d-\mathbf c)!} 
        \mathbf c^{(\mathsf A(Q)(\mathbf d-\mathbf c))}&\text{if }\mathbf c<\mathbf d,\\
        \hspace{16mm} 0 & \text{otherwise}.
    \end{cases}
\end{equation}

In Theorem~\ref{thm:EC_cardinality_matrix_enumerator}, we derive the cardinality enumerator, restated below.
\begin{theorem}
    The cardinality of the set of eventually constant set-valued representations in $\Rep(Q,V)$ is
\begin{equation}
    |\EC|=\mathbf{\overline V}!(I-N)^{-1}_{\mathbf 1,\mathbf{\overline V}},
\end{equation}
where $I$ is the identity matrix.
\end{theorem}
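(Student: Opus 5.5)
The plan is to reduce $|\EC|$ to a sum over transversals of counts of directed acyclic graphs with a prescribed sink set. Then I count those graphs by inclusion--exclusion over sets of sources, which is the recursive source removal behind Theorem~\ref{thm_source_factorizable_matrix_enumerator}, and finally compress the resulting recursion from subsets to cardinality vectors.

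\emph{Step 1 (reduction to DAGs).} A representation $W\in\Rep(Q,V)$ is the same as a choice, for every $u\in V_a$ and every arrow $\alpha:a\to b$, of a target $W_\alpha(u)\in V_b$. This is a spanning subgraph of $G(Q)$ in which $u$ has one out-edge per arrow leaving $a$. Suppose $W$ is eventually constant with transversal $C=\{c_a\}$. Since every vertex is the target of arbitrarily long paths, $C$ must be closed: $W_\alpha(c_a)=c_b$ for every $\alpha:a\to b$. Moreover, $C$ is unique, and the structure on $C$ is forced. Delete the edges out of $C$. The resulting graph is acyclic with sink set exactly $C$: a cycle through a non-$C$ vertex would give arbitrarily long paths avoiding $C$, and every non-$C$ vertex has out-degree $\ge 1$ because $Q$ has no sinks. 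Conversely, any such acyclic choice on $\overline V\setminus C$, with the forced maps on $C$, is eventually constant, since paths longer than $|\overline V|$ must land in $C$. Writing $f(S,U)$ for the number of acyclic choices on $U\supseteq S$ with sink set exactly $S$ and all edges inside $U$, this gives
\[
|\EC|=\sum_{C}f(C,\overline V),
\]
and there are $\prod_a|V_a|$ transversals.

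\emph{Step 2 (source removal).} First I show that every such DAG on $U\supsetneq S$ has a source outside $S$. Elements of $S$ have no out-edges, so walking backwards along in-edges from a non-$S$ vertex stays outside $S$, and this walk terminates. Inclusion--exclusion over nonempty sets $T\subseteq U\setminus S$ of sources then gives
\[
f(S,U)=\sum_{\emptyset\ne T\subseteq U\setminus S}(-1)^{|T|-1}\,w(T,U\setminus T)\,f(S,U\setminus T),
\]
where $w(T,U\setminus T)$ counts the ways for vertices of $T$ to choose all their targets in $U\setminus T$. Removing $T$ leaves a valid DAG on $U\setminus T$ with the same sink set $S$. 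If $\mathbf c$ is the cardinality vector of $U\setminus T$, then $w=\prod_{a}\prod_{\alpha:a\to b}c_b^{\,t_a}=\mathbf c^{\mathsf A(Q)(\mathbf d-\mathbf c)}$, up to the paper's transpose convention for $\mathsf A(Q)$. This is exactly the attaching weight of Theorem~\ref{thm_source_factorizable_matrix_enumerator} specialized to all variables equal to $1$.

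\emph{Step 3 (compression).} By induction, $f(S,U)$ depends only on $\mathbf d=\mathbf U$ when $\mathbf S=\mathbf 1$; write it as $f(\mathbf d)$. For a fixed $\mathbf c$ with $\mathbf 1\le\mathbf c<\mathbf d$, the number of subsets $T\subseteq U\setminus S$ with $\mathbf U-\mathbf T=\mathbf c$ is $\frac{(\mathbf d-\mathbf 1)!}{(\mathbf c-\mathbf 1)!\,(\mathbf d-\mathbf c)!}$. Setting $h(\mathbf d)=f(\mathbf d)/(\mathbf d-\mathbf 1)!$, the recursion becomes
\[
h(\mathbf d)=\sum_{\mathbf c<\mathbf d}N_{\mathbf c,\mathbf d}\,h(\mathbf c),\qquad h(\mathbf 1)=1.
\]
Since $N$ is strictly upper triangular, hence nilpotent, this gives $h(\mathbf d)=(I-N)^{-1}_{\mathbf 1,\mathbf d}$. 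Rows with some $c_a=0$ never arise, since only vectors $\ge\mathbf 1$ are reachable from $\mathbf 1$. Therefore
\[
|\EC|=\prod_a|V_a|\cdot(\mathbf{\overline V}-\mathbf 1)!\,h(\mathbf{\overline V})=\mathbf{\overline V}!\,(I-N)^{-1}_{\mathbf 1,\mathbf{\overline V}}.
\]

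\emph{Main obstacle.} The delicate points are Step 1 and Step 2. In Step 1, one must check that closure of $C$ and acyclicity of the complement exactly characterize eventual constancy, which uses the hypotheses on $Q$. In Step 2, the inclusion--exclusion must range only over sources outside $S$, because sinks in $S$ may themselves be isolated sources at intermediate stages. I would verify this carefully against the source-factorizable axioms, so that Theorem~\ref{thm_source_factorizable_matrix_enumerator} applies directly. The final factorial bookkeeping is then routine.
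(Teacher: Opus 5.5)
Your proof is correct and is essentially the paper's. Step~1 is Lemma~\ref{lem:representations_decompose} plus the partition of $\EC$ by transversals, and Step~2 is Theorem~\ref{thm_source_factorizable_matrix_enumerator} specialized to counts. Your sources outside $S$ are exactly the non-sink sources $\Nss(H)$, and $\prod_{\alpha:a\to b}c_b^{t_a}=\mathbf c^{\mathsf A(Q)\mathbf t}$ holds exactly in the paper's convention $\mathsf A(Q)_{b,a}=\#\{\text{arrows } a\to b\}$, so no transpose caveat is needed.

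The only difference is in Step~3. You compress the one-step recursion directly, counting the $\binom{\mathbf d-\mathbf 1}{\mathbf c-\mathbf 1}$ admissible sets $T$ and dividing by $(\mathbf d-\mathbf 1)!$. The paper instead expands $(I-M)^{-1}$ over chains in $\mathcal P(\overline V)$ and counts chain fibers by multinomials (Theorem~\ref{thm_cardinality_quiver}). Your version is slightly more economical and is exactly the recurrence of Remark~\ref{remark:card_recurrence}, while the paper's route also produces the explicit multinomial chain formula.
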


In Section~\ref{section_background}, we establish our conventions for directed graphs, define source-factorizable DAG assignments, and introduce set-valued quiver representations. In Section~\ref{sec:weighted_enumeration_source_factorizable_DAGs}, we derive a recursive matrix enumerator for sets of source-factorizable DAGs. In Section~\ref{sec:eventually_constant_set-valued_representations}, we prove that eventually constant set-valued representations induce a source-factorizable DAG assignment. We then compute their generating functions in both edge and vertex variables, and evaluate the cardinality of these representations. Using chains, we compress the matrix enumerator for the cardinality calculation. Finally, in Section~\ref{sec:word_eventually_constant}, we apply our general formulas to the specific quivers, the $j$-Jordan quiver and the cyclic quiver.
In Section~\ref{sec:partition_algorithm_Jordan_quiver}, we outline an alternative algorithm for the generating function over the Jordan quiver using partitions.

\subsection*{Acknowledgments}
The authors would like to thank Mikhail Khovanov for productive discussions. 
The authors would like to thank the Department of Mathematics and the Dean's Office of Krieger School of Arts $\&$ Sciences at Johns Hopkins University for their support. 
M.S. Im would like to thank the Simons Foundation in New York, NY for a dynamic and collaborative workspace. 
The authors were partially supported by Simons Collaboration Award 994328. C. Holmes was also partially supported by NSF grant DMS-2428878.

\section{Background}
\label{section_background}
In Sections~\ref{subsection_intro_graphs} and \ref{subsection_stability_source_factor}, we give necessary background on graphs. 
All graphs under consideration are taken to be finite directed multigraphs. In Section~\ref{subsection_quivers}, we define set-valued representations of quivers. In Section~\ref{subsec:encoding_representations}, we embed the representations as directed graphs.

\subsection{Introduction to graphs}
\label{subsection_intro_graphs}
Let $G=(G_V,G_E)$ be a finite directed multigraph, where 
$G_V$ is the set of vertices and $G_E$ is the set of edges. Each edge $e\in G_E$ is directed from a single vertex $s(e)\in G_V$ called its source, to a single vertex $t(e)\in G_V$ called its target. We permit multiple parallel edges and \defn{loops} (edges from a vertex to itself).
 
An (oriented) \defn{subgraph} of $G$ is a graph $H=(H_V, H_E)$ satisfying $H_V\subseteq G_V$ and $H_E\subseteq G_E$, such that the sources and targets are inherited from $G$. We say a subgraph $H$ is a \defn{spanning subgraph} if the vertex sets are the same, $H_V=G_V$. Given a set $U$, we say a graph is \defn{on} $U$ if it has vertex set $U$.

Let $H,G$ be graphs with disjoint edge sets (but not necessarily disjoint vertex sets). Their \defn{edge-disjoint union} $H\star G$ is the graph on the combined vertex set $H_V\cup G_V$ with edge set $H_E\sqcup G_E$. Each edge of $H\star G$ retains its respective source and target from $H$ or $G$, respectively.

We define two subgraphs of $G$ induced by a subset $T\subseteq G_V$. First, the \defn{restriction} $G|_T$ is the subgraph on the vertex set $T$ consisting of all edges in $G$ whose source and target lie in $T$. Second, $G^\out_T$ is the spanning subgraph of $G$ consisting of all edges with source in $T$. 

A \defn{walk} of length $\ell \ge 1$ in $G$ is a sequence of edges $ e_\ell\cdots e_1$ where the target of each edge $e_{i-1}$ is the source of the edge $e_i$ for $i=2,\ldots,\ell$. The \defn{source} of a walk is the source $s(e_1)$ of its first edge, and the \defn{target} of a walk is the target $t(e_\ell)$ of its final edge. A walk is \defn{closed} if its source and target are the same, and $G$ is a \defn{directed acyclic graph} (DAG) if it does not admit any closed walk. Since $G$ is finite, $G$ is acyclic if and only if its walks are bounded in length.

The \defn{successors} of each vertex $u\in G_V$ are the vertices $v\in G_V$ that are the target of a walk with source $u$. In particular, if there is an edge from $u$ to $v$, then $v$ is a \defn{direct successor} of $u$.
For each subset $T\subseteq G_V$, $T$ is \defn{closed} under successors in $G$ if $T$ contains all successors of each of its vertices $u\in T$, or equivalently, if $T$ contains all direct successors of each of its vertices.
Thus, $T$ is closed under successors if and only if all edges with source in $T$ belong to $G|_T$, if and only if $G=G^\out_{G_V\setminus T}\star G|_T$. See Figure~\ref{fig_00018}.

\begin{figure}[htbp]
    \centering
    \begin{tikzpicture}[
        scale=0.85, transform shape,
        domain_node/.style={circle, draw=black!70, fill=brown!15, thick, minimum size=7mm, font=\small\bfseries},
        edge_t/.style={->, >={Stealth[length=3mm]}, blue!75!black, very thick},
        edge_u/.style={->, >={Stealth[length=2.5mm]}, green!50!black, thick, dashed}
        ]

        \filldraw[fill=green!5, draw=green!30, thick] 
        plot[smooth cycle, tension=0.7] coordinates {(-0.5,2) (0.5,4.5) (4,4.7) (8,4.2) (7.8,-0.2) (3,-0.5) (-0.2,0)};
        
        \node[text=green!60!black] at (2.0, 4.2) {\Large $G_V\setminus T$};

        \filldraw[fill=blue!5, draw=blue!30, thick] 
        plot[smooth cycle, tension=0.7] coordinates {(3.6,2) (3.6,3.8) (5.5,4.2) (7.4,3.8) (7.2,0.2) (4.5,0.2)};
        
        \node[text=blue!70!black] at (5.5, 3.8) {\Large $T$};

        \node[domain_node] (u1) at (0.5, 3.4) {$1$};
        \node[domain_node] (u2) at (2.0, 3.5) {$2$};
        \node[domain_node] (u3) at (1.0, 2.0) {$3$};
        \node[domain_node] (u4) at (2.5, 1.5) {$4$};
        \node[domain_node] (u5) at (1.5, 0.4) {$5$};

        \node[domain_node] (t6) at (4.5, 3.2) {$6$};
        \node[domain_node] (t7) at (6.5, 3.2) {$7$};
        \node[domain_node] (t8) at (6.0, 2.0) {$8$};
        \node[domain_node] (t9) at (4.5, 1.2) {$9$};
        \node[domain_node] (t10) at (6.5, 0.8) {$10$};

        \draw[edge_u] (u1) to (u2);
        \draw[edge_u] (u1) to (u3);
        \draw[edge_u] (u3) to (u4);
        \draw[edge_u] (u3) to (u5);
        \draw[edge_u] (u2) to[bend left=10] (t6);
        \draw[edge_u] (u4) to[bend right=10] (t6);
        \draw[edge_u] (u4) to[bend left=10] (t9);
        \draw[edge_u] (u5) to[bend right=10] (t9);

        \draw[edge_t] (t6) to[bend left=15] (t7);
        \draw[edge_t] (t7) to[bend left=15] (t8);
        \draw[edge_t] (t8) to[bend left=15] (t6);
        \draw[edge_t] (t9) to (t10);
        \draw[edge_t] (t10) to (t8);
        \draw[edge_t] (t9) to (t6);
        
    \end{tikzpicture}
    \caption{The nodes represent the vertices of a graph $G$ and the arrows represent its arrows. The subset $T\subseteq G_V$ is closed under successors. The dashed green edges belong to the subgraph $G^{\out}_{G_V\setminus T}$ on $G_V$, and the thick blue arrows belong to $G|_T$ on $T$. Note that $G=G^{\out}_{G_V\setminus T}\star G|_T$.}
    \label{fig_00018}
\end{figure}

The \defn{out-degree} of a vertex $u\in G_V$ is the number of edges of $G$ with source $u$. The \defn{in-degree} is defined dually. Let $\Src(G)\subseteq G_V$ denote the set of \defn{source vertices}, i.e., vertices with in-degree $0$, and let $\Snk(G)\subseteq G_V$ denote the set of \defn{sink vertices}, i.e., vertices with out-degree $0$. Then, let $\Nss(G) = \Src(G)\setminus\Snk(G)$ denote the set of \defn{non-sink sources}.

When algebraically enumerating graphs, we fix an ambient graph $G$ and work in the polynomial rings of its commuting edge and vertex variables,
\[R_{G_E}:=\mathbb Z[x_e:e\in G_E],\quad R_{G_V}:=\mathbb Z[x_v:v\in G_V].\]

For each subgraph $H$ of $G$, we define the monomial weight
\[ {w}(H):=\prod_{e\in H_E}x_e\in R_{G_E}.\]
Then, we extend the function additively. For each subset $\mathcal{S}$ of subgraphs of $G$, define
\[ {w}(\mathcal S):=\sum_{H\in \mathcal S} {w}(H)\in R_{G_E}.\] 
Note that $\mathcal S$ is finite since $G$ is, hence the sum is finite.

If $H$ and $H'$ are subgraphs of $G$ with disjoint edge sets, then $H\star H'$ is also a subgraph of $G$. By disjointness, the weight factors as
\[ {w}(H\star H')=\prod_{e\in H_E\sqcup H'_E}x_e= {w}(H) {w}(H')\in R_{G_E}.\]

We pass from edge to vertex variables by the homomorphism $R_{G_E}\to R_{G_V}$ induced by the target specialization $x_e\mapsto x_{t (e)}$, where $t(e)$ is the target vertex of $e$. Under this specialization, the monomial weight of the graph $H$ becomes
\[w(H)=\prod_{e\in H_E}x_{t(e)}\in R_{G_V}.\]
Equivalently, the exponent of each variable $x_v$ is the in-degree of the vertex $v\in G_V$ in $H$.

\subsection{Source-factorizability and attaching graphs}
\label{subsection_stability_source_factor}

For this section, fix a finite ambient graph $G$. For each vertex subset $U\subseteq G_V$, let $\DAG(U)$ denote the set of directed acyclic subgraphs of $G$ on the vertex set $U$. Ultimately, we need an enumerator for appropriate subcollections of $\{\DAG(U)\}_{U\subseteq G_V}$, so we fix a \defn{DAG assignment} on $G$, a function $\mathcal D$ that maps each subset $U\subseteq G_V$ to a subset $\mathcal D(U)\subseteq\DAG(U)$.

We now introduce several auxiliary sets of graphs that encode how source edges attach to DAGs in the assignment. With these sets, we can then state uniformity axioms for $\mathcal D$ and define a matrix enumerator for sets of DAGs given by assignments that satisfy these axioms.

For subsets $T\subseteq U\subseteq G_V$, let $\mathcal B(T,U)$ denote the subset of DAGs in $\mathcal D(U)$ for which all vertices in $T$ have outgoing edges but no incoming edges
\begin{equation}
    \mathcal B(T,U):=\{H\in\mathcal D(U):T\subseteq\Nss(H)\}.
\end{equation}
We analyze how these outgoing edges in $T$ can be removed from graphs in $\mathcal B(T,U)$. 

For subsets $T\subseteq U\subseteq G_V$ and each DAG $H\in\mathcal D(U\setminus T)$, define the set of \defn{attaching graphs} $\mathcal A_{H}(T,U)\subseteq\DAG(U)$; let $H'\in\mathcal A_{H}(T,U)$
if all edges of $H'$ have source in $T$ and target in $U\setminus T$, and $H'$ attaches to $H$ to form $H'\star H$ in $\mathcal B(T,U)$.

\begin{figure}[htbp]
    \centering
\begin{tikzpicture}[
	scale=0.8, transform shape,
	domain_node/.style={circle, draw=black!70, fill=brown!15, thick, minimum size=7mm, font=\small\bfseries},
	edge_t/.style={->, >={Stealth[length=3mm]}, blue!75!black, very thick}, 
	edge_u/.style={->, >={Stealth[length=2.5mm]}, green!50!black, thick, dashed} 
	]
	
	\filldraw[fill=green!5, draw=green!30, thick] 
	plot[smooth cycle, tension=0.7] coordinates {(0, 4) (4.5, 4) (5, 1) (4.2, -1.8) (0.8, -1.8) (-0.5, 1)};
	
	\node[text=green!60!black] at (2.5, -1.7) {\Large $[10] \setminus [4]$};
	
	\node[domain_node] (u5) at (3.25, 3.2) {$5$};
	\node[domain_node] (u6) at (2.25, 2.0) {$6$};
	\node[domain_node] (u7) at (4.25, 2.0) {$7$};
	\node[domain_node] (u8) at (3, 0.5) {$8$};
	\node[domain_node] (u9) at (1.5, -1.0) {$9$};  
	\node[domain_node] (u10) at (3.5, -1.0) {$10$}; 
	
	\filldraw[fill=blue!5, draw=blue!30, thick] 
	plot[smooth cycle, tension=0.7, thick] coordinates {(-2.5, 4.5) (1, 4.5) (1.5, 2.5) (0.5, -0.5) (-1.5, -0.5) (-3, 2)};
	
	\node[text=blue!70!black] at (-0.5, 4.1) {\Large $[4]$};
	
	\node[domain_node] (t1) at (-1.5, 3.3) {$1$};
	\node[domain_node] (t2) at (0, 2.6) {$2$};
	\node[domain_node] (t3) at (-1, 1.8) {$3$};
	\node[domain_node] (t4) at (-0.5, 0.3) {$4$};
	
	\draw[edge_t] (t1) to[bend left=10] (u5);
	\draw[edge_t] (t2) to[bend left=10] (u6);
	\draw[edge_t] (t3) to[bend left=10] (u8);
	\draw[edge_t] (t4) to[bend right=10] (u9);
	
	\draw[edge_u] (u5) to (u7);
	\draw[edge_u] (u6) to (u8);
	\draw[edge_u] (u7) to (u8);
	\draw[edge_u] (u8) to (u9);
	
\end{tikzpicture}
    \caption{The dashed green arrows represent an in-forest $H\in\mathcal{D}^{(1)}([10] \setminus [4])$. The thick blue arrows represent an attaching graph $H'\in\mathcal{A}^{(1)}_H([4], [10])$. Combining their edges forms an in-forest $H'\star H\in\mathcal B^{(1)}([4], [10])$.}
    \label{fig_00016}
\end{figure}

\begin{example}\label{ex:DAG_assignment_setup_1}
    For each $n\in\mathbb N$, let $[n] := \{1,\ldots,n\}$, and let $G$ be the complete directed graph on the vertex set $[10]$. We consider two DAG assignments on $G$.
    
    First, the in-forests furnish a DAG assignment on $G$. For each subset $U\subseteq [10]$, let $\mathcal D^{(1)}(U)\subseteq\DAG(U)$ be the set of directed in-forests on $U$ (DAGs where every vertex has an out-degree of at most $1$). An element $H \in \mathcal B^{(1)}([4],[10])$ is an in-forest on $[10]$ where the vertices $1, 2, 3, 4$ are non-sink sources. Thus, each vertex in $[4]$ has no incoming edges and exactly one outgoing edge with its target in $[10]\setminus[4]$, making them \defn{leaves}.

    Forests are closed under the addition of leaves. More precisely, consider any forest $H \in \mathcal D^{(1)}([10]\setminus [4])$ and any graph $H'$ on $[10]$ with a single directed edge from each vertex in $[4]$ to a vertex in $[10]\setminus[4]$. The edge-disjoint union $H' \star H$ is acyclic and its out-degree remains bounded by $1$, so $H' \star H \in \mathcal B^{(1)}([4],[10])$. Thus, $H'\in\mathcal A^{(1)}_H([4],[10])$. Furthermore, all attaching graphs in $\mathcal A^{(1)}_H([4],[10])$ are of this form. See Figure~\ref{fig_00016}.

    For the second DAG assignment on $G$, we restrict walk-length. For each subset $U\subseteq[10]$, let $\mathcal D^{(2)}(U)$ be the set of DAGs on $U$ whose walks have a maximum length of $2$. Graphs in $\mathcal B^{(2)}([4],[10])$ have at least one outgoing edge from each vertex in $[4]$ with target in $[10]\setminus[4]$, and no incoming edges in $[4]$.
    Let $H\in\mathcal D^{(2)}([10]\setminus[4])$ be a DAG with a single walk of length $2$, with source vertex $5$. Then, $H'$ is an attaching graph in $\mathcal A^{(2)}_H([4],[10])$ if its edges have source in $[4]$ and target in $([10]\setminus[4])\backslash\{5\}=[10]\backslash[5]$. See Figure~\ref{fig_00019}.
\end{example}

\begin{figure}[htbp]
    \centering
\begin{tikzpicture}[
	scale=0.8, transform shape,
	domain_node/.style={circle, draw=black!70, fill=brown!15, thick, minimum size=7mm, font=\small\bfseries},
	edge_b/.style={->, >={Stealth[length=3mm]}, blue!75!black, very thick}, 
	edge_r/.style={->, >={Stealth[length=3mm]}, red!75!black, ultra thick, dotted}, 
	edge_u/.style={->, >={Stealth[length=2.5mm]}, green!50!black, thick, dashed} 
	]
	
	\filldraw[fill=green!5, draw=green!30, thick] 
	plot[smooth cycle, tension=0.7] coordinates {(0, 4) (4.5, 4) (5, 1) (4.2, -1.8) (0.8, -1.8) (-0.5, 1)};
	
	\node[text=green!60!black] at (2.75, -1.65) {\Large $[10] \setminus [4]$};
	
	\node[domain_node] (u5) at (4.25, 3.2) {$5$};
	\node[domain_node] (u6) at (3.25, 2.25) {$6$};
	\node[domain_node] (u7) at (4.5, 2.0) {$7$};
	\node[domain_node] (u8) at (4.0, 0.5) {$8$};
	\node[domain_node] (u9) at (1.5, -1.2) {$9$}; 
	\node[domain_node] (u10) at (3.5, -0.75) {$10$}; 
	
	\filldraw[fill=blue!5, draw=blue!30, thick] 
	plot[smooth cycle, tension=0.7, thick] coordinates {(-2.5, 4.5) (1, 4.5) (1.5, 2.5) (0.5, -0.5) (-1.5, -0.5) (-3, 2)};
	
	\node[text=blue!70!black] at (-0.5, 4.1) {\Large $ [4]$};
	
	\node[domain_node] (t1) at (-1.5, 3.3) {$1$};
	\node[domain_node] (t2) at (0.6, 3.05) {$2$};
	\node[domain_node] (t3) at (-1, 1.8) {$3$};
	\node[domain_node] (t4) at (-0.9, 0.3) {$4$};
	
	\draw[edge_u] (u5) to (u6);
	\draw[edge_u] (u6) to (u7);
	\draw[edge_u] (u8) to (u10);
	\draw[edge_u] (u9) to (u10);
	
	\draw[edge_b] (t1) to[bend right=10] (u6);
	\draw[edge_b] (t2) to[bend left =13] (u8);
	\draw[edge_b] (t3) to[bend right=10] (u9);
    \draw[edge_b] (t3) to[bend right=15] (u8);
	\draw[edge_b] (t4) to[bend right=15] (u9);
	
	\draw[edge_r] (t1) to[bend left=10] (u5);
	\draw[edge_r] (t2) to[bend left=15] (u6);
	\draw[edge_r] (t3) to[bend left=10] (u8);
	\draw[edge_r] (t4) to[bend left=10] (u10);
	
\end{tikzpicture}
    \caption{The dashed green arrows represent a graph $H\in\mathcal D^{(2)}([10]\setminus[4])$ with a single walk of length $2$ starting at vertex $5$. The thick blue arrows represent an attaching graph in $\mathcal A^{(2)}_H([4],[10])$, and the dotted red arrows represent a graph that is not in $\mathcal A^{(2)}_H([4],[10])$.}
    \label{fig_00019}
\end{figure}

The assignment $\mathcal D$ on $G$ is \defn{source-factorizable} if for any subsets $T\subseteq U\subseteq G_V$,
    \begin{enumerate}[(i)]
    \item \label{Leaf-deletion condition}
    for each $H\in\mathcal B(T,U)$, the restricted subgraph $H|_{U\setminus T}$ is in $\mathcal D({U\setminus T})$,
    \item \label{DAG independence condition}
    all DAGs $H\in\mathcal D({U\setminus T})$ induce the same attaching set $\mathcal A_{H}(T,U)$, and 
    \item \label{Nontriviality_condition}
    each collection $\mathcal D(U)$ contains the edgeless graph on $U$.
    \end{enumerate}
Condition~\ref{Leaf-deletion condition} states that $\bigcup_{U\subseteq G_V}\mathcal D(U)$ is closed under the removal of non-sink source vertices and their outgoing edges. Condition~\ref{Nontriviality_condition} asserts that the sets $\mathcal D(U)$ are nonempty. Condition~\ref{DAG independence condition} gives uniformity of the attaching sets. Since the set $\mathcal D(U\setminus T)$ is nonempty by Condition~\ref{Nontriviality_condition}, we can define $\mathcal A(T,U)$ as the unique set $\mathcal A_H(T,U)$ induced by any $H\in\mathcal D(U\setminus T)$.

\begin{example}
    We return to the DAG assignments on the complete directed graph $G$ on $[10]$.

    The in-forest assignment $\mathcal D^{(1)}$ is source-factorizable. For each in-forest $H\in\mathcal B^{(1)}([4],[10])$, the vertices in $[4]$ are leaves and forests are closed under leaf removal, so the restriction $H|_{[10]\setminus [4]}$ is a forest on $[10]\setminus [4]$. Also, for each in-forest $H\in\mathcal D^{(1)}([10]\setminus[4])$, the attaching set $\mathcal A_H^{(1)}([4],[10])$ consists of all graphs on $[10]$ with a single edge from each vertex of $[4]$ to a vertex in $[10]\setminus [4]$. These results for $[4],\,[10]$ generalize to all subsets $T\subseteq U\subseteq [10]$, so $\mathcal D^{(1)}$ satisfies Conditions~\ref{Leaf-deletion condition} and~\ref{DAG independence condition}.  Condition~\ref{Nontriviality_condition} holds since the edgeless graph on each set is an in-forest.

    In contrast, the walk-length assignment $\mathcal D^{(2)}$ is not source-factorizable. Let $H,\,H'\in\mathcal D^{(2)}([10]\setminus [4])$, and only $H$ has a walk of length $2$ starting at vertex $5$. Then, attaching graphs in $\mathcal A^{(2)}_{H'}([4],[10])$ can have edges targeting vertex $5$, whereas attaching graphs in $\mathcal A^{(2)}_H([4],[10])$ cannot.
\end{example}

Fix a source-factorizable DAG assignment $\mathcal D$ on $G$. Then, for all subsets $T\subseteq U\subseteq G_V$, define the \defn{attaching weight},
\begin{equation}
     {A}(T,U):= {w}(\mathcal A(T,U))\in R_{G_E}.
\end{equation}
With these weights, we construct the \defn{attaching weight matrix} $ {M}$. Define the square matrix indexed by the subsets of $G_V$ with entries in $R_{G_E}$ given by
\begin{equation}\label{eq_W}
	 {M}_{T,U}:=
	\begin{cases}
	(-1)^{|U\setminus T|-1} {A}(U\setminus T,{U})&\text{if } T\subsetneq U,\\
	\hspace{18mm} 0&\text{otherwise.}
	\end{cases}
\end{equation}
Since ${M}$ is strictly upper triangular, it is nilpotent. So, $(I- {M})^{-1}$ expands as a finite geometric sum in $ {M}$, with $I$ denoting the identity matrix.

Finally, we define the sink-restricted subsets of our constructions above.
For all subsets $T\subseteq U\subseteq G_V$ and $S\subseteq G_V$, let $\mathcal D_S(U)$ and $\mathcal B_S(T,U)$ denote subsets of DAGs in $\mathcal D(U)$ and $\mathcal B(T,U)$ that have sink set $S$,
\begin{equation}
\label{eqn_B_S_TU}
\mathcal D_S(U):=\{H\in\mathcal D(U):\Snk(H)=S\},\quad \mathcal B_S(T,U):= \{H\in\mathcal B(T,U):\Snk(H)=S\}.
\end{equation}
Note $\mathcal D_S(U)$ is empty if $S\not\subseteq U$, and $\mathcal B_S(T,U)$ is empty if $S\not\subseteq U\setminus T$. The attaching weight matrix gives an enumeration for $\mathcal D_S(U)$ in Theorem~\ref{thm_source_factorizable_matrix_enumerator}.

\begin{example}
    Consider the source-factorizable DAG assignment $\mathcal D^{(1)}$ of in-forests on $G$. For all subsets $T\subseteq U\subseteq[10]$, the attaching graphs $\mathcal A(T,U)$ are precisely the graphs that assign each vertex in $T$ exactly one edge with target in $U\setminus T$. These graphs identify with the functions from $T$ to $U\setminus T$, so it can be shown that the attaching weight factors into a product over the source vertices:
    \begin{equation}
        A(T,U) = \prod_{u\in T}\sum_{v\in U\setminus T}x_{u,v} \in R_{G_E}.
    \end{equation}
    Under the target variable specialization $x_{u,v} \mapsto x_v$, the inner sum simplifies to
    \begin{equation}
        A(T,U) = e_1(U\setminus T)^{|T|} \in R_{G_V},
    \end{equation}
    where $e_1$ is the elementary symmetric polynomial of degree $1$, $e_1(U\setminus T) := \sum_{v\in U\setminus T} x_v$.
    Under the target variable specialization, the attaching weight matrix $M$ has entries
    \begin{equation}
        M_{T,U}=
        \begin{cases}
            (-1)^{|U\setminus T|-1} e_1(T)^{|U\setminus T|} &\text{if }T\subsetneq U,\\
            \hspace{18mm} 0 &\text{otherwise.}
        \end{cases}
    \end{equation}
    Also, the sink vertices of an in-forest are precisely its roots, so for all subsets $S\subseteq U\subseteq [10]$, $\mathcal D^{(1)}_S(U)$ is the subset of forests on $U$ with root set $S$.
\end{example}

\subsection{Partially ordered sets and chains}
\label{subsec_partially_ordered_chains}

Let $P$ be a partially ordered set. A strictly increasing \defn{chain} of length $d\in\mathbb N$ is a $(d+1)$-tuple $\Gamma:=(\Gamma_0,\Gamma_1,\ldots,\Gamma_d)$ in $P$ such that $\Gamma_{i-1}<\Gamma_{i}$ for each $i=1,\ldots,d$. We say $\Gamma$ is a chain from its first element $\Gamma_0$ to its last element $\Gamma_d$, and we denote its length by $L(\Gamma)=d$.

For each pair of elements $b,\,c\in P$, let $\mathcal C(b,c)$ denote the set of strictly increasing chains from $b$ to $c$. When working with multiple partially ordered sets, we specify $P$ with the subscript $\mathcal C_P(b,c)$.

\begin{example}
    Consider the partially ordered power set $\mathcal P([10])$. Then $\Gamma=([2],[3],[3]\cup\{5,7,9\},[9])$ is a chain from $[2]$ to $[9]$, and $L(\Gamma)=3$.
\end{example}

\subsection{Set-valued quiver representations}
\label{subsection_quivers}

In this section, fix a finite \defn{quiver} $Q = (Q_0,Q_1)$, which is a finite directed multigraph, where $Q_0$ is the set of vertices and $Q_1$ is the set of arrows. We also fix a $Q_0$-indexed tuple $V:=(V_a)_{a \in Q_0}$ of nonempty, finite, disjoint sets and abbreviate $\overline V:=\coprod_{a\in Q_0}V_a$. For each arrow $\alpha\in Q_1$, let $s(\alpha)$ and $t(\alpha)$ denote its \defn{source} and \defn{target} vertices, respectively, written $\alpha:s(\alpha)\to t(\alpha)$. For each subset $U\subseteq \overline V$ and each vertex $a\in Q_0$, let $U_a:=U\cap V_a$.

A \defn{path} of length $\ell \ge 1$ in $Q$ is a sequence of arrows $p := \alpha_\ell \cdots \alpha_1$ where $t(\alpha_{i-1}) = s(\alpha_i)$ for each $i=2,\ldots,\ell$. 
As with directed graphs, the source of a path is the source of its first arrow and the target of a path is the target of its last arrow. A path of length $\geq 1$ is an \defn{oriented cycle} if it has the same source and target. A \defn{successor} of a vertex $a\in Q_0$ is any vertex $b\in Q_0$ for which there is a path $p$ from $a$ to $b$, and a subset $X\subseteq Q_0$ is \defn{closed under successors} if $X$ contains the successors of each of its vertices.
A vertex is a \defn{sink} if it has no successors.

Throughout this paper, we assume $Q$ has no sinks and that every vertex is the target of a path of length $\geq|Q_0|$. Since $Q_0$ is finite, this is equivalent to assuming every vertex is the target of arbitrarily long paths, or rather every vertex is a successor of a vertex belonging to a directed cycle.

The \defn{adjacency matrix} $\mathsf{A}(Q)\in \mathbb{Z}^{Q_0 \times Q_0}$ is defined such that $\mathsf{A}(Q)_{b,a}$ is the number of arrows from $a$ to $b$. The \defn{in-degree} of a vertex $b\in Q_0$ is the number of arrows with target $b$, denoted $d_{in}^{Q_0}(b)\in\mathbb N$.

In the literature, see e.g.~\cite{DW05,DW17}, a set-valued representation $W$ of a quiver $Q$ consists of a collection $\{W_a\}_{a\in Q_0}$ of finite sets and a collection $\{ W_\alpha:W_{s(\alpha)}\to W_{t(\alpha)}\}_{\alpha\in Q_1}$ of maps. But in this paper, a \defn{set-valued representation} $W$ of $Q$ is a $Q_0$-indexed tuple $(W_a)_{a\in Q_0}$ of finite sets and a $Q_1$-indexed tuple $(W_\alpha:W_{s(\alpha)}\to W_{t(\alpha)})_{\alpha\in Q_1}$ of maps between these sets.

Alternatively, let $\mathcal Q$ be the free category generated by $Q$, called the \defn{path category}. Its objects are the vertices $a\in Q_0$, its morphisms are paths in $Q$, and its composition is path concatenation. On each vertex $a\in Q_0$, we also specify a $0$-length path on $a$ to serve as the identity. Then, set-valued representations correspond precisely to functors from $\mathcal Q$ to the category of finite sets. For any subset $U\subseteq \overline V$, let $\Rep(Q,(U_a)_{a\in Q_0})$ denote the set of \defn{set-valued representations on $U$}, which are the set-valued representations $W$ of $Q$ such that $W_a=U_a$ for each $a\in Q_0$. Hereafter, we may omit the prefix ``set-valued".

\begin{example}\label{ex:quiver_rep_3cycle}
    Let $Q$ be the $3$-cycle quiver with vertex set $Q_0 = \{1, 2, 3\}$ and arrows 
    \[1\xrightarrow{\alpha_1}2\xrightarrow{\alpha_2}3\xrightarrow{\alpha_3}1.\]
   Then, $Q$ has no sinks and every vertex is the successor of an element in a directed cycle.
    
    Define the disjoint sets $V_1 = \{x_1, x_2, x_3\}$, $V_2 = \{y_1, y_2, y_3\}$, and $V_3 = \{z_1, z_2, z_3\}$. A representation $W$ on $V:=(V_1,V_2,V_3)$ is determined by its $3$ maps on arrows,
\[V_1\xrightarrow{W_{\alpha_1}}V_2\xrightarrow{W_{\alpha_2}}V_3\xrightarrow {W_{\alpha_3}}V_1.\]
Morphisms in the path category $\mathcal{Q}$ are paths in $Q$. Paths $p:=\alpha_{2}\alpha_{1}\alpha_3$ and $p':=\alpha_2\alpha_1$ concatenate to form the path $pp'=\alpha_2\alpha_1\alpha_3\alpha_2\alpha_1$. The path $pp'$ evaluates to the composite map $W_{pp'} = W_{\alpha_2}\circ W_{\alpha_1}\circ W_{\alpha_3}\circ W_{\alpha_2}\circ W_{\alpha_1}: V_1 \to V_3$.
\end{example}

Let $U\subseteq \overline V$. Then a representation $W\in\Rep(Q,(U_a)_{a\in Q_0})$ \defn{stabilizes} in a subset $S\subseteq U$ if $S$ is closed under the maps $(W_\alpha)_{\alpha\in Q_1}$, and for all sufficiently long paths $p$ in $Q$, the function $W_p$ maps into $S$. More precisely, $W$ stabilizes in $S$ if for all arrows $\alpha\in Q_1$, $W_\alpha$ maps $S_{s(\alpha)}$ into $S_{t(\alpha)}$, and if for all paths $p:=\alpha_\ell\cdots \alpha_1$ in $Q$ longer than a fixed length $\ell_0\in\mathbb N$, $W_p$ maps $U_{s(\alpha_1)}$ into $S_{t(\alpha_\ell)}$. Let $\EC_S(U)$ denote the set of representations on $U$ that stabilize in $S\subseteq U$. Note that $\ell_0$ can be taken as $|U|-|S|$. To see this, consider any representation $W$, any path $p:=\alpha_\ell\cdots \alpha_1$, and any vertex $u\in U_{s(\alpha_1)}$.
If the sequence of vertices $(u,W_{\alpha_{1}}(u),\ldots,W_{\alpha_{\ell}\cdots\alpha_1}(u))$ repeats before entering $S$, then there exists a nontrivial path $p'$ and vertex $v\in U\setminus S$ for which $W_{p'}(v)=v$, so $W\not\in\EC_S(U)$. Thus, if $W\in\EC_S(U)$ and $\ell>|U|-|S|$, then $W_p(u)\in S$.

A \defn{transversal} of $V$ is a vertex subset $C\subseteq \overline V$ such that for each vertex $a\in Q_0$, $C_a\subseteq V_a$ is a singleton. We denote the unique element of each set $C_a$ by $c_a$, and denote the set of transversals as $\mathcal T(V)$. We say that a representation on $V$ is \defn{eventually constant} if it stabilizes in a transversal, and we let $\EC$ denote the set of such representations on $\overline V$.
Given a vertex subset $U\subseteq \overline V$, we say that a representation $W\in\Rep(Q,(U_a)_{a\in Q_0})$ is \defn{eventually constant on $U$} if the representation stabilizes in a transversal $C\in\mathcal T(V)$ contained in $U$.

Observe that the closure condition is redundant for transversals $C\in\mathcal T(V)$. If $W\in\Rep(Q,V)$ and for all sufficiently long paths $p$ in $Q$ the function $W_p$ maps into $C$, then $C$ is closed under the functions $(W_\alpha)_{\alpha\in Q_1}$; for each arrow $\alpha:a\to b$, there are sufficiently long paths $p$ in $Q$ with target $a\in Q_0$, so $\alpha p$ is a sufficiently long path. Thus, $W_p$ maps into $\{c_a\}$, and $W_\alpha\circ W_p$ maps into $\{c_b\}$, so $W_\alpha(c_a)=c_b$.

A quiver $Q'$ is a \defn{subquiver} of $Q$ if all vertices and arrows in $Q'$ belong to $Q$, and the source and target of each arrow in $Q'$ is the same as in $Q$. If $Q'$ is a subquiver of $Q$, then each representation $W\in\Rep(Q,(U_a)_{a\in Q_0})$ restricts to a representation $W|_{Q'}\in\Rep(Q',(U_a)_{a\in Q'_0})$.

If $W\in\Rep(Q,V)$ and $S\subseteq\overline V$, a \defn{subrepresentation} of $W$ is a representation $W'\in\Rep(Q,(S_a)_{a\in Q_0})$ such that each arrow $\alpha\in Q_1$, $W_\alpha':S_{s(\alpha)}\to S_{t(\alpha)}$ is the restriction of the map $W_\alpha:V_{s(\alpha)}\to V_{t(\alpha)}$. Note that if $S$ is closed under the maps $(W_\alpha)_{\alpha\in Q_1}$, then $W$ induces a subrepresentation $W'\in\Rep(Q,(S_a)_{a\in Q_0})$.

\subsection{Encoding quiver representations as graphs}
\label{subsec:encoding_representations}

Fix a finite quiver $Q$ and a $Q_0$-indexed tuple $(V_a)_{a\in Q_0}$ of finite disjoint nonempty sets. To algebraically enumerate the eventually constant representations of $Q$, we first define an ambient graph $G(Q)$. Then, we identify the representations with subgraphs of $G(Q)$ so that we can weigh them. 

Let $G(Q)$ be a directed multigraph on the vertex set $\overline V = \coprod_{a\in Q_0}V_a$. For each arrow $\alpha:a\to b$ in $Q_1$, let $G(Q)$ contain a set of $\alpha$-indexed edges from each vertex in $V_a$ to each vertex in $V_b$,
\[ \left\{ (u,v)^{(\alpha)} : u\in V_a,\,v\in V_b \right\}, \]
where each edge $(u,v)^{(\alpha)}$ is directed from $u$ to $v$. For example, if $Q$ has a single vertex $a$ and a single loop at $a$, then $G(Q)$ is the complete graph on $V_a$. See Figure~\ref{fig_00021}.

\begin{figure}[htbp]
    \centering
    \begin{tikzpicture}[scale=1, transform shape, baseline=(current bounding box.center),
        quiver_node/.style={circle, draw=black!70, fill=brown!15, very thick, minimum size=8.5mm, inner sep=0.5pt, font=\large},
        edge_b/.style={->, >={Stealth[length=2.5mm]}, blue!75!black, very thick},
        edge_g/.style={->, >={Stealth[length=2.5mm]}, green!50!black, very thick, dashed},
        edge_r/.style={->, >={Stealth[length=2.5mm]}, red!75!black, very thick, dotted}
        ]
        
        \node[quiver_node] (q1) at (0,0) {$1$};
        \node[quiver_node] (q2) at (2.5,0) {$2$};

        \draw[edge_b] (q1) edge[bend left=30] node[midway, above=1mm] {$\alpha$} (q2);
        \draw[edge_g] (q2) edge[bend left=30] node[midway, below=1mm] {$\beta$} (q1);
        \draw[edge_r] (q2) edge[loop right, in=330, out=30, looseness=8, min distance=12mm] node[midway, right=1mm] {$\gamma$} (q2);
    \end{tikzpicture}
    \hspace{1.5cm}
    \begin{tikzpicture}[scale=0.9, transform shape, baseline=(current bounding box.center),
        quiver_node/.style={circle, draw=black!70, fill=brown!15, very thick, minimum size=8.5mm, inner sep=0.5pt, font=\large},
        edge_b/.style={->, >={Stealth[length=2.5mm]}, blue!75!black, very thick},
        edge_g/.style={->, >={Stealth[length=2.5mm]}, green!50!black, very thick, dashed},
        edge_r/.style={->, >={Stealth[length=2.5mm]}, red!75!black, very thick, dotted}
        ]

        \node[quiver_node] (x1) at (0, 1.5) {$x_1$};
        \node[quiver_node] (x2) at (0, 0) {$x_2$};
        \node[quiver_node] (y1) at (4, 0.90) {$y_1$};
        \node[quiver_node] (x3) at (0, -1.5) {$x_3$};
        \node[quiver_node] (y2) at (4, -0.90) {$y_2$};

        \draw[edge_b] (x1) edge[bend left=16] (y1);
        \draw[edge_b] (x1) edge[bend left=10] (y2);
        \draw[edge_b] (x2) edge[bend left=4] (y1);
        \draw[edge_b] (x2) edge[bend left=4] (y2);
        \draw[edge_b] (x3) edge[bend left=4] (y1);
        \draw[edge_b] (x3) edge[bend left=4] (y2);

        \draw[edge_g] (y1) edge[bend right=3] (x1);
        \draw[edge_g] (y1) edge[bend left=6] (x2);
        \draw[edge_g] (y1) edge[bend left=6] (x3);
        \draw[edge_g] (y2) edge[bend left=5] (x1);
        \draw[edge_g] (y2) edge[bend left=8] (x2);
        \draw[edge_g] (y2) edge[bend left=15] (x3);

        \draw[edge_r] (y1) edge[loop right, in=330, out=30, looseness=8, min distance=12mm] (y1);
        \draw[edge_r] (y2) edge[loop right, in=330, out=30, looseness=8, min distance=12mm] (y2);
        
        \draw[edge_r] (y1) edge[bend left=15] (y2);
        \draw[edge_r] (y2) edge[bend left=15] (y1);
    \end{tikzpicture}
    
    \caption{On the left is a quiver $Q$ with two vertices $\{1,2\}$ and arrows $\alpha:1\to 2$, $\beta:2\to 1$, and $\gamma:2\to 2$. On the right is the graph $G(Q)$ on the sets $V_1=\{x_1, x_2, x_3\}$ and $V_2 = \{y_1,y_2\}$. The solid blue, dashed green, and dotted red arrows in the quiver induce edges depicted in the same way in the graph $G(Q)$.}
    \label{fig_00021}
\end{figure}

Consider any subset $U\subseteq\overline V$ and representation $W\in\Rep(Q,(U_a)_{a\in Q_0})$. Associate $W$ with the subgraph $G(W)$ of $G(Q)$ on the vertex set $U$ with the disjoint union of edges
\[ \coprod_{\alpha\in Q_1}\left\{ (u,W_\alpha(u))^{(\alpha)} : u\in U_a \right\}.\]
Essentially, $G(W)$ encodes each function $W_\alpha:W_{s(\alpha)}\to W_{t(\alpha)}$ by the edge set, $\{(u,W_\alpha(u))^{(\alpha)} : u\in U_a\}$, which corresponds to its standard set-theoretic graph labeled by $\alpha$. Let $\mathcal R(U)$ denote the set of graphs induced by $\Rep(Q,(U_a)_{a\in Q_0})$,
\[\mathcal R(U):=\{G(W) : W\in\Rep(Q,(U_a)_{a\in Q_0})\}.\]

Fix subsets $S\subseteq U\subseteq\overline V$ and a representation $W\in\Rep(Q,(U_a)_{a\in Q_0})$. Then $W$ shares properties with its graph $G(W)$. The set $S$ is closed under the maps $(W_\alpha)_{\alpha\in Q_1}$ if and only if $S$ is closed under successors in $G(W)$. Also, the graph $G(W)$ has a walk
\begin{equation}\label{walk_in_mathcal F}
    (u_1,u_2)^{(\alpha_1)},\,(u_2,u_3)^{(\alpha_2)},\,\ldots,\,(u_{\ell},u_{\ell+1})^{(\alpha_\ell)}
\end{equation}
if and only if $p := \alpha_\ell\cdots \alpha_1$ is a path in $Q$, $u_1\in U_{s(\alpha_1)}$, and each $u_{i+1}=W_{\alpha_i\cdots \alpha_1}(u_1)$. So if $W\in\EC_S(U)$, then $S$ is closed under successors in $G(W)$ and all sufficiently long walks in $G(W)$ have target in $S$. The converse holds similarly.

\begin{example}\label{Ex:3-cycle_quiver_graph}
    Recall the $3$-cycle quiver $Q$. The graph $G(Q)$ has the vertex set
    \[ \coprod_{i=1}^3 V_i = \{x_1, x_2, x_3, y_1, y_2, y_3, z_1, z_2, z_3\}. \]
    The graph $G(Q)$ has the edges $(x_q,y_r)^{(\alpha_1)}$, $(y_q,z_r)^{(\alpha_2)}$, and $(z_q,x_r)^{(\alpha_3)}$ as $q, r$ range over $\{1,2,3\}$. For each representation $W$, the graph $G(W)$ contains the edges $(x_q, W_{\alpha_1}(x_q))^{(\alpha_1)}$, $(y_q, W_{\alpha_2}(y_q))^{(\alpha_2)}$, and $(z_q, W_{\alpha_3}(z_q))^{(\alpha_3)}$ for $q \in \{1,2,3\}$. See Figure~\ref{fig_00020}.

    Transversals are subsets $C:=\{x_{i_1}, y_{i_2}, z_{i_3}\}$. A representation $W$ stabilizes in a transversal $C \subseteq \coprod_{i=1}^3 V_i$ if for all sufficiently long paths $p$ in $Q$, the map $W_p$ maps into $C$, or equivalently, if all sufficiently long walks in $G(W)$ end in $C$. This occurs if and only if the composite map $W_{\alpha_3}\circ W_{\alpha_2}\circ W_{\alpha_1}$ is eventually constant on $V_1$ to the vertex $x_{i_1}$ in the classical sense.
\end{example}

\begin{figure}[htbp]
    \centering
    \begin{tikzpicture}[scale=1, transform shape, baseline=(current bounding box.center),
        quiver_node/.style={circle, draw=black!70, fill=orange!15, very thick, minimum size=8.5mm, inner sep=0.5pt, font=\large},
        edge_f1/.style={->, >={Stealth[length=3mm]}, blue!75!black, very thick},
        edge_f2/.style={->, >={Stealth[length=2.5mm]}, green!50!black, very thick, dashed},
        edge_f3/.style={->, >={Stealth[length=3mm]}, red!75!black, very thick, dotted}
        ]

        \node[quiver_node] (q1) at (0, 1.5) {$1$};
        \node[quiver_node] (q2) at (1.5, -1) {$2$};
        \node[quiver_node] (q3) at (-1.5, -1) {$3$};

        \draw[edge_f1] (q1) edge[bend left=15] node[midway, right=1mm] {$\alpha_1$} (q2);
        \draw[edge_f2] (q2) edge[bend left=15] node[midway, below=1mm] {$\alpha_2$} (q3);
        \draw[edge_f3] (q3) edge[bend left=15] node[midway, left=1mm] {$\alpha_3$} (q1);
    \end{tikzpicture}
    \hspace{1.5cm}
    \begin{tikzpicture}[scale=0.85, transform shape, baseline=(current bounding box.center),
        >={Stealth[length=2.5mm]},
        domain_node/.style={circle, draw=black!70, fill=orange!15, very thick, minimum size=8.5mm, inner sep=0.5pt, font=\large},
        transversal_node/.style={circle, draw=MidnightBlue, fill=blue!15, very thick, minimum size=8.5mm, inner sep=0.5pt, font=\large},
        edge_f1/.style={->, >={Stealth[length=3mm]}, blue!75!black, very thick}, 
        edge_f2/.style={->, >={Stealth[length=2.5mm]}, green!50!black, very thick, dashed}, 
        edge_f3/.style={->, >={Stealth[length=3mm]}, red!75!black, very thick, dotted} 
        ]

        \node[transversal_node] (u1) at (-3, -1.5) {$x_1$};
        \node[transversal_node] (u2) at (0, 0.25) {$y_1$};
        \node[transversal_node] (u3) at (3, -1.5) {$z_1$};

        \node[domain_node] (x2) at (-3, 0) {$x_2$};
        \node[domain_node] (y2) at (0, 1.50) {$y_2$};
        \node[domain_node] (z2) at (3, 0) {$z_2$};

        \node[domain_node] (x3) at (-3, 1.5) {$x_3$};
        \node[domain_node] (y3) at (0, 2.75) {$y_3$};
        \node[domain_node] (z3) at (3, 1.5) {$z_3$};

        \draw[edge_f1] (u1) edge[bend left=15] (u2);
        \draw[edge_f1] (x2) edge[bend left=15] (u2);
        \draw[edge_f1] (x3) edge[bend left=15] (y2);

        \draw[edge_f2] (u2) edge[bend left=15] (u3);
        \draw[edge_f2] (y2) edge[bend left=10] (z2);
        \draw[edge_f2] (y3) edge[bend left=15] (z2);

        \draw[edge_f3] (u3) edge[bend left=15] (u1);
        \draw[edge_f3] (z2) edge[bend left=17] (x2);
        \draw[edge_f3] (z3) edge[bend left=15] (u1);

    \end{tikzpicture}
    
    \caption{On the left is the $3$-cycle quiver $Q$. On the right is the graph $G(W)$ of a representation $W\in\Rep(Q,V)$, with $V:=\{x_i\}_{i\leq 3}\sqcup\{y_j\}_{j\leq 3}\sqcup\{z_k\}_{k\leq 3}$. The edges induced by $W_{\alpha_1}:V_1\to V_2$ are solid blue, the edges induced by $W_{\alpha_2}:V_2\to V_3$ are dashed green, and the edges induced by $W_{\alpha_3}:V_3\to V_1$ are dotted red. The representation stabilizes in the transversal $\{x_1,y_1,z_1\}$.}
    \label{fig_00020}
\end{figure}

It will be convenient to expand the graph assignment $W\mapsto G(W)$ from the set of representations to slightly more general tuples of $Q_1$-indexed functions.
Given subsets $T\subseteq U\subseteq \overline V$, assign each $Q_1$-indexed tuple
\[W:=(W_\alpha:T_{s(\alpha)}\to U_{t(\alpha)})_{\alpha\in Q_1}\]
to the graph $G(W)$ on $U$ with the edge set
\[
\coprod_{\alpha\in Q_1}\{(u,W_\alpha(u))^{(\alpha)} : u\in T_{s(\alpha)}\subseteq U_{s(\alpha)}\}.
\]
For all subsets $T,S\subseteq U$, let $\mathcal R_U(T\to S)$ be the set of subgraphs $G(W)$ of $G(Q)$ on $U$ formed by the tuples $(W_\alpha:T_{s(\alpha)}\to U_{t(\alpha)})_{\alpha\in Q_1}$ such that $\im W_\alpha\subseteq S_{t(\alpha)}$ for each arrow $\alpha\in Q_1$.
We say a graph $H$ on $U$ is \defn{representable} if $H\in\mathcal R_U(T\to U)$ for some subset $T\subseteq U$; $H$ is represented by the tuple $W$ such that $G(W)=H$. Note that $\mathcal R(U)=\mathcal R_U(U\to U)$.

For instance, if $Q$ is the $3$-cycle quiver, graphs in $\mathcal{R}_U(T \to S)$ correspond to the tuples, $(W_{\alpha_1}, W_{\alpha_2}, W_{\alpha_3})$, where $W_{\alpha_1}: T_1 \to S_2$, $W_{\alpha_2}: T_2 \to S_3$, and $W_{\alpha_3}: T_3 \to S_1$. Equivalently, each graph $H\in\mathcal{R}_U(T \to S)$ contains exactly one outgoing edge from each vertex $u \in T_i$ targeting a vertex in $S_{i+1}$ (with indices modulo 3).

For clarity, we write each formal edge variable $x_{(u,v)^{(\alpha)}}$ as $x_{u,v}^{(\alpha)}$. For each vertex subset $U\subseteq \overline V$ and each graph $G(W)\in\mathcal R(U)$, the graph $G(W)$ has the edge weight
\[ {w}(G(W))=\prod_{\alpha\in Q_1}\prod_{u\in U_{s(\alpha)}} x^{(\alpha)}_{u,W_\alpha(u)}\in R_{G(Q)_E}.\]
Specialized to target variables, the weight is
\[w(G(W))=\prod_{\alpha\in Q_1}\prod_{u\in U_{s(\alpha)}} x_{W_\alpha(u)}\in R_{\overline V}.\]
Then, let $P(\mathbf x)$ be the generating function over the eventually constant representations,
\[P(\mathbf x) :=\sum_{W\in\EC}{w(G(W))}.\]
More generally, for each vertex subset $U\subseteq \overline V$, let $P_U(\mathbf x)$ denote the generating function over the eventually constant representations on $U$.

\subsection{Cardinality vectors and attaching cardinality matrices}
\label{subsec:multinomial_notation}

As in Section~\ref{subsection_quivers}, fix a finite quiver $Q = (Q_0, Q_1)$ and a $Q_0$-indexed tuple $V:=(V_a)_{a\in Q_0}$ of finite, nonempty, disjoint sets.

Equip $\mathbb N^{Q_0}$ with the pointwise order, so that 
$\mathbf c\leq\mathbf d$ if and only if $c_a \leq d_a$ for each $a\in Q_0$. Define the factorial of $\mathbf c\in\mathbb N^{Q_0}$ as the product of its component factorials,
\[
\mathbf c!:=\prod_{a\in Q_0} c_a!\in\mathbb N.
\]
We extend to multinomials. If $\mathbf c\leq \mathbf d$, or more generally if $\mathbf c_1 + \ldots + \mathbf c_\ell = \mathbf d$, 
let
\[
\binom{\mathbf d}{\mathbf c}:=\frac{\mathbf d!}{\mathbf c!(\mathbf d-\mathbf c)!}=\prod_{a\in Q_0}\binom{d_a}{c_a}\in\mathbb N,
\qquad
\binom{\mathbf d}{\mathbf c_1, \ldots,\mathbf c_\ell}=\frac{\mathbf d!}{\mathbf c_1!\cdots\mathbf c_\ell!}\in\mathbb N.\]
Define the $L_1$-norm on $\mathbb N^{Q_0}$ by $|\mathbf c|:=\sum_{a\in Q_0} c_a\in\mathbb N$.

For each $n\in\mathbb N$, let $\mathbf n:=(n)_{a\in Q_0}\in\mathbb N^{Q_0}$. As established in Section~\ref{section_intro}, the \defn{cardinality vector} of a subset $U\subseteq\overline V$ is $\mathbf U  := (|U_a|)_{a \in Q_0} \in \mathbb{N}^{Q_0}$. Recall that $\mathbf{\overline V} = (|V_a|)_{a\in Q_0}$.
Let $[\mathbf 0,\mathbf{\overline V}]\subseteq\mathbb N^{Q_0}$ be the partially ordered interval between $\mathbf 0$ and $\mathbf{\overline V}$.
 
Next, we consider vectors in $R_{\overline V}^{Q_0}$. For each subset $S\subseteq V_a$, denote the elementary symmetric polynomial of $S$ of degree $1$ as $e_1(S):=\sum_{u\in S}x_u\in R_{\overline V}$.
Extending to subsets $U\subseteq\overline V$, define the polynomial-valued vector $\mathbf e_1(U)$ by
\[\mathbf e_1(U):=(e_1(U_a))_{a\in Q_0}\in R_{\overline V}^{Q_0}.\]
For any polynomial vector $\mathbf q\in R_{\overline V}^{Q_0}$ and any $\mathbf c\in\mathbb N^{Q_0}$, define exponentiation by $\mathbf c$ as
\[
\mathbf q^\mathbf c := 
\prod_{a\in Q_0} q_a^{c_a}  \in  R_{\overline V}.
\]
For example, for any subset $U\subseteq \overline V$, we have
\[\mathbf e_1(U)^\mathbf 1=\prod_{a\in Q_0}\mathbf e_1(U)_a=\prod_{a\in Q_0}\sum_{u\in U_a}x_u.\]

Define the strictly upper triangular \defn{attaching cardinality matrix} $N$. It is indexed on $[\mathbf 0,\mathbf{\overline V}]^2$ with entries in $\mathbb Q$ given by
\begin{equation}
   N_{\mathbf c,\mathbf d}=
    \begin{cases}
        \frac{(-1)^{|\mathbf d-\mathbf c|-1}}{(\mathbf d-\mathbf c)!}
        \mathbf c^{(\mathsf A(Q)(\mathbf d-\mathbf c))}
        &\text{if }\mathbf c<\mathbf d,\\
        \hspace{16mm} 0 & \text{otherwise.}
    \end{cases}
\end{equation}

\begin{example}
    Let $Q$ be the $3$-cycle quiver given in Example~\ref{ex:quiver_rep_3cycle}. The cardinality vector of $V=(V_1,V_2,V_3)$ is given by $\mathbf{\overline V} = (3, 3, 3) \in \mathbb{N}^3$.
    
    The attaching cardinality matrix $N$ is indexed on the interval $[\mathbf{0}, \mathbf{\overline V}]$, which contains exactly $(3+1)^3 = 64$ vectors. To illustrate the multinomial notation, we have $\mathbf{\overline V}! = 3! \cdot 3! \cdot 3! = 216$. Also, $\mathbf e_1(\coprod_{i=1}^3V_i)^\mathbf 2=(x_1+x_2+x_3)^2(y_1+y_2+y_3)^2(z_1+z_2+z_3)^2$.
    
    For any vectors $\mathbf{c} = (c_1, c_2, c_3)$ and $\mathbf{d} = (d_1, d_2, d_3)$ such that $\mathbf{c} < \mathbf{d} \leq \mathbf{\overline V}$, we compute the exponent vector:
    \[\mathsf{A}(Q) (\mathbf{d} - \mathbf{c}) =
    \begin{pmatrix} 0 & 0 & 1 \\ 1 & 0 & 0 \\ 0 & 1 & 0 
    \end{pmatrix}(\mathbf d-\mathbf c)= \begin{pmatrix} d_3 - c_3 \\ d_1 - c_1 \\ d_2 - c_2 \end{pmatrix}.\]
    Therefore, the vector exponentiation evaluates to 
    \[\mathbf{c}^{(\mathsf{A}(Q)(\mathbf{d}-\mathbf{c}))} = c_1^{d_3-c_3} c_2^{d_1-c_1} c_3^{d_2-c_2}.\]
    The non-zero entries of the matrix $N$ are thus
    \begin{equation}
        N_{\mathbf{c}, \mathbf{d}} = \frac{(-1)^{(d_1-c_1) + (d_2-c_2) + (d_3-c_3) - 1}}{(d_1-c_1)! (d_2-c_2)! (d_3-c_3)!} c_1^{d_3-c_3} c_2^{d_1-c_1} c_3^{d_2-c_2}.
    \end{equation}
\end{example}

\section{Weighted enumeration of directed acyclic graphs}
\label{sec:weighted_enumeration_source_factorizable_DAGs}

Recall the constructions in Sections~\ref{subsection_intro_graphs} and~\ref{subsection_stability_source_factor}. Let $\mathcal D$ denote a fixed source-factorizable DAG assignment on a finite graph $G$. We derive a weighted enumeration formula for DAGs in $\mathcal D$. The formula follows from a recursive expression for DAG weights in terms of attaching weights.
To establish this recursion, we make two reductions. In Theorem~\ref{thm_source_factorizable_matrix_enumerator}, we decompose DAG weights into sums over the weights $w(\mathcal B_S(T,U))$, and in Lemma~\ref{Expressing w(DUS) by w(BSTU)-Prop}, we factor the weights ${w}(\mathcal B_S(T,U))$ by attaching weights.

\begin{lemma}
\label{Expressing w(DUS) by w(BSTU)-Prop}
   Let $\mathcal D$ be a source-factorizable DAG assignment on a graph $G$. For any subsets $S,\,T,\,U\subseteq G_V$, the weight of $\mathcal B_S(T,U)$ factors as
  \begin{equation}\label{Weighing BR(T,U) uniformly}
    \begin{aligned}
         {w}(\mathcal B_S(T,U))= {A}({T},U)\cdot  {w}(\mathcal D_S({U\setminus T})).
    \end{aligned}
    \end{equation}
\end{lemma}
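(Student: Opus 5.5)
The plan is to construct an explicit weight-preserving bijection
\[
\Phi:\mathcal B_S(T,U)\longrightarrow \mathcal A(T,U)\times \mathcal D_S(U\setminus T),\qquad H\longmapsto \bigl(H^{\out}_T,\;H|_{U\setminus T}\bigr),
\]
where $H^{\out}_T$ is the spanning subgraph of $H$ on $U$ consisting of the edges with source in $T$. The inverse will be $(H',K)\mapsto H'\star K$. Since edge sets are disjoint, ${w}(H'\star K)={w}(H'){w}(K)$. Summing over the product set then gives ${w}(\mathcal B_S(T,U))={A}(T,U)\,{w}(\mathcal D_S(U\setminus T))$. The degenerate cases are handled first. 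If $T\not\subseteq U$, I read $\mathcal B$ and ${A}$ with the paper's implicit convention $T\subseteq U$. If $S\not\subseteq U\setminus T$, both sides vanish: the left side by the remark after \eqref{eqn_B_S_TU}, and the right side because $\mathcal D_S(U\setminus T)=\emptyset$.

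\emph{Step 1 (decomposition of $H$).} Let $H\in\mathcal B_S(T,U)$. Every vertex of $T$ lies in $\Nss(H)$, so no edge of $H$ has target in $T$. Hence every edge has either source in $T$ and target in $U\setminus T$, or both endpoints in $U\setminus T$. This gives $H=H^{\out}_T\star H|_{U\setminus T}$. By Condition~\ref{Leaf-deletion condition}, $H|_{U\setminus T}\in\mathcal D(U\setminus T)$. By definition, $H^{\out}_T$ has all edges from $T$ to $U\setminus T$, and attaching it to $H|_{U\setminus T}$ recovers $H\in\mathcal B(T,U)$. So $H^{\out}_T\in\mathcal A_{H|_{U\setminus T}}(T,U)=\mathcal A(T,U)$, using Condition~\ref{DAG independence condition}.

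\emph{Step 2 (sinks).} I will check that $\Snk(H)=\Snk(H|_{U\setminus T})$, which places $H|_{U\setminus T}$ in $\mathcal D_S(U\setminus T)$ and makes the sink condition match in both directions. Vertices of $T$ are not sinks of $H$ because they are non-sink sources. For $v\in U\setminus T$, every outgoing edge of $v$ in $H$ has target in $U\setminus T$, since nothing targets $T$. So the out-degree of $v$ is the same in $H$ and in $H|_{U\setminus T}$.

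\emph{Step 3 (inverse).} Given $H'\in\mathcal A(T,U)$ and $K\in\mathcal D_S(U\setminus T)$, Condition~\ref{DAG independence condition} gives $\mathcal A(T,U)=\mathcal A_K(T,U)$, so $H'\star K\in\mathcal B(T,U)$. Step 2 applied to $H'\star K$ shows its sink set is $\Snk(K)=S$. The two maps are mutually inverse: restriction and out-subgraph recover $K$ and $H'$, because the edges of $H'$ have source in $T$ and the edges of $K$ lie inside $U\setminus T$. Step 1 gives the other composite.

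I expect the main obstacle to be the sink bookkeeping in Step 2, in particular the claim that no edge enters $T$. That claim is exactly what makes $\Snk(H)$ and the decomposition compatible. Everything else is formal once Condition~\ref{DAG independence condition} lets us replace $\mathcal A_K$ by the single set $\mathcal A(T,U)$, independent of $K$, which is what lets the sum factor as a product.
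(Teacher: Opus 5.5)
Your proposal is correct and is essentially the paper's proof. It uses the same bijection between $\mathcal A(T,U)\times\mathcal D_S(U\setminus T)$ and $\mathcal B_S(T,U)$, given by $(H',H)\mapsto H'\star H$ with inverse $H\mapsto (H^{\out}_T, H|_{U\setminus T})$, together with the same observation that no edge enters $T$ (so the sinks are unchanged) and the same appeal to conditions (i) and (ii). Your separate treatment of the case $S\not\subseteq U\setminus T$ is harmless but unnecessary, since the bijection already covers empty sets, as the paper remarks.
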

\begin{proof}
We construct a weight preserving bijection
\begin{equation}\label{Uniform attaching bijection}
    \mathcal A(T,U)\times\mathcal D_S({U\setminus T})\stackrel{\simeq}{\longrightarrow}
    \mathcal B_S(T,U),
\end{equation}
given by the map $(H',H)\mapsto H'\star H$, with inverse $B\mapsto(B^\out_T, B|_{U \setminus T})$.

Fix any pair $(H',H)\in\mathcal A(T,U)\times\mathcal D_S({U\setminus T})$. Then $H'\star H\in\mathcal B(T,U)$ by definition of $\mathcal A(T,U)$. The sinks in $H'\star H$ are the sinks in $H$, since $H'$ contributes outgoing edges for all vertices in $T$ and no outgoing edges for all vertices in $U\setminus T$. Thus $\Snk(H'\star H)=\Snk(H)=S$, so $H'\star H\in\mathcal B_S(T,U)$. Also
\[(H',H)=((H'\star H)^\out_T,(H'\star H)|_{U\setminus T}),\]
since $H$ contributes no outgoing edges in $T$, and all edges of $H'$ start in $T$.

Fix any $H \in \mathcal{B}_S(T,U)$. Since $T \subseteq \Nss(H)$, all edges of $H$ terminate in $U\setminus T$. So ${U \setminus T}$ is closed under successors in $H$, $H = H^\out_T \star H|_{U \setminus T}$, and $H$, $H|_{U \setminus T}$ have the same outgoing edges on $U\setminus T$. Therefore, $H$ and $H|_{U \setminus T}$ share sinks on $U\setminus T$, so $\Snk(H|_{U \setminus T})=\Snk(H)=S$. By source-factorizability, $H|_{U \setminus T} \in \mathcal{D}(U \setminus T)$, so $H|_{U \setminus T} \in \mathcal{D}_S(U \setminus T)$. Since edges of $H^\out_T$ terminate in $U\setminus T$ and $H^\out_T \star H|_{U \setminus T}=H\in\mathcal B(T,U)$, we have $H^\out_T\in \mathcal{A}(T,U)$.

The graphs in each pair $(H',H)\in \mathcal A(T,U)\times\mathcal D_S({U\setminus T})$ have disjoint edges, so the isomorphism in~\eqref{Uniform attaching bijection} gives
    \begin{equation*}
         {w}(\mathcal B_S(T,U))=\sum_{H'\in\mathcal A(T,U)}\sum_{H\in\mathcal D_S({U\setminus T})} {w}(H') {w}(H)= {A}({T},U)\cdot  {w}(\mathcal D_S({U\setminus T})).
    \end{equation*}
    
    Observe that the proof still holds even if any of the sets $\mathcal B_S(T,U)$, $\mathcal A(T,U)$, or $\mathcal D_S(U\setminus T)$ is empty, which occurs
in particular when $S\not\subseteq U\setminus T$.
\end{proof}

Recall the attaching weight matrix $ {M}$ given by \eqref{eq_W}, and let $I$ denote the identity matrix. Since $ {M}$ is strictly upper triangular, $ {M}$ is nilpotent and $(I- {M})^{-1}$ is invertible and expands as a finite geometric sum in $ {M}$.

\begin{theorem}\label{thm_source_factorizable_matrix_enumerator}
    Let $\mathcal D$ be source-factorizable on a graph $G$. For any vertex subsets $S,\,U\subseteq G_V$, we have
\begin{equation}\label{U,R DAG weight}
         {w}(\mathcal D_S(U)) = (I- {M})^{-1}_{S,U}.
    \end{equation}
\end{theorem}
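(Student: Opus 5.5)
The plan is to show that the column vector $x_U := w(\mathcal D_S(U))$ (for fixed $S$, indexed by $U$) satisfies the recursion $x = e_S + x M$ read row-wise. Equivalently, writing $F_{S,U}:=w(\mathcal D_S(U))$, the goal is
\[
F_{S,U} = \delta_{S,U} + \sum_{T\subsetneq U} F_{S,T}\,M_{T,U},
\]
that is, $F(I-M)=I$. Since $I-M$ is invertible, this gives $F=(I-M)^{-1}$. So the whole proof reduces to establishing this identity by inclusion–exclusion over non-sink sources, with Lemma~\ref{Expressing w(DUS) by w(BSTU)-Prop} supplying the factorization.

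First handle the case $S=U$. A DAG on $U$ whose sink set is all of $U$ has no edges, so $\mathcal D_U(U)$ is exactly the edgeless graph. By Condition~\ref{Nontriviality_condition} it has weight $1$, and the sum is empty because $F_{U,T}=0$ for $T\subsetneq U$. For $S\not\subseteq U$ both sides vanish.

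Now suppose $S\subsetneq U$. Every $H\in\mathcal D_S(U)$ has at least one edge, and since $H$ is a finite DAG with edges it has at least one source that is not a sink. Hence $\Nss(H)\neq\emptyset$. By inclusion–exclusion over nonempty subsets $T\subseteq \Nss(H)$, using $\sum_{\emptyset\ne T\subseteq X}(-1)^{|T|-1}=1$ for $X\neq\emptyset$, we get
\[
F_{S,U}=\sum_{H\in\mathcal D_S(U)} w(H)\sum_{\emptyset\neq T\subseteq\Nss(H)}(-1)^{|T|-1}=\sum_{\emptyset\neq T\subseteq U}(-1)^{|T|-1} w(\mathcal B_S(T,U)).
\]
Lemma~\ref{Expressing w(DUS) by w(BSTU)-Prop} turns each term into $A(T,U)\,F_{S,U\setminus T}$. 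Reindexing by $T' = U\setminus T\subsetneq U$ then gives $\sum_{T'\subsetneq U} F_{S,T'}\,(-1)^{|U\setminus T'|-1}A(U\setminus T',U)=\sum_{T'}F_{S,T'}M_{T',U}$, which is the identity we need.

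The main point needing care is the second step. We must check that $\Nss(H)\ne\emptyset$ exactly when $S\ne U$, and that terms with $T$ containing sinks or vertices of $S$ vanish automatically. They do: $\mathcal B_S(T,U)=\emptyset$ unless $T\subseteq \Nss$, and Lemma~\ref{Expressing w(DUS) by w(BSTU)-Prop} already covers empty cases. The remaining work is bookkeeping of signs and of the matrix index convention, where the row index is the smaller set.
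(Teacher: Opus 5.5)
Your proposal is correct and follows the paper's proof. You use inclusion--exclusion over nonempty $T\subseteq\Nss(H)$ to rewrite $w(\mathcal D_S(U))$ as an alternating sum of $w(\mathcal B_S(T,U))$, factor each term with Lemma~\ref{Expressing w(DUS) by w(BSTU)-Prop}, and reindex by $T\mapsto U\setminus T$ to get $F=I+FM$; the paper does the same row by row as $\alpha=\beta+\alpha M$, and your handling of the cases $S=U$ and $S\not\subseteq U$ via $F_{S,T}=0$ instead of $\mathcal B_S(T,U)=\emptyset$ is only cosmetic.
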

    \begin{proof}
 Fix $S$ and construct two $\mathcal P(G_V)$-indexed row-vectors in $R_{G_E}$, a vector $\alpha$ to weigh each set $\mathcal D_S(U)$, and a vector $\beta$ to encode the boundary condition. For each $U\subseteq G_V$, define
    \[\alpha_U:= {w}(\mathcal D_S(U)),\quad \beta_U:=
    \begin{cases}
        1&\text{if } U=S,\\
        0&\text{otherwise}.
    \end{cases}\]

    For all subsets $U\subseteq G_V$, we prove the following assertion,
    \begin{equation}\label{Summing by leaf-inclusion}
    \alpha_U=\beta_{U}+\sum_{\emptyset\neq T\subseteq U}(-1)^{|T|-1} {w}(\mathcal B_S(T,U)).
    \end{equation}
    In the first case, assume $S$ is not a proper subset of $U$. Then $\mathcal B_S(T,U)$ is empty for each nonempty subset $T\subseteq U$ since $S\not\subseteq U\setminus T$, so \eqref{Summing by leaf-inclusion} reduces to the assertion $\alpha_U=\beta_{U}$. If $S=U$, then $\mathcal D_S(U)$ contains only the edgeless graph, which has weight $1$. So, $\alpha_U=1=\beta_U$. If $S$ is not contained in $U$, then $\mathcal D_S(U)$ is empty, thus $\alpha_U=0=\beta_U$.
    
    In the remaining case, assume $S\subsetneq U$. Then all DAGs $H\in\mathcal D_S(U)$ have non-sink sources; since $H$ is finite, acyclic, and has edges, it admits nontrivial walk $\omega$ of maximal length with starting vertex $v_1$. Since $\omega$ cannot be extended further, $v_1$ must be a source, and since $\omega$ is nontrivial, $v_1$ is not a sink, so $\Nss(H)$ is nonempty. Consequently, the following binomial expansion is $0$,
    \begin{equation*}
            \sum_{T\subseteq\Nss(H)}(-1)^{|T|}=\sum_{i=0}^{|\Nss(H)|}\binom{|\Nss(H)|}{i}(-1)^i=(1-1)^{|\Nss(H)|}=0,
    \end{equation*}
    which gives the identity
    \begin{equation}\label{Correction terms sum to 1}
            \sum_{\emptyset\neq T\subseteq \Nss(H)}(-1)^{|T|-1}=1.
    \end{equation}
    Thus, we can multiply each summand $ {w}(H)$ of $ {w}(\mathcal D_S(U))$ by~\eqref{Correction terms sum to 1}. We can then swap the order of summation to isolate the weights $ {w}(\mathcal B_S(T,U))$ and recover~\eqref{Summing by leaf-inclusion},
\[
\begin{aligned}
\alpha_U&=\beta_U+\sum_{H\in\mathcal D_S(U)}\left( {w}(H)\sum_{\emptyset\neq T\subseteq \Nss(H)}(-1)^{|T|-1}\right)\\
&=\beta_U+\sum_{\emptyset\neq T\subseteq U}\sum_{H:T\subseteq\Nss(H)}(-1)^{|T|-1} {w}(H)\\
&=\beta_U+\sum_{\emptyset\neq T\subseteq U}(-1)^{|T|-1} {w}(\mathcal B_S(T,U)).
\end{aligned}
\]

Now for each $U\subseteq G_V$, apply Lemma~\ref{Expressing w(DUS) by w(BSTU)-Prop} to~\eqref{Summing by leaf-inclusion} to factor each weight $ {w}(\mathcal B_S(T,U))$
\begin{equation}\label{factored_leaf_sum}
   \alpha_U=\beta_U+\sum_{\emptyset\neq T\subseteq U}\alpha_{U\setminus T}\left((-1)^{|T|-1}  {A}(T,U)\right).
\end{equation}
The summands in~\eqref{factored_leaf_sum} are indexed over the nonempty subsets $T\subseteq U$. We rearrange these terms into a sum over the proper subsets of $U$ by complementation,
\[\{T\subseteq U:T\neq\emptyset\}\to\{T\subsetneq U\},\quad T\mapsto U\setminus T.\]
Reindexing takes~\eqref{factored_leaf_sum} to a sum involving the weighted matrix $ {M}$,
\begin{align*}
\alpha_U&=\beta_U+\sum_{\emptyset\neq T\subseteq U}\alpha_{U\setminus T}\left((-1)^{|T|-1}  {A}(T,U)\right)\\
&=\beta_U+\sum_{T\subsetneq U}\alpha_T\left((-1)^{|U\setminus T|-1} {A}({U\setminus T},U)\right)\\
&=\beta_U+\sum_{T\subsetneq U}\alpha_T {M}_{T,U}\\
&=(\beta+\alpha  {M})_U.
\end{align*}
Since this holds for all $U\subseteq G_V$, we have $\alpha=\beta+\alpha  {M}$. This implies $\alpha=\beta(I- {M})^{-1}$, and thus the result,
    \[ {w}(\mathcal D_S(U))=(\beta(I- {M})^{-1})_U=(I- {M})^{-1}_{S,U}.\qedhere\]
\end{proof}

Since $M$ is strictly upper triangular and $(I-M)^{-1} = I + (I-M)^{-1}M$, we have the following recursive formula for all subsets $S\subseteq U\subseteq G_V$,
\begin{equation}\label{general_recursion_formula}
    w(\mathcal{D}_S(U)) = \delta_{S,U} + \sum_{S \subseteq T \subsetneq U} w(\mathcal{D}_S(T)) M_{T,U}.
\end{equation}
Here, $\delta_{S,U}$ is $1$ if $U=S$, and $\delta_{S,U}$ is $0$ otherwise.

\begin{remark}
\label{subsec:matrix_enumerators_chains}
Let $P$ be a finite partially ordered set. Let $X$ be a strictly upper triangular $P\times P$ matrix with entries in a ring. We recall the notation for chains in a poset from Section~\ref{subsec_partially_ordered_chains}.

As in~\cite[Theorem 4.7.1]{St12}, we make an elementary observation. For all integers $d\in\mathbb N$ and elements $b,\,c\in P$, each entry $(X^d)_{b,c}$ yields a sum over the $(d+1)$-tuples $\Gamma:=(b,\Gamma_1,\ldots,\Gamma_{d-1},c)$ in $P^{d+1}$ that start with $b$ and end with $c$,
\begin{equation}\label{matrix-power-evaluation}
    (X^d)_{b,c}=\sum_{\Gamma}X_{b,\Gamma_1}\cdots X_{\Gamma_{d-1},c}.
\end{equation}
By strict upper triangularity, summands over non-increasing tuples in~\eqref{matrix-power-evaluation} vanish. This leaves a sum over only the tuples $\Gamma$ that correspond to strictly increasing chains from $b$ to $c$ of length $d$.
Consequently, the geometric expansion of the matrix $(I-X)^{-1}$ yields the finite sum
\begin{equation}\label{matrix-chain-equivalence}
    (I-X)^{-1}_{b,c}=\sum_{\Gamma\in\mathcal C(b,c)}X_{b,\Gamma_1}\cdots X_{\Gamma_{L(\Gamma)-1},c}.
\end{equation}
We also can compress the matrix slightly. If $X[b,c]$ is the submatrix of $X$ indexed over the elements $y,\,z$ with $b\leq y,\,z\leq c$, then~\eqref{matrix-chain-equivalence} reduces to the evaluation
\begin{equation}\label{reduced-submatrix-chain-equivalence}
    (I-X[b,c])^{-1}_{b,c}=\sum_{\Gamma\in\mathcal C(b,c)}X_{b,\Gamma_1}\cdots X_{\Gamma_{L(\Gamma)-1},c}=(I-X)^{-1}_{b,c}.
\end{equation}
\end{remark}

The power set $\mathcal P(G_V)$ is finite and partially ordered, and the attaching weight matrix $M$ is strictly upper triangular, so we can apply~\eqref{matrix-chain-equivalence} to expand $(I- M)^{-1}$ by strictly increasing chains in the partially ordered power set $\mathcal P(G_V)$. By Theorem~\ref{thm_source_factorizable_matrix_enumerator}, we have the following.

\begin{corollary}\label{Hyperforest enumerator chain formulation-Prop}
    If $\mathcal D$ is source-factorizable on $G$, then for all subsets $S\subseteq U \subseteq G_V$, we have the following sum over chains in $\mathcal P(G_V)$,
    \begin{equation*}
    \begin{split}
         {w}(\mathcal D_S(U)) &= \sum_{\Gamma\in\mathcal{C}(S,U)}\prod_{\ell=1}^{L(\Gamma)} \left((-1)^{|\Gamma_\ell\setminus \Gamma_{\ell-1}|-1} {A}({\Gamma_\ell\setminus \Gamma_{\ell-1}},{\Gamma_{\ell}})\right)\\
        &=\sum_{\Gamma\in\mathcal{C}(S,U)}(-1)^{|U\setminus S|-L(\Gamma)}\prod_{\ell=1}^{L(\Gamma)}  {A}({\Gamma_\ell\setminus \Gamma_{\ell-1}},{\Gamma_{\ell}}).
    \end{split}
    \end{equation*}
\end{corollary}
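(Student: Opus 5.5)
The corollary is a direct combination of Theorem~\ref{thm_source_factorizable_matrix_enumerator} with the chain expansion of Remark~\ref{subsec:matrix_enumerators_chains}.

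First, by Theorem~\ref{thm_source_factorizable_matrix_enumerator}, $w(\mathcal D_S(U))=(I-M)^{-1}_{S,U}$. The power set $\mathcal P(G_V)$ is a finite poset under inclusion, and $M$ is strictly upper triangular with respect to this order, since $M_{T,U}=0$ unless $T\subsetneq U$. So \eqref{matrix-chain-equivalence} applies with $X=M$, $b=S$, $c=U$, giving
\[
(I-M)^{-1}_{S,U}=\sum_{\Gamma\in\mathcal C(S,U)}\prod_{\ell=1}^{L(\Gamma)}M_{\Gamma_{\ell-1},\Gamma_\ell}.
\]
When $S=U$, the only chain is the length-zero chain, whose empty product is $1$. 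This is consistent with $\mathcal D_U(U)$ consisting only of the edgeless graph.

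Next, along any strictly increasing chain we have $\Gamma_{\ell-1}\subsetneq\Gamma_\ell$. So by \eqref{eq_W} each factor equals $(-1)^{|\Gamma_\ell\setminus\Gamma_{\ell-1}|-1}A(\Gamma_\ell\setminus\Gamma_{\ell-1},\Gamma_\ell)$. This yields the first displayed equality.

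For the second equality, collect the signs. The chain $S=\Gamma_0\subsetneq\cdots\subsetneq\Gamma_{L(\Gamma)}=U$ is nested, so the differences $\Gamma_\ell\setminus\Gamma_{\ell-1}$ partition $U\setminus S$, and their cardinalities sum to $|U\setminus S|$. The total exponent is therefore
\[
\sum_{\ell=1}^{L(\Gamma)}\bigl(|\Gamma_\ell\setminus\Gamma_{\ell-1}|-1\bigr)=|U\setminus S|-L(\Gamma).
\]
Factoring $(-1)^{|U\setminus S|-L(\Gamma)}$ out of the product gives the stated form.

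None of these steps is a real obstacle. The only points to check are that the strict upper triangularity of $M$ matches the poset used for chains, so that non-increasing tuples drop out, and the telescoping of cardinalities in the sign computation.
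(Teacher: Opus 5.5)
Your proposal is correct and follows the paper's route: it applies the chain expansion \eqref{matrix-chain-equivalence} to the strictly upper triangular matrix $M$ on $\mathcal P(G_V)$ and combines it with Theorem~\ref{thm_source_factorizable_matrix_enumerator}. Your sign computation, using that the differences $\Gamma_\ell\setminus\Gamma_{\ell-1}$ partition $U\setminus S$, spells out the step the paper leaves implicit for the second equality.
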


\section{Eventually constant set-valued representations}
\label{sec:eventually_constant_set-valued_representations}

Fix a finite quiver $Q$, whose vertex set $Q_0$ indexes a tuple $(V_a)_{a \in Q_0}$ of finite, nonempty, disjoint sets, and let $\overline V$ be the disjoint union of $V_a$, where $a\in Q_0$. Assume $Q$ has no sinks and each vertex $a\in Q_0$ is the target of a path of length $\geq|Q_0|$.

\subsection{Representable graphs}
\label{subsec:applying_source_factorizability_to_quiver_reps}

In this section, we construct a DAG assignment and relate it to the set of stabilizing representations. Then, we prove the assignment is source-factorizable and calculate its attaching weights. We use the constructions on quivers given in Sections~\ref{subsection_stability_source_factor} and~\ref{subsection_quivers}, and the graph operations given in Section~\ref{subsection_intro_graphs}.

For each vertex subset $U\subseteq \overline V$, let $\DAG(U)$ be the set of directed acyclic subgraphs of $G(Q)$ on $U$. Recall $G(Q)$ is the graph on $\overline V$ with $Q_1$-labeled edges, and for each $U\subseteq\overline V$, $\mathbf U:=(|U_a|)_{a\in Q_0}\in\mathbb N^{Q_0}$. Observe $\mathbf U\geq\mathbf 1$ if and only if each set $U_a$ is nonempty, or equivalently if $U$ contains a transversal of $V$.

\begin{definition}
     We define a DAG assignment $\mathcal D$ on the graph $G(Q)$ as follows. For each subset $U\subseteq \overline V$, let $\mathcal D(U)\subseteq\DAG(U)$ be the set of representable directed acyclic subgraphs on $U$.
     Recall that a graph $H$ on $U$ is representable if there exists a subset $T\subseteq U$ and a tuple of functions $(W_\alpha:T_{s(\alpha)}\to U_{t(\alpha)})$ such that $H=G(W)$. The graphs induced by such tuples form the set $\mathcal R_U(T\to U)$, and when we restrict to such tuples with each $\im W_\alpha\subseteq S_{t(\alpha)}$, we have the set $\mathcal R_U(T\to S)$. Stated without reference to tuples, $H\in\mathcal R_U(T\to U)$ if for each arrow $\alpha\in  Q_1$ and each vertex $u\in T_{s(\alpha)}$, $H$ has a unique edge $(u,v)^{(\alpha)}$ to some vertex $v\in U_{t(\alpha)}$, and no other edges.
     
     When $\mathbf U\geq \mathbf 1$, then $H$ is representable if and only if there exists a representation $W\in\Rep(Q,(U_a))$ such that $H=G(W)^\out_T$. In this case, $G(W)^\out_T=G((W_\alpha|_{T_{s(\alpha)}})_{\alpha\in Q_1})$.
\end{definition}

Now, we isolate two graph-theoretic observations. First, let $H$ be a subgraph of $G(Q)$ and suppose $T\subseteq H_V$ is closed under successors. Then $H^\out_{H_V\setminus T}$ is acyclic if and only if all sufficiently long walks in $H$ have target in $T$. If $H^\out_{H_V\setminus T}$ is acyclic, then all sufficiently long walks in $H$ cannot be entirely contained in $H^\out_{H_V\setminus T}$. Such walks must eventually reach a vertex in $T$, and thus their target also lies in $T$ by closure. Conversely, if $H^\out_{H_V\setminus T}$ is not acyclic, then it contains a directed cycle. Since all edges in $H^\out_{H_V\setminus T}$ have their source in $H_V\setminus T$, this cycle cannot contain any vertex in $T$. Thus, $H$ admits arbitrarily long walks whose targets never enter $T$.

Next, consider any subsets $T\subseteq U\subseteq \overline V$ and $G(W)\in\mathcal R_U(T\to U)$. Since $Q$ has no sinks, $\Snk(G(W))=U\setminus T$; for each vertex $u\in T_a$, there exists an arrow $\alpha:a\to b$ in $Q$, so $G(W)$ has the outgoing edge $(u, W_\alpha(u))^{(\alpha)}$. Therefore, $\mathcal D_{U\setminus T}(U)=\mathcal D(U)\cap \mathcal R_U(T\to U)$.

By the following Lemma, representations that stabilize in a fixed subset $S\subseteq U$ decompose via a weight preserving bijection into a DAG in $\mathcal D _S(U)$ and a representation on $S$.

\begin{lemma}\label{lem:representations_decompose}
Fix subsets $S\subseteq U$ in $\overline V$ with $\mathbf 1\leq \mathbf S$. Then there is a weight preserving bijection between subgraphs of $G(Q)$,
\[
\{ G(W) : W\in\EC_S(U)\}\stackrel{\simeq}{\longrightarrow}\mathcal D _S(U)\times\mathcal R(S),
\]
    given by $G(W)\mapsto (G(W)^\out_{U\setminus S}, G(W)|_S)$, with inverse $(H,H')\mapsto H\star H'$. Thus,
    \begin{equation}
         \sum_{W\in\EC_S(U)}w(G(W))= {w}(\mathcal D _S(U)) {w}(\mathcal R(S)),
    \end{equation}
    which we abbreviate as $w(\EC_S(U))$.
\end{lemma}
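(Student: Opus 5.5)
The plan is to check directly that both maps land in the claimed sets and are mutually inverse; weight preservation then follows from edge-disjointness. Throughout, we use the characterization from Section~\ref{subsec:encoding_representations}: $W\in\EC_S(U)$ if and only if $S$ is closed under successors in $G(W)$ and all sufficiently long walks in $G(W)$ end in $S$. We also use the first graph-theoretic observation preceding the lemma. When $S$ is closed under successors in $H=G(W)$, the graph $H^\out_{U\setminus S}$ is acyclic if and only if all sufficiently long walks end in $S$.

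\emph{Forward map.} Let $W\in\EC_S(U)$. Since $S$ is closed under successors in $G(W)$, we have $G(W)=G(W)^\out_{U\setminus S}\star G(W)|_S$.
\begin{itemize}
\item The restriction $G(W)|_S$ consists of the edges $(u,W_\alpha(u))^{(\alpha)}$ with $u\in S$, and these targets lie in $S$ by closure. So $G(W)|_S=G(W')$ for the subrepresentation $W'\in\Rep(Q,(S_a))$, hence lies in $\mathcal R(S)$.
\item The graph $H:=G(W)^\out_{U\setminus S}$ equals $G((W_\alpha|_{(U\setminus S)_{s(\alpha)}})_\alpha)$, so it lies in $\mathcal R_U((U\setminus S)\to U)$ and is representable.
\item By the observation, $H$ is acyclic, so $H\in\mathcal D(U)$.
\item Since $Q$ has no sinks, $\Snk(H)=S$, and therefore $H\in\mathcal D_S(U)$.
\end{itemize}

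\emph{Inverse map.} Let $(H,H')\in\mathcal D_S(U)\times\mathcal R(S)$ with $H'=G(W')$. First, $H$ is representable with sink set $S$. By the remark $\mathcal D_{U\setminus T}(U)=\mathcal D(U)\cap\mathcal R_U(T\to U)$, we get $H\in\mathcal R_U((U\setminus S)\to U)$. So $H=G(W'')$ for a tuple $W''_\alpha:(U\setminus S)_{s(\alpha)}\to U_{t(\alpha)}$. Here I must make sure the tuple is pinned down by the sink set. A representable graph lies in $\mathcal R_U(T\to U)$ for a unique $T$, namely the set of non-sinks, again because $Q$ has no sinks.

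Gluing $W''_\alpha$ and $W'_\alpha$ gives maps $W_\alpha:U_{s(\alpha)}\to U_{t(\alpha)}$, so $W\in\Rep(Q,(U_a))$ with $G(W)=H\star H'$. The edge sets are disjoint because their sources lie in the disjoint sets $U\setminus S$ and $S$. The set $S$ is closed under $W$ since $W'$ maps $S$ into $S$. Moreover $G(W)^\out_{U\setminus S}=H$ is acyclic, so by the observation all sufficiently long walks end in $S$, and $W\in\EC_S(U)$. The hypothesis $\mathbf 1\le\mathbf S$ is used only so that $\mathcal R(S)$ consists of genuine representations on nonempty sets, with the same $U_a$-indexing.

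\emph{Bijectivity and weights.} The composites are the identity. On one side, $(H\star H')^\out_{U\setminus S}=H$ and $(H\star H')|_S=H'$, because all edges of $H'$ have source in $S$ and all edges of $H$ have source in $U\setminus S$. On the other side, $G(W)=G(W)^\out_{U\setminus S}\star G(W)|_S$ was shown above. The map $W\mapsto G(W)$ is injective, so we may sum over $W$. Weights are multiplicative over edge-disjoint unions, so summing gives $w(\EC_S(U))=w(\mathcal D_S(U))\,w(\mathcal R(S))$.

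I expect the main obstacle to be the inverse direction, where the DAG $H$ must have exactly one $\alpha$-edge out of each vertex of $U\setminus S$ for each arrow $\alpha$, and none out of $S$. That is what makes the glued maps total functions on $U$. It follows from $\Snk(H)=S$ together with representability and the absence of sinks in $Q$.
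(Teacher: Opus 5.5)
Your proposal is correct and follows the paper's proof essentially step for step. The forward map uses closure of $S$, acyclicity of $G(W)^\out_{U\setminus S}$, and $\Snk=S$ from the absence of sinks in $Q$; the inverse glues the tuple representing $H$ with the subrepresentation $W'$. Your explicit checks fill in details the paper leaves implicit: that the tuple representing $H$ is defined exactly on $U\setminus S$ (by uniqueness of the non-sink set), that the edge sets are disjoint, and that both composites are identities.
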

\begin{proof}
Fix any representation $W\in\EC_S(U)$. Since $W$ stabilizes in $S$, the set $S$ is closed under successors in $G(W)$, $G(W)=G(W)^\out_{U\setminus S}\star G(W)|_S$, and all sufficiently long walks in $G(W)$ have target in $S$. Thus $G(W)^\out_{U\setminus S}$ is acyclic. Since $Q$ has no sinks, we know $\Snk(G(W)^\out_{U\setminus S})=S$, so $G(W)^\out_{U\setminus S}\in\mathcal D _S(U)$. The maps $(W_\alpha)_{\alpha\in Q_1}$ are closed on $S$, so $W$ induces a subrepresentation in $\Rep(Q,(S_a)_{a\in Q_0})$, $W':=(W_\alpha':S_{s(\alpha)}\to S_{t(\alpha)})_{\alpha\in Q_1}$.
We then see $G(W)|_S=G(W')$, so $G(W)|_S\in\mathcal R(S)$.

Conversely, fix any pair $(G(W),G(W'))\in \mathcal D _S(U)\times\mathcal R(S)$, with $W'\in\Rep(Q,(S_a)_{a\in Q_0})$ and $W:=(W_\alpha:(U\setminus S)_a\to U_a)_{\alpha\in Q_1}$. For each arrow $\alpha\in Q_1$, the maps $W_\alpha$ and $W'_\alpha$ glue to form a map $X_\alpha:U_{s(\alpha)}\to U_{t(\alpha)}$, by
\[X_\alpha(u):=
\begin{cases}
    W_\alpha(u)&\text{if }u\in U_{s(\alpha)}\setminus S_{s(\alpha)},\\
    W'_\alpha(u)&\text{if }u\in S_{s(\alpha)}.
\end{cases}\]
Then, $X:=(X_\alpha)_{\alpha\in Q_1}$ is in $\Rep(Q,(U_a)_{a\in Q_0})$ and
$G(W)\star G(W')=G(X)$.
For each arrow $\alpha\in Q_1$, $X_\alpha$ extends $W'_\alpha$ on $S_{s(\alpha)}\subseteq U_{s(\alpha)}$, so $X_\alpha$ maps $S_{s(\alpha)}$ into $S_{t(\alpha)}$. Also, $S$ is closed on $G(X)$, $G(X)^\out_{U\setminus S}=G(W)$, and $G(W)$ is acyclic, so all sufficiently long walks in $G(X)$ have target in $S$. Thus $X$ stabilizes in $S$, so $X\in\EC_S(U)$.
\end{proof}

To make use of Lemma~\ref{lem:representations_decompose}, we need to weigh the sets $\mathcal R(S)$ and $\mathcal D _S(U)$. We next prove that $\mathcal D$ is source-factorizable so that we can decompose $w(\mathcal D _S(U))$ by its attaching weights.

Fix any subsets $T\subseteq U\subseteq \overline V$. For the nontriviality axiom, the edgeless graph is in the subset $\mathcal R_U(\emptyset\to U)$ of $\mathcal D(U)$.
For the source-removal axiom, we fix $G(W)\in\mathcal B(T,U)$ in $\mathcal R_U(S\to U)$ and need to show $G(W)|_{U\setminus T}\in\mathcal D(U\setminus T)$. First, $G(W)|_{U\setminus T}$ is acyclic since it is a subgraph of $G(W)$. Second, $T\subseteq\Nss(G(W))$, so the vertices in $T$ are sources in $G(W)$, so each map $W_\alpha:S_{s(\alpha)}\to U_{t(\alpha)}$ restricts to a map $W'_\alpha:(S\setminus T)_{s(\alpha)}\to(U\setminus T)_{t(\alpha)}$. This yields a tuple of functions $W':=(W'_\alpha)_{\alpha\in Q_1}$, and $G(W)|_{U\setminus T}=G(W')$, so $G(W)|_{U\setminus T}$ is representable. Thus $G(W)|_{U\setminus T}\in\mathcal D(U\setminus T)$.

The following lemma verifies the remaining uniformity axiom. This proves that $\mathcal D $ is source-factorizable and gives the form of the attaching sets.
\begin{lemma}\label{EC_is_source_factorizable}
For all subsets $T\subseteq U\subseteq \overline V$ and each DAG $H\in\mathcal D(U\setminus T)$, the attaching set $\mathcal A_H(T,U)$ is $\mathcal R_U(T\to U\setminus T)$.
\end{lemma}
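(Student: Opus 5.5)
We prove the two inclusions $\mathcal A_H(T,U)\subseteq\mathcal R_U(T\to U\setminus T)$ and $\mathcal R_U(T\to U\setminus T)\subseteq\mathcal A_H(T,U)$ for a fixed DAG $H\in\mathcal D(U\setminus T)$. Since the right-hand side visibly does not depend on $H$, this also yields Condition~\ref{DAG independence condition}.

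\emph{First inclusion.} Let $H'\in\mathcal A_H(T,U)$. By definition every edge of $H'$ has source in $T$ and target in $U\setminus T$, and $H'\star H\in\mathcal B(T,U)\subseteq\mathcal D(U)$. In particular $H'\star H$ is representable: it lies in $\mathcal R_U(T'\to U)$ for some $T'\subseteq U$. Hence for every $\alpha\in Q_1$ and every $u\in T'_{s(\alpha)}$ there is exactly one $\alpha$-labeled edge out of $u$, and there are no other edges.

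We next show $T\subseteq T'$. Since $T\subseteq\Nss(H'\star H)$, every $u\in T$ has an outgoing edge, so $u\in T'$. As $u\in T'$, for each arrow $\alpha$ with $s(\alpha)=a$ (where $u\in V_a$), the vertex $u$ has exactly one $\alpha$-edge in $H'\star H$. All edges out of $u$ come from $H'$, because $H$ lives on $U\setminus T$. Therefore, in $H'$, each $u\in T_a$ has exactly one $\alpha$-edge for each arrow $\alpha$ out of $a$, with target in $(U\setminus T)_{t(\alpha)}$, and $H'$ has no other edges. This says precisely that $H'=G(W)$ for a tuple $(W_\alpha:T_{s(\alpha)}\to (U\setminus T)_{t(\alpha)})$, that is, $H'\in\mathcal R_U(T\to U\setminus T)$.

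\emph{Second inclusion.} Let $H'=G(W)\in\mathcal R_U(T\to U\setminus T)$, and write $H\in\mathcal R_{U\setminus T}(T''\to U\setminus T)$ for its representing tuple. We check that $H'\star H\in\mathcal B(T,U)$ in four steps.
\begin{enumerate}[(a)]
\item The edge sets of $H'$ and $H$ are disjoint, since their sources lie in $T$ and in $U\setminus T$ respectively.
\item $H'\star H$ is representable via the glued tuple on $T\sqcup T''$, exactly as in the gluing argument of Lemma~\ref{lem:representations_decompose}.
\item $H'\star H$ is acyclic. No edge enters $T$, so any closed walk must avoid $T$ and would therefore be a closed walk in $H$, which is a DAG.
\item $T\subseteq\Nss(H'\star H)$. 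Vertices of $T$ receive no edges, and since $Q$ has no sinks each $u\in T_a$ has at least one outgoing edge.
\end{enumerate}
Together these give $H'\star H\in\mathcal D(U)$ with $T\subseteq\Nss(H'\star H)$, so $H'\in\mathcal A_H(T,U)$.

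The main subtlety is in the first inclusion: passing from an abstract representation index set $T'$ for $H'\star H$ to a tuple defined on all of $T$. This requires the observation that non-sink sources must belong to $T'$, and then that representability forces one edge per arrow at each such vertex. Note that the argument does not use $\mathbf U\ge\mathbf 1$, because representability is defined through partial tuples.
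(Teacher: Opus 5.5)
Your proof is correct and follows essentially the same route as the paper's. It uses the same two inclusions: glue the representing tuples and check $T\subseteq\Nss(H'\star H)$ and acyclicity in one direction, and in the other use representability of $H'\star H$, together with the fact that the non-sink vertices of $T$ must lie in the representing index set, to conclude $H'\in\mathcal R_U(T\to U\setminus T)$. The only cosmetic differences are that you exclude closed walks by noting that no edge enters $T$ (the paper instead bounds walk lengths), and that you obtain $T\subseteq T'$ directly rather than via $\Snk(H'\star H)=U\setminus T'$.
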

\begin{proof}
Suppose $S\subseteq U\setminus T$ is the subset such that $H\in\mathcal R_{U\setminus T}(S\to U\setminus T)$, and fix any $H'\in\mathcal R_U(T\to U\setminus T)$. Since $T$ and $S$ are disjoint, the tuples of maps representing $H$ and $H'$ glue to form a tuple representing $H'\star H$ as in Lemma~\ref{lem:representations_decompose}. Also, all edges in both $H'$ and $H$ have targets in $U\setminus T$, and each vertex in $T$ has an outgoing edge in $H'$, so $T\subseteq\Nss( H' \star H)$. Finally, $H' \star H$ is acyclic since walks are bounded in length; each walk enters $U\setminus T$ in at most one step and then follows a walk of bounded length in the acyclic graph $H$. So, $ H' \star H\in\mathcal B(T,U)$, and thus $ H' \in\mathcal A_H(T,U)$.

Conversely, let $H'\in\mathcal A_H(T,U)$. Then $H'\star H\in\mathcal B(T,U)$, so $H'\star H\in\mathcal R_U(S\to U)$ for some subset $S\subseteq U$. Since no vertices in $T$ are sinks in $H'\star H$, $T$ is disjoint from $\Snk(H'\star H)=U\setminus S$, so $T\subseteq S$. Thus for each arrow $\alpha\in Q_1$ and each vertex $u\in T_{s(\alpha)}$, $H'\star H$ has a single outgoing edge $(u,v)^{(\alpha)}$ to a vertex $v\in U_{t(\alpha)}\setminus T=(U\setminus T)_{t(\alpha)}$. This edge belongs to $H'$ since $u\not\in U\setminus T$, so $H'\in\mathcal R_U(T\to U\setminus T)$.
\end{proof}

Fix any subsets $S,\,T\subseteq U\subseteq\overline V$. It remains to calculate $\mathcal R(S)$ and the set of attaching graphs, $\mathcal A(T,U)$. Since $\mathcal R_S(S\to S)=\mathcal R(S)$ and $\mathcal R_U(T\to U\setminus T)=\mathcal A(T,U)$, we can determine both weights by calculating $w(\mathcal R_U(T\to S))$.

We decompose the graphs in $\mathcal R_U(T\to S)$ by the arrows $\alpha\in Q_1$.
For each arrow $\alpha\in Q_1$, associate each function $f:T_a\to S_b$ with its $\alpha$-indexed graph $f^{(\alpha)}$ on $U$ with the edge set
$\{ (u,f(u))^{(\alpha)} : u\in T_{a}\}$.
Define the set of graphs $\hom(T_a,S_b)^{(\alpha)}:=\{f^{(\alpha)} : f\in\hom(T_a,S_b)\}$ and note that its weight factors as
\begin{equation}\label{weight_edge_labeled_hom}
    {w}(\hom(T_{a},S_{b})^{(\alpha)})=\sum_{f:T_{a}\to S_{b}} \prod_{u\in T_{a}}x_{u,f(u)}^{(\alpha)}=\prod_{u\in T_{a}}\sum_{v\in S_{b}}x_{u,v}^{(\alpha)}.
\end{equation}

There is a weight preserving bijection,
\[\mathcal R_U(T\to S)\stackrel{\simeq}{\longrightarrow}\prod_{\alpha\in Q_1}\hom(T_{s(\alpha)},S_{t(\alpha)})^{(\alpha)},\]
by $G(W)\mapsto (W_{\alpha}^{(\alpha)})_{\alpha\in Q_1}$. Its inverse is given by sending $(f^{(\alpha)})_{\alpha\in Q_1}$ to the edge-disjoint union over all arrows $\alpha\in Q_1$.
Since the edges of each map $f^{(\alpha)}$ are pairwise disjoint, the weight factors as
\begin{equation}\label{R_U(S,T)_weight_edges}
\begin{aligned}
{w}(\mathcal R_U(T\to S))=\prod_{\alpha\in Q_1} {w}(\hom(T_{s(\alpha)},S_{t(\alpha)})^{(\alpha)})=\prod_{\alpha:a\to b}\prod_{u\in T_{a}}\sum_{v\in S_b}x_{u,v}^{(\alpha)}.
\end{aligned}
\end{equation}

Finally, we can weigh $\mathcal R(S)$ and $\mathcal A(T,U)$. Since $\mathcal R(S)=\mathcal R_S(S\to S)$, the weight of the representations on $S\subseteq \overline V$ is given by
\begin{equation}\label{directed_rep_weight}
     {w}(\mathcal R(S))=\prod_{\alpha:a\to b}\prod_{u\in S_{a}}\sum_{v\in S_b}x_{u,v}^{(\alpha)}.
\end{equation}
Since $\mathcal A(T,U)=\mathcal R_U(T\to U\setminus T)$, the attaching weight of $T\subseteq U\subseteq \overline V$ is given by
\begin{equation}\label{directed_attaching_weight}
    {A}(T,U)=w(\mathcal A(T,U))=\prod_{\alpha:a\to b}\prod_{u\in T_{a}}\sum_{v\in (U\setminus T)_{b}}x_{u,v}^{(\alpha)}.
\end{equation}

\subsection{Generating function in edge variables}
\label{subsec:gen_fun_edges}

In this section, we derive the generating function $P(\mathbf x)$ in directed edge variables. We use the notation established in Sections~\ref{subsection_stability_source_factor},~\ref{subsection_quivers}, and~\ref{subsec:multinomial_notation}, and the results from the above Section~\ref{subsec:applying_source_factorizability_to_quiver_reps}.

By~\eqref{directed_attaching_weight}, the entries of the attaching weight matrix $M$ are given by
\begin{equation}\label{eq_W_quiver_full_var}
     {M}_{T,U}=
    \begin{cases}
    (-1)^{|U\setminus T|-1}\prod_{\alpha:a\to b}\prod_{u\in (U\setminus T)_{a}}\sum_{v\in T_{b}}x_{u,v}^{(\alpha)}&\text{if } T\subsetneq U,\\
    \hspace{18mm} 0&\text{otherwise.}
    \end{cases}
\end{equation}

Consider any subsets $S\subseteq U\subseteq \overline V$. Theorem~\ref{thm_source_factorizable_matrix_enumerator} gives the weighted enumerator for $\mathcal D_S(U)$ in terms of the attaching weight matrix $M$,
and~\eqref{directed_rep_weight} gives the weight of $\mathcal R(S)$. So, by Lemma~\ref{lem:representations_decompose}, the weight of $\EC_S(U)$ is
\[ {w}(\EC_S(U))=\left(\prod_{\alpha:a\to b}\prod_{u\in S_{a}}\sum_{v\in S_b}x_{u,v}^{(\alpha)}\right)(I- {M})^{-1}_{S,U}.\]
Expanded by chains in the power set lattice $\mathcal P(\overline V)$, Corollary~\ref{Hyperforest enumerator chain formulation-Prop} gives
   \begin{equation*}
   \begin{aligned}
         {w}(\EC_S(U)) &= w(\mathcal R(S))\sum_{\Gamma\in\mathcal{C}(S,U)}(-1)^{|U\setminus S|-L(\Gamma)}\prod_{\ell=1}^{L(\Gamma)} {A}({\Gamma_\ell\setminus \Gamma_{\ell-1}},{\Gamma_{\ell}})\\
         &= \prod_{\alpha:a\to b}\prod_{u\in S_{a}}\sum_{v\in S_b}x_{u,v}^{(\alpha)}\sum_{\Gamma\in\mathcal{C}(S,U)}(-1)^{|U\setminus S|-L(\Gamma)}\prod_{\ell=1}^{L(\Gamma)} 
        \prod_{\alpha:a\to b}\prod_{u\in ({\Gamma_\ell\setminus \Gamma_{\ell-1}})_{a}}\sum_{v\in (\Gamma_{\ell-1})_{b}}x_{u,v}^{(\alpha)}.
    \end{aligned}
    \end{equation*}

To weigh $\EC$ in terms of the sets $\EC_S(U)$, we note that $\EC$ is partitioned by the sets
\[\{\EC_C(\overline V) : C \in \mathcal{T}(V)\}.\]
Consider any representation $W\in \EC$. For all sufficiently long paths $p$ in $Q$ with target $b$, $\im W_p$ is contained within a singleton of $\{c_b\}\subseteq V_b$. Also, this singleton is unique since sufficiently long paths with target $b$ exist by assumption on $Q$. The collection $(c_b)_{b\in Q_0}$ uniquely determines the transversal $C$ for which $W\in\EC_C(\overline V)$, verifying pairwise disjointness.

Consequently, the generating function expands as the sum over these transversals,
\begin{equation}\label{eq:transversals_partition_EC}
 {P}(\mathbf x)=\sum_{C\in\mathcal T(V)}w(\EC_{C}(\overline V))=\sum_{C\in\mathcal T(V)} \left(\prod_{\alpha:a\to b}x_{c_{a},c_{b}}^{(\alpha)}\right)(I- {M})^{-1}_{C,\overline V}.
\end{equation}

We now establish that the generating function $P(\mathbf{x})$ is invariant under permutations of the vertices within each set $V_a$. Let $\mathfrak{S}_{a}$ denote the symmetric group on the set $V_a$ for each vertex $a \in Q_0$, and let $\mathfrak{S} := \prod_{a \in Q_0} \mathfrak{S}_{a}$ denote the product group. The natural action of $\mathfrak S$ on the vertex set $\overline V$ is given by ${\sigma} \cdot u:= \sigma_a(u)$ for $u \in V_a$. This induces an action on the polynomial ring of edge variables $R_{G(Q)_E}$. For any ${\sigma} = (\sigma_a)_{a \in Q_0} \in \mathfrak{S}$ and any arrow $\alpha: a\to b$ in $Q_1$, this induced action is given by
\begin{equation}
    {\sigma} \cdot x_{u,v}^{(\alpha)} := x_{\sigma_a(u), \sigma_b(v)}^{(\alpha)}.
\end{equation}

In~\eqref{eq:transversals_partition_EC}, the generating function expands as a sum over the weights of the sets of representations stabilizing in each transversal. Fortunately, we do not have to calculate each of these weights independently, since for each transversal $C \in \mathcal{T}(V)$, ${\sigma} \cdot w(\EC_C(\overline V)) = w(\EC_{{\sigma}(C)}(\overline V))$. To see this, note that for all subsets $S, T \subseteq U \subseteq \overline V$
\begin{equation}
    {\sigma} \cdot w(\mathcal{R}_U(T \to S)) = \prod_{\alpha \in Q_1} \prod_{u \in T_{s(\alpha)}} \sum_{v \in S_{t(\alpha)}} x_{\sigma_{s(\alpha)}(u), \sigma_{t(\alpha)}(v)}^{(\alpha)} = w(\mathcal{R}_{{\sigma}(U)}({\sigma}(T) \to {\sigma}(S))).
\end{equation}
Thus, the action commutes with the attaching weights, yielding ${\sigma} \cdot A(U \setminus T, U) = A({\sigma}(U) \setminus {\sigma}(T), {\sigma}(U))$, and ${\sigma} \cdot w(\mathcal{R}(C)) = w(\mathcal{R}({\sigma}(C)))$. By the chain formulation, we have
\begin{align*}
    {\sigma} \cdot w(\EC_C(\overline V))&= w(\mathcal{R}({\sigma}(C))) \sum_{\Gamma \in \mathcal{C}(C, \overline V)} (-1)^{|\overline V \setminus C| - L(\Gamma)} \prod_{\ell=1}^{L(\Gamma)} A({\sigma}(\Gamma_\ell) \setminus {\sigma}(\Gamma_{\ell-1}), {\sigma}(\Gamma_\ell))\\
    &= w(\EC_{{\sigma}(C)}(\overline V)).
\end{align*}

The group $\mathfrak{S}$ acts transitively on the set of transversals $\mathcal{T}(V)$. Also, for each transversal $C$, a permutation ${\sigma} \in \mathfrak{S}$ fixes $C$ if and only if it fixes the single element $c_a$ in each set $V_a$. Thus, the size of the stabilizer of $C$ is $\prod_{a \in Q_0} (|V_a| - 1)! = (\mathbf{\overline V} - \mathbf{1})!$, where $\mathbf{1}=(1)_{a\in Q_0}$. By the orbit-stabilizer theorem, the generating function is an orbit sum over any fixed transversal $C_0 \in \mathcal{T}(V)$,
\begin{equation}\label{orbit_sum_edges}
    P(\mathbf{x}) = \frac{1}{(\mathbf{\overline V} - \mathbf{1})!} \sum_{{\sigma} \in \mathfrak{S}} {\sigma} \cdot w(\EC_{C_0}(\overline V)).
\end{equation}
Equivalently, we can write the generating function as a sum over the left cosets of the stabilizer,
\begin{equation}
    P(\mathbf{x}) = \sum_{\overline{{\sigma}} \in \mathfrak{S}/\text{Stab}(C_0)} {\sigma} \cdot w(\EC_{C_0}(\overline V)).
\end{equation}
The generating function is invariant under this action, since for any ${\mu} \in \mathfrak{S}$,
\begin{equation}
    {\mu} \cdot P(\mathbf{x}) = \frac{1}{(\mathbf{\overline V} - \mathbf{1})!} \sum_{{\sigma} \in \mathfrak{S}} {\mu}  {\sigma} \cdot w(\EC_{C_0}(\overline V)) = P(\mathbf{x}).
\end{equation}

\begin{remark}
    First, consider a transversal $C\in\mathcal T(V)$ and a subset $C\subseteq U\subseteq \overline V$. By Lemma~\ref{lem:representations_decompose}, $w(\EC_C(U)) = w(\mathcal{D}_C(U)) w(\mathcal{R}(C))$. Multiplying the recurrence~\eqref{general_recursion_formula} for the weight $w(\mathcal D_C(U))$ by the sink weight $w(\mathcal{R}(C))$ yields
    \begin{equation*}
        w(\EC_C(U)) = \delta_{C,U} w(\mathcal{R}(C)) + \sum_{C \subseteq T \subsetneq U} w(\EC_C(T)) M_{T,U}.
    \end{equation*}

    Recall that $P_U(\mathbf x)$ is the generating function over all eventually constant representations on $U$. Since $\mathbf U  \geq \mathbf{1}$, the transversals of $U$ are the transversals of $V$ contained in $U$, so $P_U(\mathbf{x}) = \sum_{C \in \mathcal{T}(V), C \subseteq U} w(\EC_C(U))$. Thus, we have the sum over all transversals $C\in\mathcal T(V)$ contained in $U$,
   \[P_U(\mathbf x) = \sum_{C \subseteq U} \delta_{C,U} w(\mathcal{R}(C)) + \sum_{C \subseteq U} \sum_{C \subseteq T \subsetneq U} w(\EC_C(T)) M_{T,U}.\]
   Swapping the order of summation gives
        \begin{align*}
        P_U(\mathbf x)&= \sum_{C \subseteq U} \delta_{C,U} w(\mathcal{R}(C)) + \sum_{\substack{T \subsetneq U\\\mathbf T \geq\mathbf 1}} \left( \sum_{C \subseteq T} w(\EC_C(T)) \right) M_{T,U} \\
        &= \sum_{C \subseteq U} \delta_{C,U} w(\mathcal{R}(C)) + \sum_{\substack{T \subsetneq U\\\mathbf T \geq\mathbf 1}} P_T(\mathbf{x}) M_{T,U}.
    \end{align*}
When $U\in\mathcal T(V)$, then $P_U(\mathbf x)=w(\mathcal R(U))$. When $\mathbf U >\mathbf 1$, then the $\delta_{C,U}$ terms vanish. Expanding the $M_{T,U}$ entries gives
\begin{equation}\label{eq:edge_recurrence}
    P_U(\mathbf x)=\sum_{\substack{T \subsetneq U\\\mathbf T \geq\mathbf 1}} (-1)^{|U\setminus T|-1} P_T(\mathbf{x}) \prod_{\alpha:a\to b}\prod_{u\in (U\setminus T)_{a}}\sum_{v\in T_{b}}x_{u,v}^{(\alpha)}.
\end{equation}
\end{remark}

\begin{remark}
    Consider subsets $X,Y$ that partition the vertex set $Q_0$ and are closed under successors. In this remark, we add a superscript to indicate the quiver with respect to which each construction is being carried out.
    Let $U:=\coprod_{a\in X}V_a$ and $T:=\coprod_{a\in Y}V_a\subseteq\overline V$. Observe that $U$ and $T$ also partition $\overline V$ and are closed under successors in the graph $G(Q)$.
    
Since $X$ and $Y$ are closed under successors in $Q$, the subsets induce subquivers of $Q$, say $Q(X)$ and $Q(Y)$. Also, there is a bijective correspondence
\[
\Rep(Q,V) \stackrel{\simeq}{\longrightarrow} \Rep(Q(X),(V_a)_{a\in X})\times\Rep(Q(Y),(V_a)_{a\in Y})
\]
given by $W\mapsto(W|_{Q(X)},W|_{Q(Y)})$. The map induces a weight-preserving bijection
\[
\mathcal R(\overline V)\cong\mathcal R^{Q(X)}(U)\times \mathcal R^{Q(Y)}(T).
\]

Consider any representation $W\in\Rep(Q,V)$. If $W$ is eventually constant, then $W$ stabilizes in a transversal $C\in\mathcal T(V)$. So for all sufficiently long paths $p$ in $Q(X)$, $W_p$ maps into $C$, so $(W|_{Q(X)})_p$ maps into $C\cap U \in \mathcal{T}((V_a)_{a\in X})$. Therefore $W|_{Q(X)}$ is eventually constant, and $W|_{Q(Y)}$ is similarly. Conversely, if $W|_{Q(X)}$ and $W|_{Q(Y)}$ are eventually constant on $C\in \mathcal{T}((V_a)_{a\in X})$ and $C' \in \mathcal{T}((V_a)_{a\in Y})$, then for all sufficiently long paths $p$ in $Q$, either $p$ lies entirely in $X$ or $p$ lies entirely in $Y$. Thus, either $(W|_{Q(X)})_p$ maps into $C$ or $(W|_{Q(Y)})_p$ maps into $C'$, so $W_p$ maps into $C\cup C'\in\mathcal T(V)$, so $W$ is eventually constant.

 By this correspondence, the generating function factors $P^Q_{\overline V}(\mathbf{x}) = P^{Q(X)}_U(\mathbf{x}) P^{Q(Y)}_T(\mathbf{x})$. Consequently, if $X_1, \ldots,X_k$ partition the vertex set $Q_0$ and are successor closed with induced subquivers $Q(1),\ldots,Q(k)$ with $U(1):=\coprod_{a\in X_1}V_a,\ldots,U(k):=\coprod_{a\in X_k}V_a$, then
    \begin{equation}
        P_{\overline V}^Q(\mathbf{x}) = \prod_{i=1}^k P^{Q(i)}_{U(i)}(\mathbf{x}).
    \end{equation}
\end{remark}

\subsection{Generating function in vertex variables}
\label{subsec:gen_function_vertex}
We reinterpret the results in Section~\ref{subsec:gen_fun_edges} to target variables via the specialization $x_{u,v}^{(\alpha)}\mapsto x_v$. We use the notation given in Sections~\ref{subsection_stability_source_factor},~\ref{subsection_quivers}, and~\ref{subsec:multinomial_notation}, and the results from the above Section~\ref{subsec:gen_fun_edges}.

We first consider any subsets $S,\,T\subseteq U\subseteq \overline{V}$ and the weight $w(\mathcal R_U(T\to S))$. Under the target specialization, the weight in~\eqref{R_U(S,T)_weight_edges} evaluates to
\begin{equation}\label{w(R_U(TtoS))_var_unsimplified}
    w(\mathcal R_U(T\to S))=\prod_{\alpha:a\to b} e_1(S_{b})^{|T_{a}|}\in R_{\overline{V}}.
\end{equation}
We can further simplify~\eqref{w(R_U(TtoS))_var_unsimplified} by reindexing with vertices instead of arrows. For each pair of vertices $a,\,b\in Q_0$, the factor $e_1(S_{b})^{|T_a|}$ appears once for every arrow $\alpha:a\to b$ in $Q_1$. This number is given by the entry $\mathsf A(Q)_{b,a}$ of the adjacency matrix of $Q$. Multiplying these factors as $a,\,b\in Q_0$ range over the vertices yields
\begin{equation}\label{R_U(S,T)_weight_targets_2}
 w(\mathcal R_U(T\to S))= \prod_{b\in Q_0} e_1(S_b)^{\sum_{a\in Q_0} \mathsf A(Q)_{b,a}|T_a|} = \prod_{b\in Q_0}\mathbf  e_1(S)_b^{(\mathsf A(Q) \mathbf T )_b} =\mathbf e_1(S)^{\mathsf A(Q)\mathbf T }.
\end{equation}

Therefore, for all subsets $T,\,U\subseteq \overline{V}$, the specialized transition weight matrix has the entry
\begin{equation}\label{eq_W_quiver_tar_var}
	M_{T,U}=
	\begin{cases}
	(-1)^{|U\setminus T|-1}\mathbf e_1(T)^{(\mathsf A(Q)(\mathbf U -\mathbf T ))}
    &\text{if } T\subsetneq U,\\
	\hspace{18mm} 0&\text{otherwise.}
	\end{cases}
\end{equation}
Also, for any subset $S\subseteq \overline{V}$, the weight of $\mathcal R(S)$ is $\mathbf e_1(S)^{\mathsf A(Q)\mathbf S }$. So, the weight of $\EC_S(U)$ specializes to $\mathbf e_1(S)^{\mathsf A(Q)\mathbf S }(I-M)^{-1}_{S,U}$. Over chains of subsets, this is
\begin{equation}\label{eq:target_specialized_DSU_chain}
    w(\EC_S(U)) =\mathbf e_1(S)^{\mathsf A(Q)\mathbf S } \sum_{\Gamma\in\mathcal{C}(S,U)}(-1)^{|U\setminus S|-L(\Gamma)}\prod_{\ell=1}^{L(\Gamma)} \mathbf e_1(\Gamma_{\ell-1})^{\mathsf A(Q)(\bm{\Gamma}_\ell - \bm{\Gamma}_{\ell-1})}.
\end{equation}

For each transversal $C\in\mathcal T(V)$ and each vertex $b\in Q_0$, we have $(\mathsf A(Q)\mathbf 1)_b=d^{Q}_{\In}(b)$ and $e_1(C_b)=x_{c_b}$. Thus, in terms of the specialized matrix $M$ and in terms of chains, the generating function is
\begin{equation}\label{eq:target_specialized_gen_fun_chains}
\begin{aligned}
    P(\mathbf x)&=\sum_{C\in\mathcal T(V)}\prod_{b\in Q_0}x_{c_b}^{d^Q_{\In}(b)}(I-M)^{-1}_{C,\overline{V}}\\
    &=\sum_{C\in\mathcal T(V)}\prod_{b\in Q_0}x_{c_b}^{d^Q_{\In}(b)} \sum_{\Gamma\in\mathcal{C}(C,\overline{V})}(-1)^{|\overline{V}\setminus C|-L(\Gamma)}\prod_{\ell=1}^{L(\Gamma)} \mathbf e_1(\Gamma_{\ell-1})^{\mathsf A(Q)(\bm{\Gamma}_\ell - \bm{\Gamma}_{\ell-1})}.
    \end{aligned}
\end{equation}

We now establish that the specialized generating function is invariant under permutations of the variables within each set $V_a$. Recall the group $\mathfrak{S} = \prod_{a \in Q_0} \mathfrak{S}_{a}$ and its action on $R_{G(Q)_E}$ from Section~\ref{subsec:gen_fun_edges}. This group induces a natural action on the polynomial ring $R_{\overline{V}}$, given by ${\sigma} \cdot x_v := x_{\sigma_a(v)}$ for $v \in V_a$. 

Let $\varphi: R_{G(Q)_E} \to R_{\overline{V}}$ be the target specialization homomorphism, defined by $\varphi(x_{u,v}^{(\alpha)}) = x_v$. This homomorphism is $\mathfrak{S}$-equivariant since for all ${\sigma} \in \mathfrak{S}$, arrows $\alpha:a \to b$, and vertices $u \in V_a$, $v \in V_b$, we see
\begin{equation*}
    \varphi({\sigma} \cdot x_{u,v}^{(\alpha)}) = \varphi(x_{\sigma_a(u), \sigma_b(v)}^{(\alpha)}) = x_{\sigma_b(v)} = {\sigma} \cdot x_v = {\sigma} \cdot \varphi(x_{u,v}^{(\alpha)}).
\end{equation*}
Consequently, $\varphi$ maps the $\mathfrak{S}$-invariant subring of $R_{G(Q)_E}$ into the $\mathfrak{S}$-invariant subring of $R_{\overline{V}}$. Since the edge-variable generating function is invariant, its image $P(\mathbf{x}) \in R_{\overline{V}}$ remains invariant under permutations of the variables $\{x_v : v \in V_a\}$ for each $a \in Q_0$. Also, the orbit sum~\eqref{orbit_sum_edges} passes to the same formula under this specialization,
\begin{equation}
    P(\mathbf{x}) = \frac{1}{(\mathbf{\overline V} - \mathbf{1})!} \sum_{{\sigma} \in \mathfrak{S}} {\sigma} \cdot w(\EC_{C_0}(\overline{V})).
\end{equation}

\begin{remark}\label{rmk:target_recursion}
    We revisit the recurrence~\eqref{eq:edge_recurrence} in the target variable specialization. If $U$ is a transversal, then
    \[P_U(\mathbf{x}) = \prod_{b\in Q_0}x_{u_b}^{d^{Q}_{\In}(b)}.\]
    If $U$ strictly contains a transversal, then we have
\begin{equation}\label{eq:target_recurrence}
    P_U(\mathbf{x}) = \sum_{\substack{\mathbf T \geq\mathbf 1\\ T \subsetneq U}} (-1)^{|U\setminus T|-1} P_T(\mathbf{x}) \mathbf{e}_1(T)^{\mathsf{A}(Q)(\mathbf U -\mathbf T )}.
\end{equation}
\end{remark}

\subsection{Cardinality of the set of eventually constant representations}
\label{subsec:card_EC}

In this section, we derive a matrix enumeration formula for the cardinality of the set of eventually constant representations. We pass from the attaching weight matrix $M$ to the compressed attaching cardinality matrix $N$. We use the notation and results in Sections~\ref{subsec:multinomial_notation} and~\ref{subsec:gen_function_vertex}.

Consider the homomorphism $R_{\overline V}\to\mathbb Z$ induced by specializing all variables to $1$, $x_v\mapsto 1$. Since each subgraph of $G(Q)$ is assigned a monomial, the polynomial weights of each subset of graphs $\mathcal A(T,U)$, $\mathcal R(S)$, $\mathcal D_S(U)$, $\EC_S(U)$, and $\EC$ evaluate to their cardinalities.

For all subsets $S,\,T\subseteq U\subseteq \overline V$, the attaching set has size
\begin{equation}\label{quiver_attaching_cardinality}
    |\mathcal A(U\setminus T,U)|=\prod_{b\in Q_0}\mathbf{T}_b^{(\mathsf A(Q)(\mathbf{U}-\mathbf{T}))_b}=\mathbf{T}^{\mathsf A(Q)(\mathbf{U}-\mathbf{T})}.
\end{equation}
Therefore, Corollary~\ref{Hyperforest enumerator chain formulation-Prop} yields
\begin{equation}\label{first_card_quiver}
       |\mathcal D_S(U)| = \sum_{\Gamma\in\mathcal{C}(S,U)}(-1)^{|U\setminus S|-L(\Gamma)}\prod_{\ell=1}^{L(\Gamma)} \bm{\Gamma}_{\ell-1}^{\mathsf A(Q)(\bm{\Gamma}_\ell-\bm{\Gamma}_{\ell-1})}.
\end{equation}
The contribution of each chain $\Gamma=(S,\Gamma_1,\ldots,\Gamma_{L(\Gamma)-1},U)$ in $\mathcal P(\overline V)$ only depends on the chain of cardinality vectors $(\mathbf{S},\bm{\Gamma}_1,\ldots,\bm{\Gamma}_{L(\Gamma)-1},\mathbf{U})$ in $[\mathbf{0},\mathbf{\overline V}]\subseteq\mathbb N^{Q_0}$. Grouping chains by their fiber under this map yields a sum over chains in $[\mathbf{0},\mathbf{\overline V}]$. In turn, this sum gives an enumeration formula with the attaching cardinality matrix $N$, a strictly upper triangular matrix indexed on $[\mathbf{0},\mathbf{\overline V}]$.

\begin{theorem}\label{thm_cardinality_quiver}
  Fix any subsets $S\subseteq U\subseteq \overline V$. The cardinality of $\mathcal D_S(U)$ is given by
\begin{equation}\label{cardinality_matrix_quiver}
      |\mathcal D_S(U)|=(\mathbf{U}-\mathbf{S})!(I-N)^{-1}_{\mathbf{S},\mathbf{U}}.
  \end{equation}
  As a sum over chains $\bm{\Gamma}=(\mathbf{S},\bm{\Gamma}_1,\ldots,\bm{\Gamma}_{L(\bm\Gamma)-1},\mathbf{U})$ in $\mathbb N^{Q_0}$, we also have
\begin{equation}\label{cardinality_chain_quiver}
      |\mathcal D_S(U)|=\sum_{\bm{\Gamma}\in\mathcal C_{\mathbb N^{Q_0}}(\mathbf{S},\mathbf{U})}(-1)^{|U\setminus S|-L(\bm\Gamma)}\binom{\mathbf{U}-\mathbf{S}}{\bm{\Gamma}_1-\mathbf{S},\ldots,\mathbf{U}-\bm{\Gamma}_{L(\bm\Gamma)-1}}\prod_{\ell=1}^{L(\bm\Gamma)} \bm{\Gamma}_{\ell-1}^{\mathsf A(Q)(\bm{\Gamma}_\ell-\bm{\Gamma}_{\ell-1})}.
  \end{equation}
\end{theorem}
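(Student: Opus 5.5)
Starting from the chain formula \eqref{first_card_quiver}, I would group chains in $\mathcal P(\overline V)$ by their image under the cardinality map $W\mapsto\mathbf W$. For a chain $\Gamma=(S,\Gamma_1,\ldots,\Gamma_{d-1},U)$ in $\mathcal P(\overline V)$, the summand $(-1)^{|U\setminus S|-d}\prod_\ell\bm\Gamma_{\ell-1}^{\mathsf A(Q)(\bm\Gamma_\ell-\bm\Gamma_{\ell-1})}$ depends only on the image chain $\bm\Gamma$ of cardinality vectors. Strict inclusion $\Gamma_{\ell-1}\subsetneq\Gamma_\ell$ is equivalent to $\bm\Gamma_{\ell-1}<\bm\Gamma_\ell$ whenever $\Gamma_{\ell-1}\subseteq\Gamma_\ell$. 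Hence the image is a strictly increasing chain in $[\mathbf S,\mathbf U]\subseteq\mathbb N^{Q_0}$ of the same length $d$.

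The key step is counting the fiber over a fixed chain $\bm\Gamma\in\mathcal C_{\mathbb N^{Q_0}}(\mathbf S,\mathbf U)$ with $S,U$ fixed. A chain of subsets from $S$ to $U$ with prescribed cardinality vectors is the same as an ordered decomposition of $U\setminus S$ into blocks $\Gamma_\ell\setminus\Gamma_{\ell-1}$. Working vertex by vertex $a\in Q_0$, we must split $(U\setminus S)_a$ into ordered blocks of sizes $(\bm\Gamma_\ell-\bm\Gamma_{\ell-1})_a$. The number of ways is $\prod_a\binom{(\mathbf U-\mathbf S)_a}{(\bm\Gamma_1-\mathbf S)_a,\ldots}$, which is exactly the vector multinomial $\binom{\mathbf U-\mathbf S}{\bm\Gamma_1-\mathbf S,\ldots,\mathbf U-\bm\Gamma_{d-1}}$. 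Substituting this into \eqref{first_card_quiver} gives \eqref{cardinality_chain_quiver} at once, using $|U\setminus S|=|\mathbf U-\mathbf S|$.

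For \eqref{cardinality_matrix_quiver}, I would write the multinomial as $(\mathbf U-\mathbf S)!\prod_{\ell}\frac{1}{(\bm\Gamma_\ell-\bm\Gamma_{\ell-1})!}$. The sign then splits as $(-1)^{|\mathbf U-\mathbf S|-d}=\prod_\ell(-1)^{|\bm\Gamma_\ell-\bm\Gamma_{\ell-1}|-1}$, since the block sizes telescope. Each chain summand in \eqref{cardinality_chain_quiver} therefore equals $(\mathbf U-\mathbf S)!\prod_\ell N_{\bm\Gamma_{\ell-1},\bm\Gamma_\ell}$. Since $N$ is strictly upper triangular on the finite poset $[\mathbf 0,\mathbf{\overline V}]$, the chain expansion \eqref{matrix-chain-equivalence} of Remark~\ref{subsec:matrix_enumerators_chains} gives $\sum_{\bm\Gamma}\prod_\ell N_{\bm\Gamma_{\ell-1},\bm\Gamma_\ell}=(I-N)^{-1}_{\mathbf S,\mathbf U}$. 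This finishes the argument.

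The degenerate cases should be checked separately. If $S=U$, the only chain is the trivial one, giving $1=0!\cdot 1$. If $S\not\subseteq U$, the statement's hypothesis excludes it. All chains in $[\mathbf S,\mathbf U]$ stay inside $[\mathbf 0,\mathbf{\overline V}]$, so the index set of $N$ is adequate.

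The main obstacle is the fiber-counting bijection: one must verify that distinct ordered block decompositions give distinct chains and that every chain with the given vector profile arises this way. The rest is bookkeeping of signs and factorials.
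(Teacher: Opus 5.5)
Your proposal is correct and follows the same route as the paper. You group the subset chains in \eqref{first_card_quiver} by their cardinality-vector image, count each fiber by the vector multinomial, split the sign and multinomial into $(\mathbf U-\mathbf S)!\prod_\ell N_{\bm\Gamma_{\ell-1},\bm\Gamma_\ell}$, and then invoke the chain expansion \eqref{matrix-chain-equivalence}. The only cosmetic difference is in the fiber count: the paper takes a telescoping product of binomials $\binom{\mathbf U-\bm\Gamma_{\ell-1}}{\bm\Gamma_\ell-\bm\Gamma_{\ell-1}}$ by choosing each $\Gamma_\ell\setminus\Gamma_{\ell-1}$ in turn, whereas you count the ordered block decompositions of $U\setminus S$ directly.
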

\begin{proof}
Consider the map $\mathcal P(\overline V)\to[\mathbf{0},\mathbf{\overline V}]$ given by $T\mapsto\mathbf{T}$. Since this map is strictly increasing, it induces a map on chains, $\mathcal C_{\mathcal P(\overline V)}(S,U)\to\mathcal C_{\mathbb N^{Q_0}}(\mathbf{S},\mathbf{U})$, given by $(S,\Gamma_1,\ldots,\Gamma_{L(\Gamma)-1},U)\mapsto (\mathbf{S},\bm{\Gamma}_1,\ldots,\bm{\Gamma}_{L(\Gamma)-1},\mathbf{U}).$

Now, we fix any chain $\bm{\Gamma}=(\mathbf{S},\bm{\Gamma}_1,\ldots,\bm{\Gamma}_{d-1},\mathbf{U})$ in $\mathcal C_{\mathbb N^{Q_0}}(\mathbf{S},\mathbf{U})$ and calculate the size of its fiber. Consider any subchain $(S,\Gamma_1,\ldots,\Gamma_{\ell-1})$ of subsets that corresponds to subchain $(\bm{\Gamma}_0,\ldots,\bm{\Gamma}_{\ell-1})$. For each vertex $a\in Q_0$, there are $\binom{\mathbf{U}_a-(\bm{\Gamma}_{\ell-1})_a}{(\bm{\Gamma}_\ell)_a-(\bm{\Gamma}_{\ell-1})_a}$ ways to extend to an appropriate subset $(\Gamma_\ell)_a\subseteq U_a$. Taking the product over each vertex in $Q_0$, there are $\binom{\mathbf{U}-\bm{\Gamma}_{\ell-1}}{\bm{\Gamma}_\ell-\bm{\Gamma}_{\ell-1}}$ ways to choose a set $\Gamma_\ell\setminus \Gamma_{\ell-1}$ so that $(S,\Gamma_1,\ldots,\Gamma_\ell)$ extends to $(\bm{\Gamma}_0,\ldots,\bm{\Gamma}_{\ell})$. Consequently, the size of the fiber is
    \begin{equation*}
   \binom{\mathbf{U}-\mathbf{S}}{\bm{\Gamma}_1-\mathbf{S}}\binom{\mathbf{U}-\bm{\Gamma}_1}{\bm{\Gamma}_2-\bm{\Gamma}_1}\cdots\binom{\mathbf{U}-\bm{\Gamma}_{d-2}}{\bm{\Gamma}_{d-1}-\bm{\Gamma}_{d-2}}\binom{\mathbf{U}-\bm{\Gamma}_{d-1}}{\mathbf{U}-\bm{\Gamma}_{d-1}}=\binom{\mathbf{U}-\mathbf{S}}{\bm{\Gamma}_1-\mathbf{S},\ldots,\mathbf{U}-\bm{\Gamma}_{d-1}}.
    \end{equation*}

Grouping chains of subsets by their fiber under this map, we have
    \begin{equation*}
    \begin{split}
     &\sum_{\Gamma\in\mathcal C_{\mathcal P(\overline V)}(S,U)}\prod_{\ell=1}^{L(\Gamma)} \bm{\Gamma}_{\ell-1}^{\mathsf A(Q)(\bm{\Gamma}_\ell-\bm{\Gamma}_{\ell-1})} \\
     &\qquad= \sum_{\bm{\Gamma}\in\mathcal C_{\mathbb N^{Q_0}}(\mathbf{S},\mathbf{U})}\binom{\mathbf{U}-\mathbf{S}}{\bm{\Gamma}_1-\mathbf{S},\ldots,\mathbf{U}-\bm{\Gamma}_{L(\bm\Gamma)-1}}\prod_{\ell=1}^{L(\bm\Gamma)} \bm{\Gamma}_{\ell-1}^{\mathsf A(Q)(\bm{\Gamma}_\ell-\bm{\Gamma}_{\ell-1})}.
    \end{split}
    \end{equation*}
 Then,~\eqref{cardinality_chain_quiver} follows from~\eqref{first_card_quiver}. Finally, we can factor each coefficient in~\eqref{cardinality_chain_quiver} by
      \begin{equation}\label{factorizing_quiver_coefficient}
          (- 1)^{|U\setminus S|-L(\bm\Gamma)}\binom{\mathbf{U}-\mathbf{S}}{\bm{\Gamma}_1-\mathbf{S},\ldots,\mathbf{U}-\bm{\Gamma}_{L(\bm\Gamma)-1}}=(\mathbf{U} -\mathbf{S})!\prod_{\ell=1}^{L(\bm\Gamma)}\frac{(- 1)^{|\bm{\Gamma}_\ell-\bm{\Gamma}_{\ell-1}|-1}}{(\bm{\Gamma}_\ell-\bm{\Gamma}_{\ell-1})!}.
      \end{equation}
Decomposing each coefficient in~\eqref{cardinality_chain_quiver} by~\eqref{factorizing_quiver_coefficient} gives
\begin{equation*}
        |\mathcal D_S(U)|=(\mathbf{U}-\mathbf{S})!\sum_{\bm{\Gamma}\in\mathcal C_{\mathbb N^{Q_0}}(\mathbf{S},\mathbf{U})}\prod_{\ell=1}^{L(\bm\Gamma)}N_{\bm{\Gamma}_{\ell-1},\bm{\Gamma}_{\ell}},
    \end{equation*}
which is equivalent to~\eqref{cardinality_matrix_quiver} by applying the matrix-chain equivalence given in~\eqref{matrix-chain-equivalence}.
\end{proof}

Consider any vertex subsets $S\subseteq U\subseteq \overline V$. The cardinality of the set of representations on $S$ is $|\mathcal R(S)|=\prod_{b\in Q_0}\mathbf{S}_b^{(\mathsf A(Q)\mathbf{S})_b}=\mathbf{S}^{\mathsf A(Q)\mathbf{S}}$.
Thus, Theorem~\ref{thm_cardinality_quiver} and Lemma~\ref{lem:representations_decompose} give
\begin{equation}\label{eq:cardinality_independence}
    |\EC_S(U)|=|\mathcal R(S)||\mathcal D_S(U)|=\mathbf{S}^{\mathsf A(Q)\mathbf{S}}(\mathbf{U}-\mathbf{S})!(I-N)^{-1}_{\mathbf{S},\mathbf{U}}.
\end{equation}
In particular, observe that $|\EC_S(U)|$ depends only on the cardinality vectors $\mathbf{S}$ and $\mathbf{U}$.

Recall that $\mathbf{1} = (1,\ldots, 1)$ is a $Q_0$-indexed vector.
Fix any transversal $C\subseteq \overline V$. Then $\mathbf{C}=\mathbf{1}$, $|\mathcal R(C)|=1$, and $|\EC_C(\overline V)|=(\mathbf{\overline V}-\mathbf{1})!(I-N)^{-1}_{\mathbf{1},\mathbf{\overline V}}$.
Also, observe that there are $\prod_{a\in Q_0}|V_a|=\mathbf{\overline V}^{\mathbf{1}}$ many transversals on $\overline V$. Since $\mathbf{\overline V}^{\mathbf{1}}(\mathbf{\overline V}-\mathbf{1})!=\mathbf{\overline V}!$, we have the following theorem.

\begin{theorem}\label{thm:EC_cardinality_matrix_enumerator}
The cardinality of the set of eventually constant set-valued representations is
\begin{equation}
    |\EC|=\mathbf{\overline V}!(I-N)^{-1}_{\mathbf{1},\mathbf{\overline V}}.
\end{equation}
Stated in terms of chains in $\mathbb N^{Q_0}$, the cardinality is given by
\begin{equation}
    |\EC|=\sum_{\bm{\Gamma}\in\mathcal C_{\mathbb N^{Q_0}}(\mathbf{1},\mathbf{\overline V})}(-1)^{|\overline V|-|Q_0|-L(\bm\Gamma)}\binom{\mathbf{\overline V}}{\mathbf{1}, \bm{\Gamma}_1-\mathbf{1},\ldots,\mathbf{\overline V}-\bm{\Gamma}_{L(\bm\Gamma)-1}}\prod_{\ell=1}^{L(\bm\Gamma)} \bm{\Gamma}_{\ell-1}^{\mathsf A(Q)(\bm{\Gamma}_\ell-\bm{\Gamma}_{\ell-1})}.
\end{equation}
\end{theorem}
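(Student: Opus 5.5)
The plan is to sum the cardinality formula~\eqref{eq:cardinality_independence} over the partition of $\EC$ by transversals established in Section~\ref{subsec:gen_fun_edges}. There, $\EC$ is shown to be the disjoint union of the sets $\EC_C(\overline V)$ as $C$ ranges over $\mathcal T(V)$; disjointness uses the hypothesis that every vertex of $Q$ is the target of arbitrarily long paths. Hence
\[|\EC|=\sum_{C\in\mathcal T(V)}|\EC_C(\overline V)|.\]

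First, fix a transversal $C$. Then $\mathbf C=\mathbf 1$, and $|\mathcal R(C)|=\mathbf 1^{\mathsf A(Q)\mathbf 1}=1$, since every map between singletons is forced. Applying~\eqref{eq:cardinality_independence} with $S=C$ and $U=\overline V$ gives
\[|\EC_C(\overline V)|=(\mathbf{\overline V}-\mathbf 1)!\,(I-N)^{-1}_{\mathbf 1,\mathbf{\overline V}}.\]
The hypothesis $\mathbf 1\le\mathbf C$ of Lemma~\ref{lem:representations_decompose} holds, which justifies this step. The value is independent of $C$, so we multiply by the number of transversals, $\prod_a|V_a|=\mathbf{\overline V}^{\mathbf 1}$. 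The identity $\mathbf{\overline V}^{\mathbf 1}(\mathbf{\overline V}-\mathbf 1)!=\prod_a |V_a|\,(|V_a|-1)!=\mathbf{\overline V}!$ then yields the matrix form. Here we use that each $V_a$ is nonempty, so that $(|V_a|-1)!$ makes sense.

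For the chain form, we substitute the chain expansion~\eqref{cardinality_chain_quiver} of Theorem~\ref{thm_cardinality_quiver} with $\mathbf S=\mathbf 1$, $\mathbf U=\mathbf{\overline V}$ and $|U\setminus S|=|\overline V|-|Q_0|$. We then multiply the multinomial coefficient $\binom{\mathbf{\overline V}-\mathbf 1}{\bm\Gamma_1-\mathbf 1,\ldots,\mathbf{\overline V}-\bm\Gamma_{L-1}}$ by the factor $\mathbf{\overline V}!/(\mathbf{\overline V}-\mathbf 1)!$ coming from the transversal count. Since $\mathbf 1!=1$, this product equals the multinomial $\binom{\mathbf{\overline V}}{\mathbf 1,\bm\Gamma_1-\mathbf 1,\ldots,\mathbf{\overline V}-\bm\Gamma_{L-1}}$. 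Alternatively, one can expand $(I-N)^{-1}_{\mathbf 1,\mathbf{\overline V}}$ directly via~\eqref{matrix-chain-equivalence} and recombine the factorials using~\eqref{factorizing_quiver_coefficient}.

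There is no real obstacle, since all the substantive work is already done in Lemma~\ref{lem:representations_decompose} and Theorem~\ref{thm_cardinality_quiver}. The only points needing care are the partition of $\EC$ by transversals and the bookkeeping of the factorial and multinomial identity.
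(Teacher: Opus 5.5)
Your proposal is correct and follows the paper's own argument. Like the paper, you partition $\EC$ by transversals, apply the cardinality formula for $|\EC_S(U)|$ with $S=C$ (where $\mathbf C=\mathbf 1$ and $|\mathcal R(C)|=1$), and multiply by the $\mathbf{\overline V}^{\mathbf 1}$ transversals using $\mathbf{\overline V}^{\mathbf 1}(\mathbf{\overline V}-\mathbf 1)!=\mathbf{\overline V}!$; your absorption of this factor into the multinomial is exactly the bookkeeping needed for the chain form.
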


\begin{remark}\label{remark:card_recurrence}
    As observed above, the cardinality $|\EC_S(U)|$ depends only on the cardinality vectors $\mathbf{S}$ and $\mathbf{U}$. So, we can define the function
    \[
    E:\{(\mathbf c,\mathbf d)\in[\mathbf{0},\mathbf{\overline V}]^2 : \mathbf c\leq\mathbf d\} \longrightarrow \mathbb{N},
    \]
    by $E(\mathbf{c}, \mathbf{d}) := |\EC_S(U)|$, for any vertex subsets $S\subseteq U\subseteq \overline V$ such that $\mathbf{S}=\mathbf{c}$ and $\mathbf{U}=\mathbf{d}$.

    The recurrence for $(I-M)^{-1}$ in~\eqref{general_recursion_formula} extends to $(I-N)^{-1}$ since $N$ is strictly upper triangular. So for all $\mathbf c\leq\mathbf d\in[\mathbf{0},\mathbf{\overline V}]$, the equation in~\eqref{eq:cardinality_independence} yields the recurrence
    \begin{equation}
        E(\mathbf{c}, \mathbf{d}) 
        =
        \mathbf c ^{(\mathsf A(Q)\mathbf c)}
        \delta_{\mathbf c,\mathbf d}+ \sum_{\mathbf{c}\leq \mathbf{t} < \mathbf{d}} (-1)^{|\mathbf{d}-\mathbf{t}| - 1} \binom{\mathbf{d}-\mathbf{c}}{\mathbf{t}-\mathbf{c}} 
        \mathbf{t}^{\mathsf A(Q)(\mathbf{d}-\mathbf{t})} E(\mathbf{c}, \mathbf{t}).
    \end{equation}
    Since there are $\mathbf{\overline V}^{\mathbf{1}}$ transversals in $\overline V$, evaluating $E(\mathbf{1}, \mathbf{\overline V})$ gives
    \begin{equation}
        |\EC|=\delta_{\mathbf{1},\mathbf{\overline V}}+\mathbf{\overline V}^{\mathbf{1}} \sum_{\mathbf{1} \leq \mathbf{t} < \mathbf{\overline V}} (-1)^{|\mathbf{\overline V}-\mathbf{t}| - 1} \binom{\mathbf{\overline V}-\mathbf{1}}{\mathbf{t}-\mathbf{1}} \mathbf{t}^{\mathsf A(Q)(\mathbf{\overline V}-\mathbf{t})} E(\mathbf{1}, \mathbf{t}).
    \end{equation}
\end{remark}

\section{Eventually constant representations for Jordan and cyclic quivers}
\label{sec:word_eventually_constant}
We study particular indexing quivers $Q$. We analyze their eventually constant representations and associated generating polynomials, whose general forms we derived in Section~\ref{sec:eventually_constant_set-valued_representations}.

\subsection{Eventually constant endomorphisms over Jordan quivers}
Fix a positive integer $j\in\mathbb N$. In this section, let $Q$ be a quiver with a single vertex $Q_0:=\{v_0\}$ and $j$ loops, denoted $1,\ldots,j$. Fix a single nonempty finite set $V_{v_0}$. We use the notation given in Sections~\ref{subsection_quivers},~\ref{subsec:multinomial_notation}, and the results in Section~\ref{sec:eventually_constant_set-valued_representations}.

Since $Q_0$ has a single vertex, we have $\coprod_{a\in Q_0}V_a=V_{v_0}$, which we simply denote as $V$. Each representation $W\in\Rep(Q,V)$ identifies with the $j$-tuple of endomorphisms $W:=(W_1,\ldots,W_{j})\in\End(V)^j$. See Figure~\ref{fig_00015}.

\begin{figure}[htbp]
    \centering
    \begin{tikzpicture}[scale=1, transform shape, baseline=(current bounding box.center),
        quiver_node/.style={circle, draw=black!70, fill=orange!15, very thick, minimum size=8.5mm, inner sep=0.5pt, font=\large},
        edge_r/.style={->, >={Stealth[length=2.5mm]}, red!75!black, very thick, dotted},
        edge_b/.style={->, >={Stealth[length=2.5mm]}, blue!75!black, very thick},
        edge_g/.style={->, >={Stealth[length=2.5mm]}, green!50!black, very thick, dash pattern=on 5pt off 2pt}
        ]
        
        \node[quiver_node] (v0) at (0, 0) {$v_0$};
        
        \draw[edge_b] (v0) edge[out=120, in=60, looseness=8, min distance=18mm] node[midway, above] {$1$} (v0);
        
        \draw[edge_g] (v0) edge[out=360, in=300, looseness=8, min distance=18mm] node[midway, right=1mm] {$2$} (v0);
        
        \draw[edge_r] (v0) edge[out=240, in=180, looseness=8, min distance=18mm] node[midway, left=1mm] {$3$} (v0);
        
    \end{tikzpicture}
    \hspace{1.5cm}
    \begin{tikzpicture}[scale=0.9, transform shape, baseline=(current bounding box.center),
        domain_node/.style={circle, draw=black!70, fill=orange!15, very thick, minimum size=8.5mm, inner sep=0.5pt, font=\large},
        root_node/.style={circle, draw=MidnightBlue, fill=blue!15, very thick, minimum size=8.5mm, inner sep=0.5pt, font=\large},
        edge_r/.style={->, >={Stealth[length=2.5mm]}, red!75!black, very thick, dotted},
        edge_b/.style={->, >={Stealth[length=2.5mm]}, blue!75!black, very thick},
        edge_g/.style={->, >={Stealth[length=2.5mm]}, green!50!black, very thick, dash pattern=on 5pt off 2pt}
        ]
        
        \node[root_node] (n7) at (3, .25) {7};
        
        \node[domain_node] (n1) at (3.0, 4.8) {1}; 
        \node[domain_node] (n2) at (1.5, 3.5) {2}; 
        \node[domain_node] (n3) at (4.5, 3.5) {3}; 
        \node[domain_node] (n4) at (0.5, 2.0) {4}; 
        \node[domain_node] (n5) at (3.0, 2.5) {5}; 
        \node[domain_node] (n6) at (5.5, 2.0) {6}; 
        
        \draw[edge_r] (n1) edge[bend left=15] (n2);
        \draw[edge_r] (n2) to (n4);
        \draw[edge_r] (n3) to (n6);
        \draw[edge_r] (n4) to (n7);
        \draw[edge_r] (n6) to (n5);
        \draw[edge_r] (n5) to (n7);
        \draw[edge_r] (n7) edge[loop left, looseness=6, min distance = 15mm] (n7);
        
        \draw[edge_b] (n1) edge[bend left=15] (n3);
        \draw[edge_b] (n2) edge[bend left=15] (n5);
        \draw[edge_b] (n3) edge[bend left=15] (n7);
        \draw[edge_b] (n4) edge[bend left=15] (n5);
        \draw[edge_b] (n6) edge[bend left=12] (n7);
        \draw[edge_b] (n5) edge[bend left=15] (n7);
        \draw[edge_b] (n7) edge[loop right, looseness=6, min distance = 15mm] (n7);
        
        \draw[edge_g] (n1) edge[bend right=15] (n2);
        \draw[edge_g] (n2) edge[bend right=15] (n7);
        \draw[edge_g] (n3) edge[bend right=15] (n5);
        \draw[edge_g] (n4) edge[bend right=15] (n6);
        \draw[edge_g] (n6) edge[bend right=02] (n7);
        \draw[edge_g] (n5) edge[bend right=15] (n7);
        \draw[edge_g] (n7) edge[loop below, looseness=6, min distance = 15mm] (n7);
    \end{tikzpicture}
    
    \caption{On the left is the $3$-Jordan quiver $Q$ with a single vertex $v_0$ and three loops $1, 2, 3$. On the right is the graph $G(W)$ of an eventually constant representation $W=(W_1,W_2,W_3)\in\Rep(Q,[7])$, stabilizing to the vertex $\{7\}$. The thick blue arrows are the $1$-labeled edges $(u,v)^{(1)}$ corresponding to the map $W_{1}$. Similarly, the dashed green arrows correspond to $W_{2}$, and the dotted red arrows correspond to $W_{3}$.}
    \label{fig_00015}
\end{figure}

All loops in $Q$ are composable, so all words generated by the loops yield valid paths $i_\ell i_{\ell-1}\cdots i_2i_1$ in $Q$.
Therefore, a representation $W$ stabilizes in a vertex subset $S\subseteq V$ if $S$ is closed under each endomorphism $W_i$ and all sufficiently long words $W_{i_\ell}\cdots W_{i_1}$ in the subsemigroup generated by $\{W_1,\ldots,W_j\}$ in $\End(V)$ map $V$ into $S$.
In particular, a transversal $C\subseteq V$ is a singleton set, $\{c\}\subseteq V$. So, a representation $W$ is eventually constant if all sufficiently long words $W_{i_\ell}\cdots W_{i_1}$ generated by the endomorphisms $\{W_1,\ldots,W_j\}$ become a single constant map.

Between each pair of vertices $u,v\in V$, the graph $G(Q)$ has $j$ directed edges from $u$ to $v$, $(u,v)^{(1)},\ldots,(u,v)^{(j)}$. Each representation $W$ identifies with the graph $G(W)$ on $V$ whose edges are given by the disjoint union,
\[
\{(u,W_1(u))^{(1)} : u\in V\}\sqcup \ldots \sqcup\{(u,W_j(u))^{(j)} : u\in V\}.
\]
The representation $W$ has the weight
\[w(G(W))=\prod_{i=1}^j\prod_{u\in V} x_{u,W_i(u)}^{(i)},\quad w(G(W))=\prod_{i=1}^j\prod_{u\in V} x_{W_i(u)}\]
in edge and target variables, respectively.

We now apply the enumerators from Section~\ref{sec:eventually_constant_set-valued_representations} to the $j$-Jordan quiver. Specializing~\eqref{eq_W_quiver_full_var}, the entries of the attaching weight matrix $M$ are
\begin{equation}
    M_{T,U}=
    \begin{cases}
    (-1)^{|U\setminus T|-1}\prod_{i=1}^j\prod_{u\in U\setminus T}\sum_{v\in T} x_{u,v}^{(i)}&\text{if } T\subsetneq U,\\
    \hspace{32mm} 0 &\text{otherwise.}
    \end{cases}
\end{equation}
We give two forms for the generating function, as a sum over the matrix entries $(I-M)^{-1}_{\{c\},V}$ and its expansion over chains of subsets,
\begin{equation}
\begin{aligned}
    P(\mathbf x) &=\sum_{c\in V} \left(\prod_{i=1}^j x_{c,c}^{(i)}\right)(I-M)^{-1}_{\{c\},V} \\
    &=\sum_{c\in V} \left(\prod_{i=1}^j x_{c,c}^{(i)}\right) \sum_{\Gamma\in\mathcal{C}(\{c\},V)}(-1)^{|V|-1-L(\Gamma)}\prod_{\ell=1}^{L(\Gamma)}\prod_{i=1}^j\prod_{u\in \Gamma_\ell\setminus \Gamma_{\ell-1}}\sum_{v\in \Gamma_{\ell-1}}x_{u,v}^{(i)}.
\end{aligned}
\end{equation}

The adjacency matrix of $Q$ is the $1 \times 1$ matrix $\mathsf A(Q)$ with entry $j$, and for each subset $U\subseteq V$, the cardinality vector is simply $|U|$. Thus, $(\mathsf A(Q) \mathbf U)=j|U|$. So, the target-specialized transition weight matrix $M$ has entries
\begin{equation}\label{target_weight_simplification}
    M_{T,U}=
    \begin{cases}
    (-1)^{|U\setminus T|-1} e_1(T)^{j|U\setminus T|}&\text{if } T\subsetneq U,\\
    \hspace{19mm} 0&\text{otherwise.}
    \end{cases}
\end{equation}
The target-specialized generating function is
\begin{equation}
    P(\mathbf x) = \sum_{c\in V} x_c^j \sum_{\Gamma\in\mathcal{C}(\{c\},V)}(-1)^{|V|-1-L(\Gamma)}\prod_{\ell=1}^{L(\Gamma)}e_1(\Gamma_{\ell-1})^{j|\Gamma_\ell\setminus \Gamma_{\ell-1}|}.
\end{equation}
In Section~\ref{subsec:gen_function_vertex}, we established that for a general quiver $Q'$, the generating function is invariant under permutations of each set in the $Q'_0$-indexed tuple $(V_a)_{a\in Q'_0}$. Since $Q$ has only a single vertex, $P(\mathbf{x})$ is invariant under the symmetric group $\mathfrak{S}_V$. Thus, $P(\mathbf{x})$ is a symmetric polynomial in the variables $\{x_v : v \in V\}$.

\begin{example}
    Suppose $Q$ is the Jordan quiver with $2$ loops and let $V:=[3]$. In target variables, the submatrix $M[\{1\},[3]]$ is given by
    \[\bordermatrix{
      & \{1\} & \{1,2\} & \{1,3\} & [3] \cr
    \:\:\{1\} & 0 & x_1^2 & x_1^2 & -x_1^4 \cr
    \{1,2\} & 0 & 0 & 0 & (x_1+x_2)^2 \cr
    \{1,3\} & 0 & 0 & 0 & (x_1+x_3)^2 \cr
    \:\:\: [3] & 0 & 0 & 0 & 0 \cr}.\]
    We then see that
    \[w(\EC_{\{1\}}([3])) = x_1^2 \left( -x_1^4 + x_1^2 (x_1+x_2)^2 + x_1^2 (x_1+x_3)^2 \right).\]
   We can evaluate the weight for the remaining transversals by symmetry. This yields the generating function
    \begin{align*}
    P(\mathbf x) &= x_1^2 \left( -x_1^4 + x_1^2(x_1+x_2)^2 + x_1^2(x_1+x_3)^2 \right) \\
    &\quad + x_2^2 \left( -x_2^4 + x_2^2(x_2+x_1)^2 + x_2^2(x_2+x_3)^2 \right) \\
    &\quad + x_3^2 \left( -x_3^4 + x_3^2(x_3+x_1)^2 + x_3^2(x_3+x_2)^2 \right).
    \end{align*}

    Suppose instead that $V:=[100]$. The lattice of sets between $[98]$ and $[100]$ is isomorphic to the lattice between $[1]$ and $[3]$, so the calculation of $w(\EC_{[98]}([100]))$ follows similarly. Since $w(\mathcal{R}([98])) = e_1([98])^{196}$, we have
    \[w(\EC_{[98]}([100])) = e_1([98])^{196} \left( -e_1([98])^4 + e_1([98])^2 e_1([99])^2 + e_1([98])^2 e_1([98]\cup\{100\})^2 \right). \]
\end{example}

Finally, we evaluate the cardinality of the eventually constant representations. Since vectors in $\mathbb N^{Q_0}$ reduce to scalars $t,\,u\in\mathbb N$, the attaching cardinality matrix $N$ is indexed on the integer interval $[0,|V|]$. Its entries are given by
\begin{equation}
    N_{t,u}=
    \begin{cases}
        \frac{(-1)^{u-t-1}}{(u-t)!}t^{j(u-t)}&\text{if }t< u,\\
        \hspace{12mm} 0&\text{otherwise.}
    \end{cases}
\end{equation}
By Theorem~\ref{thm:EC_cardinality_matrix_enumerator}, the total number of eventually constant $j$-tuples of endomorphisms on $V$ is
\begin{equation}
    |\EC|=|V|!(I-N)^{-1}_{1,|V|}.
\end{equation}
For any subset $U\subseteq V$,~\eqref{eq:cardinality_independence} gives
\[|\EC_S(U)|=|S|^{j|S|}(|U|-|S|)!(I-N)^{-1}_{|S|,|U|}.\]

\begin{example}
    Suppose $Q$ is the Jordan quiver with $3$ loops and let $V:=[5]$. Then the submatrix $N[1,5]$ is
    \[
    \begin{pmatrix}
    0 & 1 & -1/2 & 1/6 & -1/24 \\
    0 & 0 & 8 & -32 & 256/3\\
    0 & 0 & 0 & 27 & -729/2\\
    0 & 0 & 0 & 0 & 64 \\
    0 & 0 & 0 & 0 & 0
    \end{pmatrix}.
    \]
    The number of eventually constant representations of $Q$ over $V$ is
    \[|\EC| = 5! (I-N)^{-1}_{1,5} = 992{,}905.\]
\end{example}
\begin{remark}
    Recall the function $E(\mathbf{c}, \mathbf{d})$ defined in Remark~\ref{remark:card_recurrence} for the cardinality $|\EC_S(U)|$. Since the adjacency matrix is $\mathsf{A}(Q) = [j]$ and the subset cardinality vectors collapse to standard cardinality, the recurrence for $E(s,u)$ specializes to
    \begin{equation}
        E(s,u) = \delta_{s,u} s^{js} + \sum_{s \leq t < u} (-1)^{u-t-1} \binom{u-s}{t-s} t^{j(u-t)} E(s,t).
    \end{equation}
    Thus, the cardinality of the set of eventually constant representations is given by
    \begin{equation}
        |\EC|= \delta_{1,|V|} + |V|\sum_{1 \leq t < |V|} (-1)^{|V|-t-1} \binom{|V|-1}{t-1} t^{j(|V|-t)} E(1,t).
    \end{equation}

    This specialization recovers the linear recurrence for the cardinality of the set of labeled quasi-acyclic automata derived by Liskovets~\cite[Theorem 3.1]{Ls06} under the algebraic substitution $k=j$, $r=s$, $n=u-s$, and scaling by the sink weight $E(s,n+s) = a_j(n,s)s^{js}$.
\end{remark}

\subsection{Eventually constant representations over cyclic quivers}
\label{sec:cycle_eventually_constant}

In this section, we consider the eventually constant representations on a cyclic quiver, as studied in~\cite{GHI26,CIKLR25,CILR25}. We use the notation established in Sections~\ref{sec:eventually_constant_set-valued_representations},~\ref{subsection_quivers}, and~\ref{subsec:multinomial_notation}.

Fix an integer $k\in\mathbb N$. Let $Q$ be the cyclic quiver on $k$ vertices indexed by $\mathbb Z/k\mathbb Z$, with arrows given by
\[1\xrightarrow {\alpha_1}2\xrightarrow{\alpha_2}\ldots \xrightarrow{\alpha_{k-2}} k-1 \xrightarrow{\alpha_{k-1}}k\xrightarrow{\alpha_k}1,\]
all understood modulo $k$. For clarity, we write each arrow $\alpha_i$ as $i$.
Also, fix $k$ disjoint finite sets $V_1,\ldots,V_k$, and let $\overline V:=\coprod_{i=1}^k V_i$.

Each representation $W\in\Rep(Q, V)$ identifies with a $k$-tuple $W:=(W_1,\ldots,W_k)$ of functions directed from each set $V_i$ to the next,
\[V_1\xrightarrow {W_1}V_2\xrightarrow{W_2}\ldots\xrightarrow{W_{k-1}}V_k\xrightarrow {W_k}V_1.\]
Transversals $C\subseteq\overline V$ are simply $k$-tuples, $(c_1,\ldots,c_k)$, with each $c_i\in V_i$.
A representation $W$ is eventually constant if all sufficiently long cyclic compositions $W_{a+\ell}\circ \cdots \circ W_{a+1}\circ W_a$ stabilize in a transversal $(c_1,\ldots,c_k)$. See Figure~\ref{fig_00020}. Equivalently, $W$ is eventually constant if and only if for some $i=1,\ldots,k$, the endomorphism $W_{i+k-1}\circ\cdots \circ W_{i+1}\circ W_i:V_i\to V_i$ is eventually constant in the classical sense. In particular, if some $V_i$ is a singleton, then all representations in $\Rep(Q, V)$ are eventually constant.

The adjacency matrix $\mathsf A(Q)\in\mathbb Z^{k\times k}$ has entries
\[\mathsf A(Q)_{i,j}=
\begin{cases}
    1&\text{if }j \equiv i-1  \!\!\!\!\mod k,\\
    0&\text{otherwise}.
\end{cases}\]
Therefore, for each $k$-tuple $\mathbf c\in\mathbb N^{k}$,
\begin{equation}\label{def:shifted_k-tuple}
\mathsf A(Q)\mathbf c=(c_k,c_1,c_2,\ldots,c_{k-2},c_{k-1}).
\end{equation}
We denote this shifted $k$-tuple as $\mathbf c^-:=(c_k,c_1,c_2,\ldots,c_{k-2},c_{k-1})$.

Consider the target-specialized recursion given in Remark~\ref{rmk:target_recursion} for generating functions over subsets $U\subseteq\overline V$. If $U:=(u_1,\ldots,u_k)$ is a transversal, then
\[
P_U(\mathbf x)=\prod_{i=1}^kx_{u_i}=\mathbf e_1(U)^\mathbf 1,
\]
where $\mathbf{1} = (1,\ldots, 1)\in \mathbb{N}^{Q_0}$.
If $U$ is strictly larger than a transversal, then by~\eqref{def:shifted_k-tuple}, the recursive formula~\eqref{eq:target_recurrence} becomes
\begin{equation}\label{eq:recursive_formula_cyclic_quiver}
    P_U(\mathbf{x}) = \sum_{\substack{\mathbf T \geq\mathbf 1\\ T \subsetneq U}} (-1)^{|U\setminus T|-1} P_T(\mathbf{x}) \mathbf{e}_1(T)^{\mathbf U^- -\mathbf T^-}.
\end{equation}
Using this recursive formula, we derive the closed form for the generating function given in~\cite[Theorem 1.1]{GHI26}. This method avoids the use of the matrix-tree theorem.

We first modify our conventions. Instead of working in the polynomial ring $R_{\overline V}=\mathbb Z[x_v : v\in\overline V]$, we pass to its field of fractions $\Frac(R_{\overline V})$. In this field, if we have a $k$-tuple $\mathbf c\in\mathbb Z^k$ instead of $\mathbb N^k$, we can extend vector exponentiation with a $k$-tuple $\mathbf q\in R_{\overline V}^k$ in the natural way. If no term $\mathbf q_i$ is zero, we have
\[\mathbf q^{\mathbf c}:=\prod_{i=1}^k\mathbf q_i^{\mathbf c_i}\in \Frac(R_{\overline V}).\]
Also, we extend the notation for $\mathbf e_1$ to the second elementary symmetric polynomial. For any subset $S\subseteq V_i$, define the ordinary second elementary symmetric polynomial,
\[e_2(S):=\frac{1}{2}\sum_{\substack{u,w\in S\\u\neq w}}x_u x_w\in \Frac(R_{\overline V}).\]
Then for each subset $U\subseteq{\overline V}$, define the $k$-tuple
\[\mathbf e_2(U):=(e_2(U_1),\ldots,e_2(U_k))\in \Frac(R_{\overline V}).\]

\begin{proposition}
    The generating function over the $k$-cyclic quiver on $V$ is given by
    \begin{equation}
        P(\mathbf x)=\mathbf e_1(\overline V)^{\mathbf{\overline V}^-}-2^k\mathbf e_1(\overline V)^{\mathbf{\overline V}^--\mathbf 2}\mathbf e_2(\overline V)^\mathbf 1.
    \end{equation}
\end{proposition}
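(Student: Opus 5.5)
We prove, by strong induction on $|U|$ over all $U\subseteq\overline V$ with $\mathbf U\geq\mathbf 1$, the more general statement
\[
P_U(\mathbf x)=F(U):=\mathbf e_1(U)^{\mathbf U^-}-2^k\,\mathbf e_1(U)^{\mathbf U^--\mathbf 2}\,\mathbf e_2(U)^{\mathbf 1}.
\]
Taking $U=\overline V$ gives the Proposition.

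\textbf{Base case.} If $U$ is a transversal, then $\mathbf U^-=\mathbf 1$ and each $e_2(U_i)=0$. Hence $F(U)=\mathbf e_1(U)^{\mathbf 1}=\prod_i x_{u_i}$. This agrees with $P_U$ from Remark~\ref{rmk:target_recursion}, since in the cyclic quiver each vertex has in-degree $1$. The negative exponents in $\mathbf e_1(U)^{\mathbf U^--\mathbf 2}$ cause no trouble: the factor $\mathbf e_2(U)^{\mathbf 1}$ vanishes whenever some $U_i$ is a singleton. So $F(U)$ is a genuine polynomial, and the term can be read as zero in that case.

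\textbf{Inductive step.} For $\mathbf U>\mathbf 1$, we plug $F(T)$ into the recurrence~\eqref{eq:recursive_formula_cyclic_quiver}. It then suffices to prove the algebraic identity
\[
\sum_{\substack{\mathbf T\geq\mathbf 1\\ T\subsetneq U}}(-1)^{|U\setminus T|-1}F(T)\,\mathbf e_1(T)^{\mathbf U^--\mathbf T^-}=F(U).
\]
Since $F(T)\mathbf e_1(T)^{\mathbf U^--\mathbf T^-}=\mathbf e_1(T)^{\mathbf U^-}-2^k\mathbf e_1(T)^{\mathbf U^--\mathbf 2}\mathbf e_2(T)^{\mathbf 1}$, the exponent $\mathbf U^-$ is fixed, and each summand factors over the coordinates $i=1,\dots,k$. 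Write $T_i=U_i\setminus D_i$, so that $(-1)^{|U\setminus T|}=\prod_i(-1)^{|D_i|}$. Let $n_i:=|U_i|$ and $m_i:=(\mathbf U^-)_i=n_{i-1}$. The full sum over all $T$ with $\mathbf T\geq\mathbf 1$ (including $T=U$) then becomes a product of one-variable alternating sums:
\[
\Sigma_1=\prod_i\sum_{\emptyset\ne T_i\subseteq U_i}(-1)^{n_i-|T_i|}e_1(T_i)^{m_i},\qquad
\Sigma_2=\prod_i\sum_{\emptyset\ne T_i\subseteq U_i}(-1)^{n_i-|T_i|}e_1(T_i)^{m_i-2}e_2(T_i).
\]

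\textbf{Evaluating the one-variable sums.} The first sum is classical inclusion–exclusion. $\sum_{T_i\subseteq U_i}(-1)^{n_i-|T_i|}e_1(T_i)^{m}$ extracts the monomials of $(\sum_{u\in U_i}x_u)^m$ that use every variable of $U_i$. It therefore vanishes when $m<n_i$ and equals $n_i!\prod_{u\in U_i}x_u$ when $m=n_i$. The sum with $e_1^{m-2}e_2$ is handled by the same surjection-extraction argument. The empty set contributes nothing when $n_i\geq 1$, except possibly through an $e_1(\emptyset)^{0}$ or $0^{-1}$ term, which we rule out using the convention above.

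Moving the $T=U$ term to the other side, the required identity is equivalent to
\[
\Sigma_1-2^k\Sigma_2=0 \quad\text{whenever } \mathbf U>\mathbf 1.
\]
Here each $m_i=n_{i-1}$, and $\sum_i m_i=\sum_i n_i$.

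\textbf{Main obstacle.} If some $m_i<n_i$, then by the degree count both $i$-th factors vanish, and the identity holds trivially. That degree reasoning applies to $\Sigma_2$ as well, since $e_1^{m-2}e_2$ has degree $m$. The remaining case is $m_i\geq n_i$ for all $i$. Because $\sum_i m_i=\sum_i n_i$, this forces $\mathbf U^-=\mathbf U$, that is, all $n_i$ equal to a common $n\geq 2$. In this case we compare the top coefficients. Each factor of $\Sigma_1$ is $n!\prod x_u$. Each factor of $\Sigma_2$ is the coefficient of $\prod_{u}x_u$ in $e_1^{n-2}e_2$, times $\prod x_u$. That coefficient is $\binom n2(n-2)!=n!/2$, so the factors match after multiplying by $2^k$.

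What I expect to need care with is justifying that the small-$n_i$ boundary cases fit these formulas. These are the cases where $e_2$ vanishes or exponents become negative, and they arise because $m_i$ may be smaller than $2$.
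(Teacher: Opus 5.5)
Your strategy is the paper's. You induct on $|U|$ through the cyclic recurrence \eqref{eq:recursive_formula_cyclic_quiver}, factor the alternating sum over $T$ into per-vertex sums, kill any factor with $n_{i-1}<n_i$ by inclusion--exclusion, and match squarefree coefficients when all $n_i$ coincide. The differences are cosmetic. You deduce that all $n_i$ are equal from $m_i\geq n_i$ and $\sum_i m_i=\sum_i n_i$, where the paper uses the cyclic chain $n_1\geq\cdots\geq n_k\geq n_1$. You keep $2^k$ outside and count $n!/2$, where the paper puts $2e_2$ into each factor. Your inductive-step computation is correct whenever $\mathbf U\geq\mathbf 2$, since then every $m_i\geq 2$ and all per-vertex sums are polynomials.

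\textbf{The gap.} It lies in the boundary case you flagged but left open. Your base case consists only of transversals, so every $U$ with $\mathbf U>\mathbf 1$ but $\mathbf U\not\geq\mathbf 2$ must also go through the recurrence. For an index $i$ with $n_{i-1}=1$ you then have $m_i-2=-1$. The $i$-th factor of $\Sigma_2$ is a sum of rational functions, and your claim that the degree count kills it when $m_i<n_i$ is false. For instance, take $k=2$ and $\mathbf U=(1,2)$ with $U_1=\{u\}$, $U_2=\{v,w\}$. The only index with $m_i<n_i$ is $i=2$, and the second factor of $\Sigma_2$ equals $x_vx_w/(x_v+x_w)\neq 0$.

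\textbf{The fix.} The needed identity $\Sigma_1=2^k\Sigma_2$ still holds, for a different reason. The $(i-1)$-th factor of $\Sigma_2$ is identically zero: its only term has $T_{i-1}=U_{i-1}$ a singleton, so $e_2(T_{i-1})=0$ while $e_1(T_{i-1})\neq 0$. Also $\Sigma_1=0$, because the $n_j$ are not all equal, so some $j$ has $1\leq m_j<n_j$, and that polynomial factor vanishes by your argument. Alternatively, follow the paper and enlarge the base case to all $U$ with $\mathbf 1\leq\mathbf U$ and $\mathbf U\not\geq\mathbf 2$. There some $U_i$ is a singleton, so every representation on $U$ is eventually constant, and $P_U=w(\mathcal R(U))=\mathbf e_1(U)^{\mathbf U^-}=F(U)$ directly. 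The recurrence is then only invoked for $\mathbf U\geq\mathbf 2$, where your argument goes through verbatim.
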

\begin{proof}
    We induct on the cardinality $|U|$, for subsets $U\subseteq\overline V$ containing a transversal. For each subset $U$, the assertion becomes
    \begin{equation}\label{eq:inductive_step_cyclic_recursion}
        P_U(\mathbf x)=\mathbf e_1(U)^{\mathbf{U}^-}-2^k\mathbf e_1(U)^{\mathbf{U}^--\mathbf 2}\mathbf e_2(U)^\mathbf 1.
    \end{equation}
Write the right-hand side as $F(U):=\mathbf e_1(U)^{\mathbf{U}^-}-2^k\mathbf e_1(U)^{\mathbf{U}^--\mathbf 2}\mathbf e_2(U)^\mathbf 1$. Fix any $U\subseteq \overline V$ that contains a transversal and for each $i=1,\ldots,k$, set $|U_i|:=n_i$. Since we identify subscripts modulo $k$, recall that $n_0=n_k$. Assume that~\eqref{eq:inductive_step_cyclic_recursion} holds for all proper subsets $T\subseteq U$ with $\mathbf T\geq\mathbf 1$.
    
    First suppose $\mathbf U\not\geq\mathbf 2$. Then for some vertex $i\in Q_0$, $|U_i|=1$, so $e_2(U_i)=0$ and thus $\mathbf e_2(U)^\mathbf 1=0$. Also, all representations in $\Rep(Q,U)$ are eventually constant, so~\eqref{R_U(S,T)_weight_targets_2} gives
    \[P_U(\mathbf x)=w(\mathcal R(U))=\mathbf e_1(U)^{\mathbf{U}^-}=F(U).\]
   
    Now, suppose that $\mathbf U\geq\mathbf 2$. By~\eqref{eq:recursive_formula_cyclic_quiver}, we have
    \begin{equation}\label{eq:cyclic_recursion_first}
        P_U(\mathbf x)
        =\sum_{\substack{\mathbf T \geq\mathbf 1\\ T \subsetneq U}} (-1)^{|U\setminus T|-1} \left(\mathbf e_1(T)^{\mathbf{U}^-}-2^k\mathbf e_1(T)^{\mathbf{U}^--\mathbf 2}\mathbf e_2(T)^\mathbf 1\right).
    \end{equation}
    This gives $P_U(\mathbf x)$ as a sum over proper subsets $T\subsetneq U$ with $\mathbf T\geq \mathbf 1$. To simplify, we pass to a sum over all subsets of $U$. First,~\eqref{eq:cyclic_recursion_first}
    still holds when we remove the condition $\mathbf T\geq \mathbf 1$. Since $\mathbf U^--\mathbf 2\geq\mathbf 0$, the polynomial $\mathbf e_1(T)^{\mathbf{U}^--\mathbf 2}$ is defined even if $\mathbf T\not\geq \mathbf 1$. Furthermore, for such $\mathbf T$, the corresponding summand $\mathbf e_1(T)^{\mathbf{U}^-}-2^k\mathbf e_1(T)^{\mathbf{U}^--\mathbf 2}\mathbf e_2(T)^\mathbf 1$ is $0$, since $\mathbf e_1(T)^{\mathbf{U}^-}$ and $\mathbf e_2(T)^\mathbf 1$ are $0$. Next, we can subtract $F(U)$ from~\eqref{eq:cyclic_recursion_first} to get
    \begin{equation}\label{eq:cyclic_recursion_equals_zero}
        P_U(\mathbf x)-F(U)=\sum_{T \subseteq U} (-1)^{|U\setminus T|-1} \left(\mathbf e_1(T)^{\mathbf{U}^-}-2^k\mathbf e_1(T)^{\mathbf{U}^--\mathbf 2}\mathbf e_2(T)^\mathbf 1\right).
    \end{equation}
    It remains to prove that this difference vanishes.

For each $i=1,\ldots,k$ and $S\subseteq U_i$, define the polynomials
\[A_i(S):=(-1)^{n_i-|S|} e_1(S)^{n_{i-1}},\quad\text{and}\quad B_i(S):=(-1)^{n_i-|S|} e_1(S)^{n_{i-1}- 2}2e_2(S).\]
For each $T\subseteq U$, the terms in~\eqref{eq:cyclic_recursion_equals_zero} expand as
\[(-1)^{|U\setminus T|-1}\mathbf e_1(T)^{\mathbf{U}^-}=-\prod_{i=1}^kA_i(T_i),\quad (-1)^{|U\setminus T|-1}\big(-2^k\mathbf e_1(T)^{\mathbf{U}^--\mathbf 2}\mathbf e_2(T)^\mathbf 1\big)=\prod_{i=1}^kB_i(T_i).\]
Since $\sum_{T_1 \subseteq U_1}\cdots\sum_{T_k\subseteq U_k}\prod_{i=1}^kA_i(T_i)=\prod_{i=1}^k\sum_{T_i\subseteq U_i}A_i(T_i)$ (and an analogous statement holds for $B_i(T_i)$), we can rewrite~\eqref{eq:cyclic_recursion_equals_zero} as
\begin{equation*}
P_U(\mathbf x)-F(U)=-\prod_{i=1}^k\sum_{T_i\subseteq U_i}A_i(T_i)+\prod_{i=1}^k\sum_{T_i\subseteq U_i}B_i(T_i).
\end{equation*}
So, the result reduces to proving that $\prod_{i=1}^k\sum_{T_i\subseteq U_i}A_i(T_i)=\prod_{i=1}^k\sum_{T_i\subseteq U_i}B_i(T_i)$.

We fix $i=1,\ldots,k$ and consider the sum over the $A_i(T_i)$ terms first. For each $T_i\subseteq U_i$, we can expand $e_1(T_i)^{n_{i-1}}$ as a sum over functions from $[n_{i-1}]$ to $T_i$, $\sum_{\varphi:[n_{i-1}]\to T_i}\prod_{j=1}^{n_{i-1}} x_{\varphi(j)}$. After, we can swap the order of summation to get
\begin{equation}\label{eq:A_i(T_i)_function_sum}
\begin{aligned}
\sum_{T_i\subseteq U_i}A_i(T_i)&=\sum_{T_i\subseteq U_i}\Big((-1)^{n_i-|T_i|}\sum_{\varphi:[n_{i-1}]\to T_i}\prod_{j=1}^{n_{i-1}} x_{\varphi(j)}\Big)\\
&=\sum_{\varphi:[n_{i-1}]\to U_i}\sum_{\im\varphi \subseteq T_i\subseteq U_i}(-1)^{n_i-|T_i|}\prod_{j=1}^{n_{i-1}} x_{\varphi(j)}.
\end{aligned}
\end{equation}
Here $\im\varphi$ is the image of the map. For each function $\varphi$, we have
\[\sum_{\im\varphi \subseteq T_i\subseteq U_i}(-1)^{n_i-|T_i|}=
\begin{cases}
    1&\text{if }\im\varphi=U_i,\\
    0&\text{otherwise.}
\end{cases}\]
Thus, if any $n_{j-1}<n_j$, then $\prod_{j=1}^k\sum_{T_j\subseteq U_j}A_j(T_j)=0$. So, this product can be nonzero only if $n_1\geq n_2\geq\ldots\geq n_k\geq n_1$, or equivalently, if all cardinalities $n_1,\ldots,n_k$ are equal. If so, then~\eqref{eq:A_i(T_i)_function_sum} is a sum over the bijections $\varphi:[n_i]\to U_i$, which gives $\sum_{T_i\subseteq U_i}A_i(T_i)=n_i!\prod_{v\in U_i}x_v$. Thus, $\prod_{j=1}^k\sum_{T_j\subseteq U_j}A_j(T_j)=(n_1!)^k\prod_{v\in U}x_v$.

Now, we fix $i=1,\ldots,k$ again and do the same for the sum over the $B_i(T_i)$ terms,
\begin{equation}\label{eq:B_i(T_i)_function_sum}
\begin{aligned}
\sum_{T_i\subseteq U_i}B_i(T_i)&=\sum_{T_i\subseteq U_i}\Big((-1)^{n_i-|T_i|}\sum_{\varphi:[n_{i-1}-2]\to T_i}\prod_{j=1}^{n_{i-1}-2} x_{\varphi(j)}\sum_{\substack{u,w\in T_i\\u\neq w}}x_ux_w\Big)\\
&=\sum_{\varphi:[n_{i-1}-2]\to U_i}\sum_{\substack{u,w\in U_i\\u\neq w}}\sum_{\im\varphi\cup\{u,w\} \subseteq T_i\subseteq U_i}(-1)^{n_i-|T_i|}x_ux_w\prod_{j=1}^{n_{i-1}-2} x_{\varphi(j)}.
\end{aligned}
\end{equation}
As before, the alternating sum vanishes unless $\im\varphi\cup\{u,w\}=U_i$. If $n_{j-1}<n_j$ for some $j$, then $\prod_{j=1}^k\sum_{T_j\subseteq U_j}B_j(T_j)=0$. If instead all cardinalities $n_1,\ldots,n_k$ are equal, then~\eqref{eq:B_i(T_i)_function_sum} is a sum over all functions $\varphi:[n_{i}-2]\to U_i$ and pairs $u,\,w\in U_i$ such that $\im\varphi\cup\{u,w\}=U_i$. These maps $\varphi$ are precisely the injections from $[n_{i}-2]$, with $\{u,w\}=U_i\setminus\im\varphi$. In these cases, $x_ux_w\prod_{j=1}^{n_{i}-2} x_{\varphi(j)}=\prod_{v\in U_i}x_v$. There are $(n_i-2)!\binom{n_i}{2}$ ways to choose an injection from $[n_{i}-2]$ into $U_i$, and there are two ways to order the remaining elements $u,\,w$, so $\sum_{T_i\subseteq U_i}B_i(T_i)=n_i!\prod_{v\in U_i}x_v$. Thus, $\prod_{j=1}^k\sum_{T_j\subseteq U_j}B_j(T_j)=(n_1!)^k\prod_{v\in U}x_v$.

Therefore, $P_U(\mathbf x)=F(U)$, and the result follows by induction.
\end{proof}

\section{Partition algorithm for the Jordan quiver}
\label{sec:partition_algorithm_Jordan_quiver}

We shift from matrix enumerators to outline an algorithm inspired by~\cite{Moore56,Hop71,Rut00,WDMS20,ABHKMS12}.
This gives an alternative method to recursively compute the generating function over Jordan quivers.
Instead of removing sources from acyclic graphs, we record the successive partitions of a finite set $V$ induced by identifying vertices mapped to the same output under compositions of increasing length.

Fix a set $V$ of cardinality $n\geq 2$ and a positive integer $j\in\mathbb N$. 
Let $Q$ be the Jordan quiver with $j$ loops, which are labeled by $1,\ldots,j$.

\subsection{Successor states}
\label{subsection_partition_label_map}

We work with partitions and equivalence relations on a finite set $V$. Elements of a partition $\P$ are called \defn{blocks}, and the block containing $u\in V$ is denoted $[u]_\P$. Note that $u\in V$ is in a unique block in a fixed partition.
More generally, if a subset $U\subseteq V$ is contained within a block of $\P$, let $[U]_\P$ denote the block containing $U$. Partitions are ordered so that $\P\leq\P'$ if $\P$ is \defn{finer} than $\P'$, meaning each block of $\P$ is contained in a unique block of $\P'$. We also say that $\P'$ is \defn{coarser} than $\P$. The finest partition is the \defn{discrete partition} $\P_0:=\{\{u\} : u\in V\}$, and the coarsest partition is $\{V\}$.

Fix an element $c\in V$. A \defn{state at $c$} is a tuple $\mathsf S=(\P,\P',\Phi)$ comprised of partitions $\P\leq\P'$ and an injective map $\Phi:\P'\to\P^j$, such that $\Phi([c]_{\P'}) = ([c]_{\P},\ldots,[c]_{\P})$. For a block $U\in\P'$, we denote the entries of the $j$-tuple $\Phi(U)$ as $(\Phi(U)_1,\ldots,\Phi(U)_j)$.
Let $\mathcal S(c)$ denote the set of states at $c$, and consider two of its subsets. Let $\mathcal S_I(c)\subseteq\mathcal S(c)$ denote the subset of \defn{initial states} $(\P_0,\P,\Phi)$ that start with the discrete partition $\P_0$, and let $\mathcal S_T(c)\subseteq\mathcal S(c)$ denote the subset of \defn{terminal states} $(\P,\{V\},\Phi)$ that end with the coarsest partition $\{V\}$.

Given partitions $\P\leq\P'$, there is a canonical surjective quotient map $\P\twoheadrightarrow\P'$ sending each block $U\in\P$ to $[U]_{\P'}$, the block in $\P'$ containing $U$. This map is surjective since each block of $\P'$ is a union of blocks of $\P$, and this is a quotient map since $\P'$ is obtained from $\P$ by identifying the blocks of $\P$ that lie in the same block of $\P'$. 
This induces a map on $j$-tuples $\P^j\to(\P')^j$. So if we have a state $\mathsf S=(\P,\P',\Phi)\in\mathcal S(c)$, the map $\Phi:\P'\to\P^j$ passes to a map $\overline\Phi:\P'\to(\P')^j$ by the composition
\[\overline\Phi(U):=([\Phi(U)_{1}]_{\P'},\ldots,[\Phi(U)_{j}]_{\P'}),\]
where $\Phi(U)_i\in\P$ is the $i$-th entry of $\Phi(U)$.
Next, we define a partition $\P''$ and map $\Phi':\P''\to(\P')^j$ so that we can form the \defn{successor} of $\mathsf S$, $\mathsf S'\in\mathcal S(c)$, where $\mathsf S':=(\P',\P'',\Phi')$. 
Construct $\P''$ by identifying elements $u$ and $v\in V$ if and only if
$\overline\Phi([u]_{\P'})=\overline\Phi([v]_{\P'})$. Equivalently, $\P''$ is induced from the fibers of the map
$u\mapsto\overline\Phi([u]_{\P'})$.
Then, define $\Phi':\P''\to(\P')^j$ so that for each $u\in V$,
\[\Phi'([u]_{\P''}):=\overline\Phi([u]_{\P'})=([\Phi([u]_{\P'})_{1}]_{\P'},\ldots,[\Phi([u]_{\P'})_{j}]_{\P'}).\]
The function is well-defined and injective since $[U]_{\P''}=[T]_{\P''}$ if and only if $\overline\Phi(U)=\overline\Phi(T)$. Also $\Phi'$ preserves $c$, since
\[\Phi'([c]_{\P''})=([[c]_\P]_{\P'},\ldots,[[c]_\P]_{\P'})=([c]_{\P'},\ldots,[c]_{\P'}).\]

\subsection{Recursive algorithm}

Fix an element $c\in V$ and let $\mathcal M(c)$ be the subset of representations $W\in\Rep(Q,V)$ whose maps $W_1,\ldots,W_j$ fix $c$.

Each representation $W\in\mathcal M(c)$ induces an initial state, $\mathsf S_1(W):=(\P_0,\P_1,\Phi_1)$. Form $\P_1$ by identifying vertices $u,v\in V$ whenever $W_i(u)=W_i(v)$ for each map $W_1,\ldots,W_j$, and define the map $\Phi_1:\P_1\to\P_0^j$ by $\Phi_1([u]_1):=(\{W_1(u)\},\ldots,\{W_j(u)\})$. Then $\Phi_1$ is injective by construction of $\P_1$, and since the maps of $W$ fix $c$, we have $\Phi_1([c]_1)=(\{c\},\ldots,\{c\})$.

The initial state $\mathsf S_1(W)$ induces a sequence of states at $c$ given by the successor relation, $(\mathsf S_m(W))_{m=1}^{n-1}$. For each $m < n-1$, define $\mathsf S_{m+1}(W):=\mathsf S_m(W)'$. We write $\mathsf S_m(W):=(\P_{m-1},\P_{m},\Phi_{m})$ and $[u]_m:=[u]_{\P_m}$.
By induction, we see that each partition $\P_m$ identifies vertices $u,v\in V$ if and only if $W_p(u)=W_p(v)$ for all paths $p=i_m\cdots i_1$ of length $m$. Also for each $u\in V$,
\[\Phi_{m}([u]_{m})=([W_1(u)]_{m-1},\ldots,[W_j(u)]_{m-1}).\]
Note that if $\P_{m-1}=\P_{m}$, then the sequence of partitions stabilizes since $\P_{m+1}=\P_{m}$; if $[u]_{m+1}=[v]_{m+1}$, then $[W_i(u)]_{m}=[W_i(v)]_{m}$ for each $i=1,\ldots,j$, so $[W_i(u)]_{m-1}=[W_i(v)]_{m-1}$ for each $i=1,\ldots,j$, so $[u]_{m}=[v]_{m}$.

The representation $W\in\mathcal M(c)$ is eventually constant if and only if the final state $\mathsf S_{n-1}(W)$ is terminal.
If $W\in\EC$, then all vertices $u$ are identified in $\P_{n-1}$ since for all $(n-1)$-length paths $p=i_{n-1}\cdots i_1$, $W_p(u)=c$. So, $\P_{n-1}=\{V\}$. If $W\not\in\EC$, then there is a path $p=i_{n-1}\cdots i_1$ for which $W_p$ is not the constant map at $c$. Thus for some $u\in V$, we have $W_p(u)\neq c$ and $W_p(c)=c$, so $[u]_{n-1}\neq[c]_{n-1}$. Consequently, $\P_{n-1}\neq\{V\}$, so $\mathsf S_{n-1}(W)$ is not terminal.

Next, note that the map $W\mapsto\mathsf S_1(W)$ from $\mathcal M(c)\to\mathcal S_I(c)$ is a bijection. For the inverse map, consider an initial state $(\P_0,\P,\Phi)\in\mathcal S_I(c)$. Since $\P_0$ is the discrete partition, we can define endomorphisms $W_1,\ldots,W_j\in\End(V)$ by $(\{W_1(u)\},\ldots,\{W_j(u)\}):=\Phi([u]_\P)$.
Each map $W_i$ fixes $c$, since $(\{W_1(c)\},\ldots,\{W_j(c)\})=\Phi([c]_\P)=(\{c\},\ldots,\{c\}).$

Now, we turn to target specialized weights and recursively construct maps
$\mu_1,\ldots,\mu_{n-1}$ that assign weights to states at $c$.
For each initial state $\mathsf S\in\mathcal S_I(c)$, let
\[\mu_1(\mathsf S):=w(W)=\prod_{i=1}^j\prod_{u\in V}x_{W_i(u)}\in R_V,\]
for $W$ the unique representation in $\mathcal M(c)$ corresponding to $\mathsf S$. Set $\mu_1$ to $0$ elsewhere.
Given the assignment $\mu_m$, define $\mu_{m+1}:\mathcal S(c)\to R_V$. For each state $\mathsf S$, let
\[\mu_{m+1}(\mathsf S):=\sum_{\substack{\mathsf T:\mathsf T'=\mathsf S}}\mu_m(\mathsf T).\]
By induction, the weight assignments admit another characterization.
For each $m\leq n-1$ and each state $\mathsf S\in\mathcal S(c)$, $\mu_m(\mathsf S)$ reduces to the weight
\[\mu_{m}(\mathsf S)=\sum_{\substack{W:\mathsf S_m(W)=\mathsf S}}w(W),\]
the weighted sum over all representations $W$ whose $m$-th successor state is $\mathsf S$.

Recall that $W\in\mathcal M(c)$ is eventually constant if and only if $\mathsf S_{n-1}(W)$ is a terminal state in $\mathcal S_T(c)$. Therefore, the weight of the eventually constant representations that fix $c$ is the following sum over the terminal states
\[\sum_{\mathsf S\in\mathcal S_T(c)}\mu_{n-1}(\mathsf S)=\sum_{W\in\EC\cap\mathcal M(c)}w(W).\]
Summing over all elements $c\in V$ then yields the generating function,
\[P(\mathbf x)=\sum_{c\in V}\sum_{\mathsf S\in\mathcal S_T(c)}\mu_{n-1}(\mathsf S).\]

\bibliographystyle{amsalpha} 
\bibliography{nilpotent_finite}

@article {Lei21,
    AUTHOR = {Leinster, Tom},
     TITLE = {The probability that an operator is nilpotent},
   JOURNAL = {Amer. Math. Monthly},
  FJOURNAL = {American Mathematical Monthly},
    VOLUME = {128},
      YEAR = {2021},
    NUMBER = {4},
     PAGES = {371--375},
}

@article {CIKLR25,
    AUTHOR = {Chen, Weixi and Im, Mee Seong and Khovanov, Mikhail and Lillja, Catherine and Rugo, Nicolas},
     TITLE = {Pairs of eventually constant maps and nilpotent pairs},
   JOURNAL = {arXiv preprint \href{https://arxiv.org/abs/2512.03367}{arXiv:2512.03367}, to appear in Lett. Math. Phys.},
      YEAR = {2025},
     PAGES = {1--15},
}

@article {CILR25,
    AUTHOR = {Chen, Weixi and Im, Mee Seong and Lillja, Catherine and Rugo, Nicolas},
     TITLE = {Eventually constant maps for two sets and nilpotent pairs},
   JOURNAL = {arXiv preprint \href{https://arxiv.org/abs/2512.05269}{arXiv:2512.05269}, to appear in Contemp. Math.},
      YEAR = {2025},
     PAGES = {1--21},
}

@incollection {Moore56,
    AUTHOR = {Moore, Edward F.},
     TITLE = {Gedanken-experiments on sequential machines},
 BOOKTITLE = {Automata studies},
    SERIES = {Ann. of Math. Stud.},
    VOLUME = {no. 34},
     PAGES = {129--153},
 PUBLISHER = {Princeton Univ. Press, Princeton, NJ},
      YEAR = {1956},
}

@article {GHI26,
    AUTHOR = {Green, Radford and Holmes, Cornell and Im, Mee Seong},
     TITLE = {Multisymmetric functions on eventually constant cyclic graphs},
   JOURNAL = {arXiv preprint \href{https://arxiv.org/abs/2604.16255}{arXiv:2604.16255}},
      YEAR = {2026},
     PAGES = {1--26},
}

@article {KR,
    AUTHOR = {Khovanov, Mikhail and Robert, Louis-Hadrien},
     TITLE = {Foam evaluation and {K}ronheimer-{M}rowka theories},
   JOURNAL = {Adv. Math.},
  FJOURNAL = {Advances in Mathematics},
    VOLUME = {376},
      YEAR = {2021},
     PAGES = {Paper No. 107433, 59},
}

@incollection {Q,
    AUTHOR = {Quillen, Daniel},
     TITLE = {Higher algebraic {$K$}-theory. {I}},
 BOOKTITLE = {Algebraic {$K$}-theory, {I}: {H}igher {$K$}-theories ({P}roc.
              {C}onf., {B}attelle {M}emorial {I}nst., {S}eattle, {W}ash.,
              1972)},
    SERIES = {Lecture Notes in Math.},
    VOLUME = {Vol. 341},
     PAGES = {85--147},
 PUBLISHER = {Springer, Berlin-New York},
      YEAR = {1973},
}

@article {CK78,
    AUTHOR = {Chaiken, Seth and Kleitman, Daniel J.},
     TITLE = {Matrix tree theorems},
   JOURNAL = {J. Combinatorial Theory Ser. A},
  FJOURNAL = {Journal of Combinatorial Theory. Series A},
    VOLUME = {24},
      YEAR = {1978},
    NUMBER = {3},
     PAGES = {377--381},
      ISSN = {0097-3165},
}

@book {St12,
    AUTHOR = {Stanley, Richard P.},
     TITLE = {Enumerative combinatorics. {V}olume 1},
    SERIES = {Cambridge Studies in Advanced Mathematics},
    VOLUME = {49},
   EDITION = {Second},
 PUBLISHER = {Cambridge University Press, Cambridge},
      YEAR = {2012},
     PAGES = {299--306},
}

@incollection {Rb73,
    AUTHOR = {Robinson, Robert W.},
     TITLE = {Counting labeled acyclic digraphs},
 BOOKTITLE = {New directions in the theory of graphs ({P}roc. {T}hird {A}nn
              {A}rbor {C}onf., {U}niv. {M}ichigan, {A}nn {A}rbor, {M}ich.,
              1971)},
     PAGES = {239--273},
 PUBLISHER = {Academic Press, New York-London},
      YEAR = {1973},
}

@article {Gs96,
    AUTHOR = {Gessel, Ira M.},
     TITLE = {Counting acyclic digraphs by sources and sinks},
   JOURNAL = {Discrete Math.},
  FJOURNAL = {Discrete Mathematics},
    VOLUME = {160},
      YEAR = {1996},
    NUMBER = {1-3},
     PAGES = {253--258},
}

@article {Ls06,
    AUTHOR = {Liskovets, Valery A.},
     TITLE = {Exact enumeration of acyclic deterministic automata},
   JOURNAL = {Discrete Appl. Math.},
  FJOURNAL = {Discrete Applied Mathematics. The Journal of Combinatorial
              Algorithms, Informatics and Computational Sciences},
    VOLUME = {154},
      YEAR = {2006},
    NUMBER = {3},
     PAGES = {537--551},
}

@article {Rt64,
    AUTHOR = {Rota, Gian-Carlo},
     TITLE = {On the foundations of combinatorial theory. {I}. {T}heory of
              {M}\"obius functions},
   JOURNAL = {Z. Wahrscheinlichkeitstheorie und Verw. Gebiete},
  FJOURNAL = {Zeitschrift f\"ur Wahrscheinlichkeitstheorie und Verwandte
              Gebiete},
    VOLUME = {2},
      YEAR = {1964},
     PAGES = {340--368},
}

@article {DW05,
    AUTHOR = {Derksen, Harm and Weyman, Jerzy},
     TITLE = {Quiver representations},
   JOURNAL = {Notices Amer. Math. Soc.},
  FJOURNAL = {Notices of the American Mathematical Society},
    VOLUME = {52},
      YEAR = {2005},
    NUMBER = {2},
     PAGES = {200--206},
}

@book {DW17,
    AUTHOR = {Derksen, Harm and Weyman, Jerzy},
     TITLE = {An introduction to quiver representations},
    SERIES = {Graduate Studies in Mathematics},
    VOLUME = {184},
 PUBLISHER = {American Mathematical Society, Providence, RI},
      YEAR = {2017},
     PAGES = {x+334},
}

@article {Ls91,
    AUTHOR = {Lusztig, George},
     TITLE = {Quivers, perverse sheaves, and quantized enveloping algebras},
   JOURNAL = {J. Amer. Math. Soc.},
  FJOURNAL = {Journal of the American Mathematical Society},
    VOLUME = {4},
      YEAR = {1991},
    NUMBER = {2},
     PAGES = {365--421},
}

@incollection {Hop71,
    AUTHOR = {Hopcroft, John},
     TITLE = {An {$n$} log {$n$} algorithm for minimizing states in a finite
              automaton},
 BOOKTITLE = {Theory of machines and computations ({P}roc. {I}nternat.
              {S}ympos., {T}echnion, {H}aifa, 1971)},
     PAGES = {189--196},
 PUBLISHER = {Academic Press, New York-London},
      YEAR = {1971},
}

@incollection {Rut00,
    AUTHOR = {Rutten, Jan J. M. M.},
     TITLE = {Universal coalgebra: a theory of systems},
      NOTE = {Modern algebra and its applications (Nashville, TN, 1996)},
   JOURNAL = {Theoret. Comput. Sci.},
  FJOURNAL = {Theoretical Computer Science},
  SERIES = {Selected papers from the Computer Science Sessions of the International Conference held at Vanderbilt University, Nashville, TN, May 1996},
BOOKTITLE = {Theoretical Computer Science},
PUBLISHER = {American Elsevier Publishing Company},
    VOLUME = {249},
      YEAR = {2000},
    NUMBER = {1},
     PAGES = {3--80},
}

@article {WDMS20,
    AUTHOR = {Wissmann, Thorsten and Dorsch, Ulrich and Milius, Stefan and
              Schr\"oder, Lutz},
     TITLE = {Efficient and modular coalgebraic partition refinement},
   JOURNAL = {Log. Methods Comput. Sci.},
  FJOURNAL = {Logical Methods in Computer Science},
    VOLUME = {16},
      YEAR = {2020},
    NUMBER = {1},
     PAGES = {Paper No. 8, 63},
}

@incollection {ABHKMS12,
    AUTHOR = {Ad\'amek, Ji\v r\'i{} and Bonchi, Filippo and H\"ulsbusch,
              Mathias and K\"onig, Barbara and Milius, Stefan and Silva,
              Alexandra},
     TITLE = {A coalgebraic perspective on minimization and determinization},
 BOOKTITLE = {Foundations of software science and computational structures},
    SERIES = {Lecture Notes in Comput. Sci.},
    VOLUME = {7213},
     PAGES = {58--73},
 PUBLISHER = {Springer, Heidelberg},
      YEAR = {2012},
}


\end{document}